\documentclass[a4paper,12pt,reqno]{amsart}
\usepackage[margin=1in]{geometry}
\usepackage[T1]{fontenc}
\usepackage{lmodern}
\usepackage{amsmath,amssymb,mathtools}
\usepackage[hyphens]{url}
\usepackage[colorlinks=true,linkcolor=blue,citecolor=blue,urlcolor=blue]{hyperref}
\usepackage{leftidx}
\usepackage{tikz-cd}
\usepackage{amsthm}

\allowdisplaybreaks
\usepackage{bbm}
\DeclareMathOperator{\GL}{GL}

\DeclareMathOperator{\E}{E}
\DeclareMathOperator{\F}{F}

\DeclareMathOperator{\K}{K}
\DeclareMathOperator{\U}{U}

\DeclareMathOperator{\J}{J}

\DeclareMathOperator{\W}{W}

\DeclareMathOperator{\e}{e}
\renewcommand\v{\operatorname{v}}
\renewcommand\k{\operatorname{k}}
\renewcommand\L{\operatorname{L}}
\DeclareMathOperator{\tr}{tr}
\DeclareMathOperator{\Mat}{Mat}
\DeclareMathOperator{\Ind}{Ind}
\DeclareMathOperator{\ind}{ind}

\DeclareMathOperator{\val}{val}
\DeclareMathOperator{\Gal}{Gal}

\newcommand{\sfrac}{\genfrac{}{}{}1}

\newcommand{\bigslant}[2]{%
  \raisebox{.2em}{$#1$}\left/\raisebox{-.2em}{$#2$}\right.%
}

\newcommand{\bigdoublecoset}[3]{%
  \raisebox{-.2em}{$#1$}
  \left\backslash
  \raisebox{.2em}{$#2$}
  \middle/
  \raisebox{-.2em}{$#3$}
  \right.%
}

\newcommand{\smalldoublecoset}[3]{%
  \raisebox{-.08em}{$\scriptstyle #1$}\!
  \backslash\!
  \raisebox{.08em}{$\scriptstyle #2$}\!
  /\,%
  \raisebox{-.08em}{$\scriptstyle #3$}%
}

\theoremstyle{plain}
\numberwithin{equation}{section}
\newtheorem{theorem}[equation]{Theorem}
\newtheorem*{theorem*}{Theorem}
\newtheorem{lemma}[equation]{Lemma}

\newtheorem{definition}[equation]{Definition}
\newtheorem{proposition}[equation]{Proposition}

\newtheorem{remark}[equation]{Remark}

\title{L\lowercase{ocal} C\lowercase{onverse} T\lowercase{heorems} \lowercase{for} $k$-M\lowercase{iddle} S\lowercase{upercuspidal} R\lowercase{epresentations} \lowercase{of} $p$\lowercase{-adic} $\GL(kn)$}

\author{M\lowercase{oshe} A\lowercase{drian}, M\lowercase{ichael} H\lowercase{orzepa}, \lowercase{and} D\lowercase{avid} C.~L\lowercase{uo}}

\address{Department of Mathematics, Queens College, City University of New York, Flushing, NY 11367, United States}
\email{Moshe.Adrian@qc.cuny.edu}

\address{Mathematics Program, The Graduate Center, City University of New York, New York, NY 10016, United States}
\email{Michael.Horzepa20@gc.cuny.edu}

\address{School of Mathematics, University of Minnesota, Minneapolis, MN 55455, United States}
\email{luo00275@umn.edu}

\subjclass[2020]{22E50, 11F70}
\keywords{local converse theorem, supercuspidal representation, admissible pair, twisted gamma factor}

\begin{document}

\begin{abstract}
Let $\F$ be a non-archimedean local field of characteristic zero with residual characteristic $p$.~In this paper, we study local converse theorems for $k$-middle supercuspidal representations of $\GL(kn, \F)$ via explicit computations of twisted gamma factors on the Galois side, under the assumptions $p \nmid kn$ and $q_{\F}>(k-1)^2$.~Our results provide further evidence for a conjecture of the third-named author and Stevens on depth bounds for the twisting representations in the local converse theorem.~For odd $k \ge 3$, we obtain infinitely many families of supercuspidal representations for which the bound $\left\lfloor \frac{kn}{2} \right\rfloor$ in the local converse theorem is not sharp.
\end{abstract}

\maketitle
\pagestyle{plain}

\section{Introduction}

Let $\F$ be a non-archimedean local field of characteristic zero and residual characteristic $p$, and fix a non-trivial additive quasi-character $\psi_{\F}$ of $\F$.~For irreducible generic representations $\pi$ of $\GL(n,\F)$ and $\tau$ of $\GL(r,\F)$, one may attach an invariant called the \textit{twisted gamma factor} $\gamma(s,\pi\times\tau,\psi_{\F})$ which is a function of the complex variable $s$.~This invariant may be defined via Rankin--Selberg convolution integrals \cite{JPSS} or the Langlands--Shahidi method \cite{Shahidi}.

Fixing $\pi$ and letting $\tau$ range throughout all irreducible generic representations of $\GL(r, \F)$ with $r \geq 1$, one may completely determine $\pi$ via its twisted gamma factors \cite{Chai, JacquetLiu}; indeed, there is an integer~$M_{\pi}$ such that it is enough to consider~$1\le r\le M_{\pi}$, and we call the least such~$M_{\pi}$ the number of "twists" required to characterize~$\pi$.~The question of finding the minimal number of twists needed has a long and rich history (for more information on this, see \cite{ALST}).~It was conjectured that $M_{\pi} \le \left\lfloor \frac{n}{2} \right\rfloor$; this conjecture is commonly referred to as Jacquet's conjecture or the local converse problem for $\GL(n, \F)$ \cite{Jiang}.~In 2018, Jacquet and Liu \cite{JacquetLiu} and, independently, Chai \cite{Chai} proved Jacquet’s conjecture using different methods. 
\begin{theorem*}[Local converse theorem]
Let $n \geq 2$, and let $\pi_1$ and $\pi_2$ be supercuspidal representations of $\GL(n, \, \F)$.~If
\[
\gamma(s, \, \pi_1\times\tau, \, \psi_{\F}) = \gamma(s, \, \pi_2\times\tau, \, \psi_{\F})
\]
as functions of the complex variable $s$, for all supercuspidal representations $\tau$ of $\GL(r, \, \F)$ with
\[
1\leq r\leq\left\lfloor\frac{n}{2}\right\rfloor,
\]
then $\pi_1\cong\pi_2$.
\end{theorem*}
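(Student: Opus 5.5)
This is the Jacquet--Liu/Chai theorem, and I would follow the Jacquet--Liu strategy through Whittaker models and partial Bessel functions.

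\textbf{Step 1: Bessel functions and Kirillov models.} Write $N_n$ for the upper triangular unipotent subgroup, $P_n$ for the mirabolic subgroup, $B_n$ for the Borel subgroup, $A_n$ for the diagonal torus, and $Z_n$ for the centre. Twisted gamma factors depend only on central characters and Whittaker models. Fix Whittaker vectors $W_i\in\mathcal{W}(\pi_i,\psi_{\F})$ whose restrictions to $P_n$ are equal and compactly supported modulo $N_n$; this is possible since both Kirillov models contain $C_c^\infty(N_n\backslash P_n,\psi_{\F})$. Because twisting by characters ($r=1$) already identifies the central characters, we may regard $\omega_{\pi_1}=\omega_{\pi_2}$ as known. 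From such $W_i$ one builds partial Bessel functions $\mathcal{B}_i(g)$, defined on the big cell and on the Bruhat cells $B_n w Z_n N_n$. The claim $\pi_1\cong\pi_2$ then reduces to $\mathcal{B}_1=\mathcal{B}_2$ on all relevant cells, and it suffices to reach the long Weyl element $w_n$.

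\textbf{Step 2: Cells of $w_{n-r}$ type and the functional equation.} For $1\le r\le n-1$, let $w_{n-r}$ be the Weyl element $\begin{pmatrix}0&I_{n-r}\\ I_r&0\end{pmatrix}$. Write down the JPSS local functional equation for $\pi_i\times\tau$ with $\tau$ supercuspidal of $\GL(r)$. Insert the chosen $W_i$ and an arbitrary $W'\in\mathcal{W}(\tau,\psi_{\F}^{-1})$. Unfolding the dual side shows that, if the gamma factors agree for all such $\tau$, then the difference $\mathcal{B}_1-\mathcal{B}_2$ on cells ``of type $w_{n-r}$'' pairs to zero against every supercuspidal Whittaker function of $\GL(r)$. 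Because $\tau$ ranges over all supercuspidals and the functions involved have compact support modulo the centre, a density or spectral argument then forces vanishing. More precisely, generic irreducible $\tau$ are controlled by supercuspidals through Bernstein--Zelevinsky theory, and Jacquet--Liu handle the full Whittaker space using a lemma on compactly supported functions. The output is $\mathcal{B}_1=\mathcal{B}_2$ on $B_n w_{n-r} B_n$ for each $r\le\lfloor n/2\rfloor$.

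\textbf{Step 3: Symmetry and the final step.} Apply the contragredient involution $g\mapsto w_n\,{}^tg^{-1}w_n$. It exchanges the role of $w_{n-r}$ with $w_r$ and converts the gamma factors of $\pi_i$ into those of $\tilde\pi_i$, which agree by the functional equation relating $\gamma(s,\pi\times\tau)$ and $\gamma(1-s,\tilde\pi\times\tilde\tau)$. Together with Step 2 this covers all $r\le n-1$. By the Cogdell--Shahidi--Tsai support analysis, agreement on the cells attached to all $w_{n-r}$, combined with the ``$r=0$'' agreement coming from the Kirillov model, propagates to the top cell of $w_n$. Hence $W_1$ and $W_2$, after suitable translation, coincide on a dense open subset. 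This implies that $\mathcal{W}(\pi_1)\cap\mathcal{W}(\pi_2)\ne0$, and uniqueness of Whittaker models then gives $\pi_1\cong\pi_2$.

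The \textbf{main obstacle} is Step 2. One must control the asymptotics of the partial Bessel functions near the boundary of the $w_{n-r}$ cells, and prove that vanishing against supercuspidal $\tau$ alone, rather than all generic $\tau$, suffices. For the latter I would first try the Jacquet--Liu argument: the relevant integrand is a cuspidal function on $\GL(r)$, so it pairs only with cuspidal spectra.
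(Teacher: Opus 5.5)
The paper gives no proof of this statement. It quotes it from Jacquet--Liu and Chai \cite{JacquetLiu, Chai}, so your outline has to stand on its own. There is a genuine gap exactly where you place the main obstacle.

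In Step~2 you go from ``the difference pairs to zero with the Whittaker functions of every \emph{supercuspidal} $\tau$ of $\GL(r,\F)$'' to vanishing, and you propose to justify this because the integrand is ``cuspidal''. It is not cuspidal. It is left $(N_r,\psi_{\F})$-equivariant and at best compactly supported modulo $N_r$ (not modulo the centre), so it lies in the Gelfand--Graev space $C_c^\infty(N_r\backslash\GL(r,\F),\psi_{\F})$. For fixed $s$, the map $f\mapsto\int_{N_r\backslash\GL(r,\F)}f(g)W'(g)|\det g|^{s}\,dg$ is a $\GL(r,\F)$-invariant pairing. It is therefore a $\GL(r,\F)$-map $\ind_{N_r}^{\GL(r,\F)}\psi_{\F}\to(\tau|\det|^{s})^{\vee}$, and by the Bernstein decomposition it kills every block except the cuspidal block of $(\tau|\det|^{s})^{\vee}$. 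For $r\ge 2$ the non-cuspidal blocks of the Gelfand--Graev representation (e.g.\ the Iwahori block) are nonzero. So orthogonality to all supercuspidal Whittaker functions forces nothing, and the functions produced by the functional equation have no reason to lie in cuspidal blocks.

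The repair is formal, not spectral. Let $\tau$ be irreducible generic with supercuspidal support $\rho_1|\det|^{a_1},\dots,\rho_t|\det|^{a_t}$, where $\rho_j$ lives on $\GL(r_j,\F)$ with $r_j\le r$. Multiplicativity of Rankin--Selberg gamma factors \cite{JPSS} gives $\gamma(s,\pi_i\times\tau,\psi_{\F})=\prod_j\gamma(s+a_j,\pi_i\times\rho_j,\psi_{\F})$. Hence your hypothesis upgrades to all irreducible generic twists of rank at most $\lfloor n/2\rfloor$. That is exactly the input needed by the vanishing lemma Jacquet--Liu use: a function in $C_c^\infty(N_r\backslash\GL(r,\F),\psi_{\F})$ orthogonal to the Whittaker models of all irreducible generic $\tau$ is zero.

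Two further steps are asserted rather than argued.
\begin{enumerate}
\item That $\GL(1)$-twists fix the central character is true, but it needs the Jacquet--Shalika stability of $\gamma(s,\pi\times\chi,\psi_{\F})$ for highly ramified $\chi$.
\item More seriously, Step~3 is where the whole improvement to $\lfloor n/2\rfloor$ lives. As used by Cogdell--Shahidi--Tsai, the partial Bessel function support analysis yields only Henniart's bound $n-1$. Already for $n=3$, where only $\GL(1)$-twists are available, your ``propagation to the open cell'' is the content of Chen's $n\times(n-2)$ theorem. The claim that information attached to the maximal-Levi elements $w_{n-r}$ reaches the open cell of $w_n$ is not a consequence of that analysis; it is the heart of the matter. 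What Chai and Jacquet--Liu supply is a specific mechanism by which the information coming from $\pi_i$ and from $\tilde\pi_i$ through the functional equation meets in the middle. Naming the involution $g\mapsto w_n\,{}^{t}g^{-1}w_n$ does not provide that mechanism, so as written Step~3 cites the theorem rather than proving it.
\end{enumerate}
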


Following the proof of the local converse theorem, the sharpness of the bound $\left\lfloor \frac{n}{2} \right\rfloor$ has been a topic of much curiosity.~While the bound $\left\lfloor \frac{n}{2} \right\rfloor$ is essentially sharp in general \cite{A23, ALST}, there do exist cases in which it is not.~For depth-zero supercuspidals $\pi$, Nien, Zhang, and Yun give precise criteria for when single twists are sufficient, i.e.~$M_{\pi}=1$ \cite{NZ}.~The \textit{simple (epipelagic) supercuspidals}, namely, those of minimal positive depth, also satisfy this single twisting condition \cite[Remark 3.18]{Adrian-Liu}, \cite[Proposition 2.2]{BH-1}, and \cite[Theorem 1.1]{Xu}.

In this paper, we consider a class of depth $\frac{1}{n}$ supercuspidal representations of $\GL(kn, \, \F)$, which we call \textit{$k$-middle supercuspidal representations}.~Here $k$, $n \geq 2$, and we assume that $p \nmid kn$.~These are the supercuspidal representations of depth $\frac{1}{n}$ such that a field extension $\E/\F$ associated to the type from which they are induced has ramification index $n$ and residue degree $k$.~Alternatively, they are the depth $\frac{1}{n}$ supercuspidal representations of $\GL(kn, \, \F)$ which are \textit{minimax} in the sense of \cite{Adrian}.~On the Galois side of the local Langlands correspondence, their Langlands parameters have multiplicity-free restriction to wild inertia and twisting number $k$, i.e.~there are precisely $k$ unramified quasi-characters of $\F^{\times}$ which leave the parameter invariant by twisting.

These representations generalize middle supercuspidals.~Indeed, when $k=2$, the construction recovers the middle supercuspidal representations of $\GL(2n,\F)$ introduced by the third-named author and Stevens \cite{Luo}.~The construction for general $k$ is given in Subsection~\ref{Middle} and does not require the tameness assumption $p\nmid kn$.

Via explicit computations of twisted gamma factors on the Galois side, we show that a $k$-middle supercuspidal representation is determined, among $k$-middle supercuspidals with the same central character, by its twisted gamma factors with tamely ramified quasi-characters of $\F^{\times}$ and simple supercuspidal representations of $\GL(n, \, \F)$, under the additional assumption $q_{\F}>(k-1)^2$.~The first main result we obtain is the following.

\begin{theorem}\label{MainResult1}
Suppose that $p\nmid kn$ and $q_{\F}>(k-1)^2$.~Let $\pi_1$ and $\pi_2$ be $k$-middle supercuspidal representations of $\GL(kn, \, \F)$ with the same central character.~If
\[
\gamma(s, \, \pi_1\times\tau, \, \psi_{\F}) = \gamma(s, \, \pi_2\times\tau, \, \psi_{\F})
\]
as functions of the complex variable $s$ for every $\tau$ that is either a tamely ramified quasi-character of $\F^{\times}$ or a simple supercuspidal representation of $\GL(n, \, \F)$, then $\pi_1\cong\pi_2$.
\end{theorem}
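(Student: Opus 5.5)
We argue on the Galois side. Since $p\nmid kn$, each $k$-middle supercuspidal $\pi_i$ corresponds to an admissible pair $(\E/\F,\xi_i)$, where $\E/\F$ is tamely ramified with ramification index $n$ and residue degree $k$. The Langlands parameter is $\sigma_i=\Ind_{\W_\E}^{\W_\F}(\xi_i\Delta_i)$, where $\Delta_i$ is the tamely ramified rectifier. A simple supercuspidal $\tau$ of $\GL(n,\F)$ likewise has parameter $\Ind_{\W_\L}^{\W_\F}\chi$, with $\L/\F$ totally ramified of degree $n$ and $\chi$ of level one. The twisted gamma factor is the Deligne--Langlands factor $\gamma(s,\sigma_i\otimes\sigma_\tau,\psi_\F)$. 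Using Mackey theory and inductivity in degree zero, we reduce it to a product of abelian epsilon factors $\varepsilon(s,\xi_i\Delta_i\cdot\chi^{g}|_{\E\L^g},\psi)$, indexed by double cosets $\W_\E\backslash\W_\F/\W_\L$, up to Langlands constants and a factor depending only on $\E,\L$.

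\textbf{Step 1: tame quasi-characters.} Twisting by a tamely ramified $\tau$ changes $\xi_i$ by the tame character $\tau\circ N_{\E/\F}$. Since $\xi_i$ has level $1$ in $\E$ (depth $1/n$), the epsilon factor $\varepsilon(\xi_i\cdot\tau_\E,\psi_\E)$ is a Gauss-sum type expression. It depends on the restriction of $\xi_i$ to $U^1_\E/U^2_\E$, encoded by an element $\alpha_i\in \E$ of valuation $-1$, and on the values of $\xi_i$ on $\mu_\E$ and on a uniformizer. Equality for all tame $\tau$, combined with equal central characters, should let us match these data. We would first use the depth of twists: as $\tau$ ranges over unramified characters, the $\varepsilon$-factor is a monomial in $q^{-s}$ whose coefficient is $\xi_i(\varpi_\E\text{-part})$. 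Then we use Fourier inversion on the tame characters of $\mu_\F$ to recover the $\mu_\F$-part. The conclusion is that $\alpha_1$ and $\alpha_2$ agree up to the data that tame twists cannot see: the residual class $\bar\alpha_i=\varpi_\E\alpha_i$ mod $\mathfrak p_\E$ is determined only through its norm to $k_\F$ and the relevant Frobenius orbit.

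\textbf{Step 2: simple supercuspidal twists.} This is the main step. For $\tau$ simple with parameter $\chi$ on $\L^\times$, the compositum $\E\L^g$ has ramification $n$ and residue degree $k$ for suitable choices. The tensor-product epsilon factor becomes a sum over $k$ double cosets, indexed by $\mathrm{Gal}(k_\E/k_\F)$-conjugates. Each summand is a Gauss sum attached to $\bar\alpha_i^{\,\phi^j}+c\,\bar\beta$, where $\bar\beta\in k_\F^\times$ records the level-one part of $\chi$ and $c$ is a fixed scalar. The resulting gamma factor is, up to common terms, a product $\prod_{j=0}^{k-1}\Phi(\bar\alpha_i^{q^j}+\bar\beta)$, that is, a function of the polynomial $P_i(X)=\prod_j(X-\bar\alpha_i^{q^j})$ evaluated at $-\bar\beta$, twisted by multiplicative characters.

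Equality for all $\bar\beta\in k_\F$ and all tame parts of $\chi$ should force $P_1(b)$ and $P_2(b)$ to agree, say up to the known character data, for every $b\in k_\F$. We then need to conclude $P_1=P_2$. This is where $q_\F>(k-1)^2$ enters. We would try a Weil-bound argument: $P_1-P_2$ has degree at most $k-1$ because the leading coefficients match and the norms match by Step 1. If the matching is only up to a character value, we compare a character sum $\sum_b\eta(P_1(b)/P_2(b))$ with the Weil bound $(k-1)\sqrt{q}$, or more simply a root count $k-1<q$. Either way this forces $P_1=P_2$, so $\bar\alpha_1$ and $\bar\alpha_2$ are Galois conjugate.

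\textbf{Step 3: conclusion.} After replacing $\xi_2$ by a $\Gal(\E/\F)$-conjugate, $\xi_1$ and $\xi_2$ agree on $U^1_\E$ and, by Step 1, on $\mu_\E$ modulo characters that are trivial on $\mu_\F$. We also need to match the values on $\mu_\E$ beyond $\mu_\F$ and on $\varpi_\E$, which should follow from the Gauss-sum dependence found in Step 2 together with the equal central characters. Then $(\E,\xi_1)\cong(\E,\xi_2)$, and so $\pi_1\cong\pi_2$ by the Howe/Bushnell--Henniart classification.

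The main obstacle is the explicit evaluation in Step 2 of the epsilon factors over the composita and the extraction of an exact polynomial identity from them. The hardest part is controlling the multiplicative-character twists and the rectifier signs so that the Weil-bound argument applies cleanly.
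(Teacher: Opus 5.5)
\textbf{There is a genuine gap, in Step~3.} Your architecture matches the paper's: work on the Galois side; tame twists give the uniformizer datum and the constant term $\bar a_0$; simple supercuspidal twists give $\bar f_{k,1}(\bar v^{-1})=\bar f_{k,2}(\bar v^{-1})$; then pin down $\xi$. But the step that carries the real difficulty is the one you leave as ``should follow'' in Step~3, and you put the essential use of $q_{\F}>(k-1)^2$ in the wrong place.

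In Step~2 no Weil bound is needed. Fix $v$ and divide the gamma factor for $(v,\varphi,\zeta')$ by the one for $(v,\mathbbm{1}_{\mathcal O_{\F}^{\times}},1)$. This isolates $\varphi\bigl(f_{k,1}(v^{-1})/f_{k,2}(v^{-1})\bigr)=1$ exactly, and a root count (only $q_{\F}\ge k$ is needed) finishes.

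The real issue comes afterwards. Realize both representations with a common lift $f_k$ (Proposition~\ref{Bijection}). Then $\delta=\xi_1\xi_2^{-1}$ is trivial on $1+\mathcal P_{\E_{f_k}}$, on $\F^\times$, and at $\beta_{f_k}$. So it induces a character $\bar\delta$ of $\k_{\L_{f_k}}^\times$ that is trivial on $\k_{\F}^\times$. The only further information available is the non-symmetric factor $\xi\bigl(\sigma_{f_k}/(\sigma_{f_k}-v^{-1})\bigr)$ in \eqref{simpleformula}. It yields $\bar\delta(\bar\sigma_{f_k}-t)=1$ for all $t\in\k_{\F}$.

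For $k=2$ this already suffices, since the classes of $1$ and of $\bar\sigma_{f_k}-t$ exhaust $\k_{\L_{f_k}}^\times/\k_{\F}^\times$. For $k\ge3$, however, these $q_{\F}+1$ classes lie in a cyclic group of order $(q_{\F}^k-1)/(q_{\F}-1)$, and it is not automatic that they generate it. The missing idea is Katz's bound (Theorem~\ref{KatzEstimate}). If $\bar\delta\neq1$, then $q_{\F}=\bigl|\sum_{t}\bar\delta(\bar\sigma_{f_k}-t)\bigr|\le(k-1)\sqrt{q_{\F}}$, which contradicts $q_{\F}>(k-1)^2$. Without this, or an equivalent generation statement, your Step~3 does not determine $\xi$ on $\mu'_{\L_{f_k}}$.

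\textbf{Your description of Step~2 would also lose the information Step~3 needs.}
\begin{enumerate}
\item At depth $1/n$ each $\theta_g$ has $e(\K_g|\F)\,d(\theta_g)=1$. So its $\epsilon$-factor is the exact monomial of Proposition~\ref{PropositionGalois} evaluated at $c_{\theta_g}=g(\beta_{f_k})-\beta_v$, not a Gauss sum.
\item There are $D(v)$ double cosets with $\sum_g m_g=n$, not $k$ of them indexed by Frobenius conjugates.
\item The product collapses through norms to $\xi^{-1}(m_{\beta_v}(\beta_{f_k}))\,\eta^{-1}\bigl((-1)^{kn}m_{\beta_{f_k}}(\beta_v)\bigr)$.
\end{enumerate}
Only the $\eta$-factor is a function of $P_i=\bar f_{k,i}$. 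The $\xi$-factor is $\xi$ evaluated at a single element of $\E_{f_k}$, so reducing everything to ``a function of $P_i(-\bar\beta)$'' discards exactly the term you need.

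Two smaller points. First, $\overline{\varpi_{\E}\alpha_i}$ depends on the choice of $\varpi_{\E}$; the invariant is the residue of $\alpha_i^n\varpi_{\F}$, i.e.\ $\bar\sigma_{f_k}$. Second, $\E/\F$ need not be Galois, so ``replace $\xi_2$ by a $\Gal(\E/\F)$-conjugate'' should become ``choose a common lift of $\bar f_k$.''
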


Next, we aim to distinguish $k$-middle supercuspidals from arbitrary supercuspidal representations of $\GL(kn, \, \F)$.~To explain our next main result precisely, let $\mathcal T_{k,n}$ be the union, over all positive proper divisors $d$ of $k$, of $d$-middle supercuspidals of $\GL(dn, \, \F)$.~Here, a $1$-middle means a simple supercuspidal of $\GL(n, \, \F)$.

\begin{theorem}\label{MainResult2}
Suppose that $p\nmid kn$ and $q_{\F}>(k-1)^2$.~Let $\pi$ be a $k$-middle supercuspidal representation of $\GL(kn, \, \F)$ and $\pi'$ be a supercuspidal representation of $\GL(kn, \, \F)$ with the same central character as $\pi$.~If
\[
\gamma(s, \, \pi\times\tau, \, \psi_{\F}) = \gamma(s, \, \pi'\times\tau, \, \psi_{\F})
\]
as functions of the complex variable $s$ for every $\tau$ that is either a tamely ramified quasi-character of $\F^{\times}$ or an element of $\mathcal T_{k,n}$, then $\pi\cong\pi'$.
\end{theorem}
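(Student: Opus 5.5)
The plan is to reduce Theorem~\ref{MainResult2} to Theorem~\ref{MainResult1}. For this we must show that the hypotheses force $\pi'$ to be itself a $k$-middle supercuspidal. Everything will be done on the Galois side. Let $\sigma$ and $\sigma'$ be the irreducible $kn$-dimensional Langlands parameters of $\pi$ and $\pi'$. By the local Langlands correspondence, twisted gamma factors are preserved, so $\gamma(s,\sigma\otimes\rho,\psi_{\F})=\gamma(s,\sigma'\otimes\rho,\psi_{\F})$ for every $\rho$ corresponding to a tamely ramified character or to an element of $\mathcal T_{k,n}$.

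\textbf{Step 1: depth.} The tame characters $\chi$ include unramified ones and depth-zero ones. Writing $\gamma=\varepsilon\cdot L(1-s,\cdot^\vee)/L(s,\cdot)$, with $L$-factors trivial for irreducible parameters of dimension $>1$, we get equality of $\varepsilon(s,\sigma\otimes\chi)$. The exponent of $q^{-s}$ is the Artin conductor, which equals $kn(1+d(\sigma))$ for $\chi$ generic in the sense that it does not cancel the tame part. Hence $d(\sigma')=d(\sigma)=\frac1n$.

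\textbf{Step 2: twisting number and structure.} Twisting by unramified $\chi$ and comparing $\varepsilon$-factors as functions of $\chi(\varpi)$ recovers the unramified twisting group. More robustly, I would use the criterion that $\gamma(s,\sigma\otimes\rho)$ has a pole (equivalently $L(s,\sigma'\otimes\rho^\vee)\neq1$) iff $\sigma'\cong\rho\otimes(\text{unramified})$. This shows in particular that $\sigma'$ is not of the form $\rho$ or an induction from proper pieces in $\mathcal T_{k,n}$ in the wrong way.

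The main work is then to show that a depth-$\frac1n$ irreducible parameter of dimension $kn$ has the shape $\operatorname{Ind}_{\E/\F}\xi$ with $e(\E/\F)=n$ and $f(\E/\F)=k'$, where $k'\mid k$. Its restriction to wild inertia decomposes into characters of conductor related to depth $1/n$, which forces ramification index exactly $n$ after tame reduction, since $p\nmid kn$. So $\sigma'$ is induced from some $\E'$ of ramification index $n$ and degree $kn$. Its restriction to wild inertia is then a sum of $k/d$ copies of a multiplicity-free representation, i.e.\ $\sigma'$ is a twist-type object built from a $d$-middle parameter $\rho_0$ for some $d\mid k$. If $d<k$, the restriction to $W_{\E_d}$ contains a piece matching an element $\rho\in\mathcal T_{k,n}$ (up to a tame twist). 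The pole criterion, applied to $\sigma'\otimes\rho^\vee$ versus $\sigma\otimes\rho^\vee$, then gives a contradiction, because $\sigma$ is minimax with multiplicity-free wild restriction. Here the explicit formulas for $\varepsilon(\sigma\otimes\rho)$ with $\rho$ simple or $d$-middle are used: via Deligne--Henniart, they distinguish the multiplicity of the wild component through the conductor $a(\sigma\otimes\rho^\vee)$, which drops exactly when wild components coincide.

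\textbf{Step 3: conclusion.} Thus $\sigma'$ is $k$-middle with the same central character as $\sigma$, and Theorem~\ref{MainResult1} gives $\pi\cong\pi'$. Note that simple supercuspidals of $\GL(n,\F)$ lie in $\mathcal T_{k,n}$ with $d=1$, so the hypothesis of that theorem holds.

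\textbf{Main obstacle.} I expect the main obstacle to be the classification in Step 2: showing that any depth-$\frac1n$ irreducible parameter whose conductors $a(\sigma'\otimes\rho)$ match those of $\sigma$ for all $\rho\in\mathcal T_{k,n}$ must have multiplicity-free wild restriction. I would attack it by computing $a(\sigma'\otimes\rho^\vee)$ through the Mackey formula over the tame field $\E$. Conductor drops detect coincidences of the wild characters $\xi|_{P}$ up to Galois conjugacy. The hypothesis $q_{\F}>(k-1)^2$ is needed to find a tame twist avoiding all coincidences simultaneously, via a Weil-bound counting argument of the same kind presumably used for Theorem~\ref{MainResult1}.
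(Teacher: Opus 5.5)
Your outer reduction agrees with the paper's: you get equal depth from the untwisted gamma factor, show that $\pi'$ must be $k$-middle, and then invoke Theorem~\ref{MainResult1}, since simple supercuspidals of $\GL(n,\F)$ lie in $\mathcal T_{k,n}$. The gap is Step~2, which carries everything beyond Theorem~\ref{MainResult1}, and the tools you propose there do not work.

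\begin{itemize}
\item \emph{The pole criterion gives nothing.} Every twist in $\mathcal T_{k,n}$ is a supercuspidal of $\GL(dn,\F)$ with $dn<kn$. For irreducible parameters of different dimensions one always has $L(s,\sigma\otimes\rho^\vee)=L(s,\sigma'\otimes\rho^\vee)=1$, so there are no poles to compare.
\item \emph{Unramified twists do not recover the twisting group.} Twisting by an unramified $\chi$ only multiplies $\varepsilon(s,\sigma,\psi_{\F})$ by a power of $\chi(\varpi_{\F})$ fixed by the conductor.
\item \emph{Your structural claim is false.} A depth-$\frac1n$ irreducible parameter need not be induced from a field of ramification index $n$. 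Take $k=2$, let $\E'/\F$ be totally ramified of degree $2n$ with intermediate field $\L$ of degree $n$, and set $\xi'=(\phi_1\circ N_{\E'/\L})\,\phi_2$. Here $\phi_1$ is a character of $\L^\times$ of depth $\frac1n$ represented by an element of valuation $-\frac1n$. Also $\phi_2$ is a character of $(\E')^\times$ of depth $\frac1{2n}$ represented by an element of valuation $-\frac1{2n}$. This is a Howe factorization, so $\Ind_{\E'/\F}\xi'$ is irreducible of depth $\frac1n$ with multiplicity-free restriction to wild inertia. Yet its twisting number is $f(\E'|\F)=1$, so it is not $2$-middle.
\item \emph{Consequences.} Even the target of your ``main obstacle'' paragraph (multiplicity-free wild restriction) would not force $\sigma'$ to be $k$-middle. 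The hypothesis $q_{\F}>(k-1)^2$ also plays no role in this step; the paper uses it only inside Theorem~\ref{MainResult1}.
\end{itemize}

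The missing idea is the invariant $\varphi_{\pi'}$ from Subsection~\ref{MaximalSimpleTypes}. By Lemma~\ref{degreelemma}, $d=\deg\varphi_{\pi'}$ divides $k$, with $d=k$ exactly when $\pi'$ is $k$-middle. Its proof allows stratum fields of ramification index $mn$ with $m>1$, as in the example above.

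If $d<k$, pick a $d$-middle $\tau$ of $\GL(dn,\F)$ (simple if $d=1$) of depth $\frac1n$ with $\varphi_\tau=\varphi_{\pi'}$. Then $\pi'$ and $\tau$ are not completely distinct. On the other hand, $\pi$ and $\tau$ are completely distinct, because their minimal polynomials have degrees $k\neq d$. So \eqref{BHKinequality} gives $f(\pi'\times\tau^\vee)<kdn^2(1+\frac1n)=f(\pi\times\tau^\vee)$. Theorem~\ref{TheoremConductor} then shows that the gamma factors against $\tau^\vee\in\mathcal T_{k,n}$ differ.

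Your intuition that ``conductors drop when wild components coincide'' is the Galois-side shadow of this. It only becomes an argument once the test twist is chosen to share $\varphi_{\pi'}$, and it needs no counting or Weil-type bound.
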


Let $\ell$ be the smallest prime divisor of $k$.~The largest proper divisor of $k$ is $\frac{k}{\ell}$, so the largest degree of a general linear group occurring in $\mathcal T_{k,n}$ is $\frac{kn}{\ell}$.~When $k$ is odd, $\ell\geq3$, and, for $n\geq2$,
\[
\frac{kn}{\ell} < \left\lfloor\frac{kn}{2}\right\rfloor.
\]
Thus, for odd $k\geq3$, Theorem~\ref{MainResult2} gives infinitely many families of supercuspidal representations for which the bound $\left\lfloor\frac{kn}{2}\right\rfloor$ is not sharp.~In particular, when $k$ is an odd prime, the only positive proper divisor of $k$ is $1$, so tamely ramified quasi-characters of $\F^{\times}$ and simple supercuspidal representations of $\GL(n, \, \F)$ suffice to distinguish a $k$-middle from arbitrary supercuspidal representations of $\GL(kn, \, \F)$ with the same central character.

Our results provide further evidence for the conjecture \cite[Conjecture 1.3]{Luo} of the third-named author and Stevens.~This conjecture predicts that, for supercuspidal representations with the same central character, it suffices in the local converse theorem to consider twisting representations of depth at most that of the representations being compared.~Indeed, the twists in Theorem~\ref{MainResult2} all have depth at most $\frac{1}{n}$.

The paper is organized as follows.~In Section~\ref{Notation}, we fix notation and review maximal simple types, admissible pairs, and local factors.~In Section~\ref{Construction}, we recall the structure of simple supercuspidals and give the construction of $k$-middle supercuspidals.~In Section~\ref{GammaComputations}, we compute the relevant twisted gamma factors on the Galois side.~Finally, in Section~\ref{MainProof}, we prove our main results.

\section*{AI Disclosure}

All mathematical ideas in the paper are due to the authors, except for the suggestion to apply Katz's character-sum estimate in the proof of Theorem \ref{MainResult1}, which was provided by ChatGPT.

\section{Notation and background}\label{Notation}

In this section, we first fix notation and recall maximal simple types and the minimal polynomials associated with positive-depth supercuspidal representations.~We then review the parametrization of irreducible $n$-dimensional representations of $\W_{\F}$ in the essentially tame case via admissible pairs, together with the relevant local factors and conductor formulas.~Finally, we state Katz's character-sum estimate, which will be used in the proof of Theorem \ref{MainResult1}.

\subsection{Notation}

Let $\F$ be a non-archimedean local field of characteristic zero with $\mathcal O_{\F}$ its valuation ring, $\k_{\F}$ its residue field with cardinality $q_{\F}$ and characteristic $p$, and $\mu'_{\F}$ the group of roots of unity of order prime to $p$.~Next, let $\mathcal P_{\F}$ denote the maximal ideal of $\mathcal O_{\F}$ and $\varpi_{\F}$ a fixed uniformizer.~Furthermore, let $\psi_{\F}$ be an additive quasi-character of $\F$ with conductor $\mathcal P_{\F}$, i.e.~$\psi_{\F}$ is non-trivial on $\mathcal O_{\F}$ but trivial on $\mathcal P_{\F}$.~Throughout this paper, $k$, $n \geq2$ and $p \nmid kn$.~The hypothesis on $q_{\F}$ is needed only in Section~\ref{MainProof}.

Let $\E/\F$ be a finite extension.~We let $e\left(\E \, | \, \F\right)$ and $f\left(\E \, | \, \F \right)$ denote the ramification index and residue degree of $\E/\F$ respectively.~For an element $x \in \mathcal{O}_{\F}^{\times}$, we denote its reduction in $\k_{\F}^{\times}$ by $\overline{x}$.

Let $\Mat(n\times m, \, \F)$ denote the space of $n\times m$ matrices with entries in $\F$, and let $I_{n}$ denote the $n\times n$ identity matrix.~Lastly, let $\Ind$ denote smooth induction, with compact induction being denoted by $\ind$.~For a supercuspidal representation $\pi$, we write $d(\pi)$ for its depth.

\subsection{Maximal simple types}\label{MaximalSimpleTypes}

From \cite{BK}, we have that supercuspidal representations of $\GL(n, \, \F)$ may be classified in terms of \textit{maximal simple types} which are special pairs $(J, \, \lambda)$ where $J$ is a compact open subgroup of $\GL(n, \, \F)$ and $\lambda$ is an irreducible representation of $J$.~We refer to \cite{BK} for precise definitions and results concerning the objects introduced in this subsection.

Let $V = \F^{n}$ be an $n$-dimensional vector space over $\F$ with standard basis.~Thus we identify $\text{Aut}_{\F}(V)$ with $\GL(n, \, \F)$ and $A = \text{End}_{\F}(V)$ with $\Mat(n \times n, \, \F)$.~Let $\mathfrak{A}$ be a principal hereditary $\mathcal{O}_{\F}$-order in $A$ with Jacobson radical $\mathfrak{P}$.~Define
\[
U^{0}\left(\mathfrak{A}\right) = U\left(\mathfrak{A}\right) = \mathfrak{A}^{\times}, \; U^{m}\left(\mathfrak{A}\right) = I_{n} + \mathfrak{P}^{m}, \; m \geq 1.
\]
For $m \geq 0$, choose $\beta \in A$ such that $\beta \in \mathfrak{P}^{-m} \setminus \mathfrak{P}^{1-m}$ where $\E = \F \left[\beta\right]$ is a field extension of $\F$ and $\E^{\times}$ normalizes $\mathfrak{A}$.~Provided an additional technical condition is satisfied (namely that the critical exponent $k_{0}\left(\beta, \mathfrak{A}\right) < 0$), these data give a principal simple stratum $\left[\mathfrak{A}, m, 0, \beta \right]$ of $A$.~Take $J = J\left(\beta,\mathfrak{A}\right)$, $J^{1} = J^{1}\left(\beta,\mathfrak{A}\right)$, and $H^{1} = H^{1}\left(\beta,\mathfrak{A}\right)$.~Denote by $\mathcal{C}\left(\mathfrak{A}, \, \beta, \, \psi_{\F}\right)$ the set of simple (linear) characters of $H^{1}$.

Recall the following definition of maximal simple types.
\begin{definition}
\normalfont The pair $(J, \, \lambda)$ is called a \textit{maximal simple type} if one of the following holds:
\begin{enumerate}
\item $J = J\left(\beta,\mathfrak{A}\right)$ is an open compact subgroup associated to a simple stratum $\left[\mathfrak{A}, m, 0, \beta \right]$ of $A$ as above, such that, if $\E = \F \left[\beta\right]$ and $B = \text{End}_{\E}(V)$, then $\mathfrak{B} = \mathfrak{A} \cap B$ is a maximal $\mathcal{O}_{\E}$-order in $B$.~Moreover, there exists a simple character $\theta \in \mathcal{C}\left(\mathfrak{A}, \, \beta, \, \psi_{\F}\right)$ such that
    \[
    \lambda \cong \kappa \otimes \sigma
    \]
where $\kappa$ is a $\beta$-extension of the unique irreducible representation $\eta$ of $J^{1} = J^{1}\left(\beta,\mathfrak{A}\right)$ containing $\theta$, and $\sigma$ is the inflation to $J$ of an irreducible cuspidal representation of
    \[
    \bigslant{J}{J^{1}} \cong \bigslant{U\left( \mathfrak{B} \right)}{U^{1}\left( \mathfrak{B} \right)} \cong \GL(r, \, k_{\E}),
    \]
where $r =\frac{n}{[\E \, : \, \F]}$.
    
\item $(J, \, \lambda) = \left(U\left( \mathfrak{A} \right),  \sigma \right)$, where $\mathfrak{A}$ is a maximal hereditary $\mathcal{O}_{\F}$-order in $A$ and $\sigma$ is the inflation to $U\left( \mathfrak{A} \right)$ of an irreducible cuspidal representation of
    \[
    \bigslant{U\left( \mathfrak{A} \right)}{U^{1}\left( \mathfrak{A} \right)} \cong \GL(n, \, \k_{\F}).
    \]
\end{enumerate}
Since the condition $\beta \in \mathfrak{P}^{-m} \setminus \mathfrak{P}^{1-m}$ in case (1) implies that $\beta$ is non-zero, the depth-zero situation in case (2) is not included in the preceding construction.~For notational convenience, we will nevertheless regard case (2) formally as a special case of case (1) by setting $\beta = 0$, $\E = \F$, and $\theta$, $\eta$, $\kappa$ to be trivial.~In either case, we write $\textbf{J} = \E^{\times}J$.~With these data, any supercuspidal representation $\pi$ of $\GL(n, \, \F)$ is of the form
\[
\pi \cong \ind_{\textbf{J}}^{\GL(n, \, \F)}\Lambda
\]
for some choice of $(\textbf{J}, \, \Lambda)$, where $\Lambda \big|_{J} = \lambda$.~We call such a pair $(\textbf{J}, \, \Lambda)$ an \textit{extended maximal simple type}.~The \textit{depth} of $\pi$ is $d(\pi)=\frac{m}{e(\mathfrak A)}=\frac{m}{e(\E|\F)}$ in case (1), and $d(\pi)=0$ in case (2).
\end{definition}
\noindent For $\pi$ a supercuspidal representation of $\GL(n, \, \F)$, any two extended maximal simple types in $\pi$ are conjugate in $\GL(n, \, \F)$.

Finally, we recall that for a positive-depth supercuspidal representation $\pi$, its associated simple stratum $\left[\mathfrak{A}, m, 0, \beta\right]$ has a \textit{characteristic polynomial} $\phi_{Y_{\beta}}(X) \in \k_{\F}[X]$ that depends only on the equivalence class of the final simple stratum in the defining sequence of $\left[\mathfrak{A}, m, 0, \beta\right]$ \cite[Chapter 2, p.~58]{BK}.
\begin{definition}
\normalfont Let $e := e\left(\mathfrak{A}\right)$ and $g := \gcd(e, m)$.~We set
    \[
    Y_{\beta} = \beta^{e/g}\varpi_{\F}^{m/g} \in A.
    \]
Let $\Phi_{Y_{\beta}}(X) \in \F[X]$ be the characteristic polynomial of $Y_{\beta}$ as an $\F$-endomorphism of $V$.~Then, since $Y_{\beta} \in \mathfrak{A}$, we have $\Phi_{Y_{\beta}}(X) \in \mathcal{O}_{\F}[X]$.~We call the reduction $\phi_{Y_{\beta}}(X)$ of $\Phi_{Y_{\beta}}(X)$ modulo $\mathcal{P}_{\F}$ the \textit{characteristic polynomial} of $\left[\mathfrak{A}, m, 0, \beta\right]$.~Furthermore, $\phi_{Y_{\beta}}$ is a power of a monic irreducible polynomial $\varphi_{\pi}(X) \in \k_{\F}[X]$ that we call the \textit{minimal polynomial} of $\left[\mathfrak{A}, m, 0, \beta\right]$.~From \cite[Subsection 6.1]{BHK}, we have that $\varphi_{\pi}$ is an invariant of $\pi$ with $\varphi_{\pi}(X) \neq X$.
\end{definition}

\subsection{Representations of the Weil group and admissible pairs}

In this subsection, we adopt the notation used in \cite[Section 2]{Adrian-Stevens}.~We normalize the additive valuation $\val_{\F}$ on $\F$ to have image $\mathbb{Z} \, \cup \, \{ \infty \}$, and extend this to an additive valuation on a fixed separable closure $\overline{\F}$ of $\F$; thus, for any finite extension $\E/\F$, we have $\val_{\F}(\E^{\times}) = \frac{1}{e(\E|\F)}\mathbb{Z}$.~We also write $| \cdot |$ for the normalized absolute value on $\overline{\F}$, given by
\[
|x| = q_{\F}^{-\val_{\F}(x)}, 
\]
for $x \in \overline{\F}^{\times}$.~Note that if $\E/\F$ is a finite extension, then the usual normalization of the absolute value on $\E$ is given by $x \mapsto |x|^{[\E \, : \, \F]}$, for $x \in \E$.

Next, we define an additive quasi-character $\psi_{\E}$ of $\E$ by
\[
\psi_{\E}=\psi_{\F}\circ \tr_{\E/\F}.
\]
If $\E/\F$ is tamely ramified, then $\psi_{\E}$ is also of level one.~For $r$ a real number, we write
\[
\mathfrak{P}_{\E}^{r} = \{x \in \E \, : \, \val_{\F}(x) \geq r \} \, \, \text{ and } \, \,
\mathfrak{P}_{\E}^{r+} = \{x \in \E \, : \, \val_{\F}(x) > r \}.
\]
From these definitions, $\mathfrak{P}_{\E}^{1/e(\E|\F)} = \mathcal{P}_{\E} = \mathfrak{P}_{\E}^{0+}$.~We put $\U_{\E} = \U_{\E}^{0} = \mathcal{O}_{\E}^{\times}$, with filtration subgroups $\U_{\E}^{r} = 1 + \mathfrak{P}_{\E}^{r}$, for $r >0$, and $\U_{\E}^{r+} = 1 + \mathfrak{P}_{\E}^{r+}$, for $r \geq 0$.

Let $\W_{\F}$ denote the Weil group of $\F$.~Implicit in the definition of the Weil group and local class field theory are compatible choices of isomorphisms $\E^{\times} \cong \W_{\E}^{\text{ab}}$, for each finite extension $\E/\F$, via which we will identify the quasi-characters of $\E^{\times}$ and of $\W_{\E}$.~Moreover, if $\F \subset \E \subset \K$ and $\xi$ is a quasi-character of $\E^{\times}$, let $\xi_{\K} := \xi \circ N_{\K/\E}$ with $N_{\K/\E}$ denoting the field norm map.~When $\xi$ is viewed as a quasi-character of $\W_{\E}$, the quasi-character $\xi_{\K}$ is simply the restriction of $\xi$ to the subgroup $\W_{\K}$.~For $\E/\F$ a finite extension, we write
\[
\Ind_{\E/\F} = \Ind_{\W_{\E}}^{\W_{\F}}
\]
for the induction functor.

In our current situation, there is a nice parametrization of the irreducible $n$-dimensional representations of $\W_{\F}$ in terms of admissible quasi-characters, introduced by Howe \cite{Howe}.
\begin{definition}
\normalfont An \textit{admissible pair of degree $n$} is a pair $(\E/\F, \, \xi)$ where $\E/\F$ is a (tamely ramified) degree $n$ extension and $\xi$ is a quasi-character of $\E^{\times}$ such that
    \begin{enumerate}
\item $\xi$ does not come via the norm from a proper subfield of $\E$ containing $\F$; and
\item if the restriction $\xi \, \big |_{\, \U_{\E}^{0+}}$ comes via the norm from a subfield $\F \subset \text{L} \subset \E$, then $\E/\text{L}$ is unramified.
    \end{enumerate}
Two admissible pairs $(\E_{1}/\F, \, \xi_{1})$ and $(\E_{2}/\F, \, \xi_{2})$ are said to be conjugate if there is an $\F$-isomorphism from $\E_{1}$ to $\E_{2}$ which takes $\xi_{1}$ to $\xi_{2}$.
\end{definition}

In this situation we also say that $\xi$ is an \textit{admissible quasi-character} of $\E^{\times}$ (relative to $\F$).~For any such admissible quasi-character, we write
\[
\rho_{\xi} := \Ind_{\E/\F}\xi.
\]
The following theorem is from \cite[Theorem 2.2.2]{Moy}.
\begin{theorem}\label{TheoremGalois}
If $(\E/\F, \, \xi)$ is an admissible pair of degree $n$, then $\rho_{\xi}$ is an irreducible $n$-dimensional representation of $\W_{\F}$.~Furthermore, two admissible pairs induce equivalent representations if and only if they are conjugate, and each irreducible $n$-dimensional representation of $\W_{\F}$ is induced from an admissible pair.
\end{theorem}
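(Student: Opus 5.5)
The plan is to follow Howe's and Moy's argument, in three parts: irreducibility via Mackey theory and the admissibility conditions, equivalence via Mackey plus Frobenius reciprocity, and surjectivity by running admissibility backwards from the tame structure of $\W_{\F}$.

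\emph{Irreducibility.} Let $(\E/\F,\xi)$ be admissible of degree $n$; tameness gives $p\nmid n$. By Mackey's criterion, $\rho_\xi=\Ind_{\E/\F}\xi$ is irreducible iff for every $g\in \W_{\F}\setminus \W_{\E}$ the characters $\xi$ and $\xi^g$ differ on $\W_{\E}\cap g\W_{\E}g^{-1}$. I would argue by contradiction. Suppose $\xi^g=\xi$ on this intersection for some such $g$. Let $\L$ be the Galois closure of $\E/\F$; it is tame, so its Galois group is metabelian of the form (cyclic inertia)$\rtimes$(cyclic). I would first treat the restriction of $\xi$ to $\U_{\E}^{0+}$. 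The wild inertia subgroup $\mathcal P$ is a pro-$p$ normal subgroup contained in $\W_{\E}$, and $\xi|_{\mathcal P}$ is determined by $\xi|_{\U_{\E}^{0+}}$. The stabilizer of $\xi|_{\mathcal P}$ in $\W_{\F}$ is $\W_{\L'}$ for some field $\F\subset \L'\subset\E$. Using $p\nmid[\E:\L']$, a standard cohomological (Clifford theory) argument shows that $\xi|_{\U_{\E}^{0+}}$ factors through $N_{\E/\L'}$. Admissibility condition (2) then forces $\E/\L'$ to be unramified. Inside the unramified (cyclic) extension $\E/\L'$, if $\xi$ were fixed by a nontrivial element of $\Gal(\E/\L')$, Hilbert 90 would make $\xi$ factor through the norm to an intermediate field, contradicting condition (1). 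This gives irreducibility.

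\emph{Equivalence.} Conjugate pairs clearly induce equivalent representations. Conversely, if $\rho_{\xi_1}\cong\rho_{\xi_2}$, Frobenius reciprocity and Mackey's formula produce $g\in \W_{\F}$ with $\xi_1=\xi_2^g$ on $\W_{\E_1}\cap g\W_{\E_2}g^{-1}$. I would then run the same wild-part analysis: restrict to $\mathcal P$, match the stabilizer fields, and use conditions (1) and (2). This should force $g\W_{\E_2}g^{-1}=\W_{\E_1}$, so $g$ induces an $\F$-isomorphism $\E_2\to\E_1$ carrying $\xi_2$ to $\xi_1$.

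\emph{Surjectivity.} Let $\rho$ be irreducible of dimension $n$ with $p\nmid n$ (the essentially tame setting). Decompose $\rho|_{\mathcal P}$ into characters; all $\W_{\F}$-conjugates occur, each orbit with equal multiplicity. Choose a character $\chi$ and let $\W_{\K}$ be its stabilizer, so $\rho=\Ind_{\K/\F}\rho'$ with $\rho'|_{\mathcal P}$ isotypic of type $\chi$. Since $p\nmid n$, $\chi$ extends to a character $\tilde\chi$ of $\W_{\K}$, again by a cohomological argument. Then $\rho'\otimes\tilde\chi^{-1}$ is trivial on $\mathcal P$ and so factors through the tame quotient of $\W_{\K}$, which is (pro-cyclic)$\rtimes$(Frobenius). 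Irreducible representations of this group are induced from characters of an unramified extension, so $\rho'=\Ind_{\E/\K}\xi$ with $\E/\K$ unramified. Checking conditions (1) and (2) for $(\E/\F,\xi)$ is then a reversal of the irreducibility argument.

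The main obstacle will be the wild-part step: identifying the stabilizer of $\xi|_{\mathcal P}$ with a norm-descent field, and proving the extension of $\chi$. Both rest on $H^2$-vanishing for groups of order prime to $p$, and I would first try the explicit approach, using the splitting $\W_{\K}/\mathcal P\cong \widehat{\mathbb Z}'(1)\rtimes\widehat{\mathbb Z}$ and extending $\chi$ across the quotient step by step.
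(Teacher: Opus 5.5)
The paper gives no proof of this statement. It quotes Moy \cite[Theorem 2.2.2]{Moy}, which builds on Howe. Your outline is the standard argument from that literature and is sound: Mackey's criterion plus the wild-inertia stabilizer field $\L'$ for irreducibility and conjugacy, and Clifford theory plus the structure of the tame quotient for surjectivity.

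Three points need tightening.

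\emph{The hypothesis $p\nmid n$.} ``Tameness gives $p\nmid n$'' is false. Tame ramification only gives $p\nmid e(\E|\F)$, and $p$ may divide $f(\E|\F)$. What you actually use, both for $p\nmid[\E:\L']$ and in the surjectivity part, is the paper's standing hypothesis $p\nmid kn$.

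\emph{The extension step you flag as the main obstacle.} Describing it as ``$H^2$-vanishing for groups of order prime to $p$'' is inaccurate, since $\W_{\K}/\mathcal P$ contains a Frobenius copy of $\mathbb Z$. In any case, no cohomology is needed; the determinant trick settles it.
\begin{itemize}
\item In the surjectivity part, $\rho'|_{\mathcal P}$ is $\chi$-isotypic, so $(\det\rho')|_{\mathcal P}=\chi^{\dim\rho'}$. The character $\chi$ has $p$-power order $p^a$, being a smooth character of a pro-$p$ group, and $\dim\rho'$ divides $n$, so $p\nmid\dim\rho'$. Choose $m$ with $m\dim\rho'\equiv 1\pmod{p^a}$. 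Then $(\det\rho')^m$ is a character of $\W_{\K}$ extending $\chi$.
\item In the irreducibility part, apply the same to $\Ind_{\E/\L'}\xi$. Since $\alpha:=\xi|_{\mathcal P}$ is $\W_{\L'}$-stable, Mackey gives $(\Ind_{\E/\L'}\xi)|_{\mathcal P}=[\E:\L']\cdot\alpha$. So a suitable power of $\det\Ind_{\E/\L'}\xi$ extends $\alpha$ to $\W_{\L'}$. Consequently $\xi|_{\U_{\E}^{0+}}$ factors through $N_{\E/\L'}$, which is exactly the input needed for admissibility condition (2).
\end{itemize}

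\emph{The equivalence step.} Your ``this should force'' can be completed as follows.
\begin{itemize}
\item Replace $(\E_2,\xi_2)$ by its $g$-conjugate. Then $\xi_1$ and $\xi_2$ agree on $\mathcal P\subseteq \W_{\E_1}\cap\W_{\E_2}$, so they have the same stabilizer field $\L'$.
\item By your irreducibility analysis, both $\E_1/\L'$ and $\E_2/\L'$ are unramified of degree $n/[\L':\F]$. Hence $\E_1=\E_2$ inside $\overline{\F}$.
\item The Mackey condition then says $\xi_1=\xi_2$ on all of $\W_{\E_1}$, which is the required conjugacy.
\end{itemize}

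With these fixes, the verification of conditions (1) and (2) in the surjectivity part also goes through. For (1), a norm factorization through a proper subfield would make $\Ind_{\E/\F}\xi$ reducible. For (2), any field from which $\xi|_{\U_{\E}^{0+}}$ descends has Weil group stabilizing $\chi$, so it contains $\K$, and $\E/\K$ is unramified.
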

Let $\E/\F$ be a degree $n$ extension, and $\xi$ a \textit{ramified quasi-character} of $\E^{\times}$; that is, $\xi$ is non-trivial on $\U_{\E}$.~Let $d(\xi) \in \frac{1}{e(\E|\F)}\mathbb{Z}$ be the \textit{normalized depth} of $\xi$; that is, $\xi$ is trivial on $\U_{\E}^{d(\xi)+}$ but not on $\U_{\E}^{d(\xi)}$.~There is then $c_{\xi}\in \mathfrak{P}_{\E}^{-d(\xi)}$, well-defined modulo $\mathfrak{P}_{\E}^{-d(\xi)/2}$ such that
\[
\xi(1+x) = \psi_{\E}(c_{\xi}x)
\]
for $x \in \mathfrak{P}_{\E}^{d(\xi)/2+}$.~We say that $c_{\xi}$ \textit{represents} $\xi$.

\subsection{Local factors}\label{LocalFactors}
As in the preceding subsections, let $\E/\F$ be a degree $n$ extension.~Associated with any quasi-character $\xi$ of $\E^{\times}$ are local factors $L(s, \, \xi)$, $\epsilon(s, \, \xi, \, \psi_{\E})$, and $\gamma(s, \, \xi, \, \psi_{\E})$, by Tate’s thesis \cite{Tate} (see also the account in \cite[Section 23]{BH-7}).~When $\xi$ is ramified we have $L(s, \xi) = 1$ and $\epsilon(s, \, \xi, \, \psi_{\E}) = \gamma(s, \, \xi, \, \psi_{\E})$.~Denote by $\epsilon(\xi, \, \psi_{\E})$ the value of the $\epsilon$-factor at $s= 0$.~Note that $\epsilon(s, \, \xi, \, \psi_{\E}) = \epsilon(\xi|\cdot|^{ns}, \, \psi_{\E})$; the extra factor $n$ appears here because of our normalization of the absolute value.~We have the following proposition from \cite[(2.3.17)]{Moy}.
\begin{proposition}\label{PropositionGalois}
For $\xi$ a ramified quasi-character of $\E^{\times}$ of depth $d(\xi)$ with $e(\E \, | \, \F) \, d(\xi)$ odd, we have
    \[
    \epsilon(\xi, \, \psi_{\E}) = \xi^{-1}(c_{\xi}) \, \psi_{\E}(c_{\xi}) \, |c_{\xi}|^{n/2}.~
    \]
\end{proposition}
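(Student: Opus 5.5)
The plan is to compute the Tate local constant directly as a normalized Gauss sum and then evaluate that sum by stationary phase. Odd parity of the depth is exactly what makes the stationary set collapse to a single coset.

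\textbf{Setting up the sum.} Put $m := e(\E\,|\,\F)\,d(\xi)$. This is an odd positive integer, and it is the depth of $\xi$ in the usual $\E$-normalization, so $\xi$ is trivial on $1+\mathcal P_{\E}^{m+1}$ and its conductor exponent is $m+1$. Since $\E/\F$ is tame, $\psi_{\E}$ has conductor $\mathcal P_{\E}$.

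Tate's formula \cite[Section 23]{BH-7} then expresses $\epsilon(\xi,\psi_{\E})$ as follows. Pick any $c$ of $\E$-valuation $-m$ (the shift of $-(m+1)$ by the level of $\psi_{\E}$). Then
\[
\epsilon(\xi,\psi_{\E}) = |c|^{n/2}\, q_{\E}^{-(m+1)/2}\sum_{u\in \U_{\E}/\U_{\E}^{m+1}} \xi^{-1}(cu)\,\psi_{\E}(cu).
\]
The factor $|c|^{n/2}$ appears because the normalized absolute value on $\E$ is $|\cdot|^{n}$. We are free to take $c=c_\xi$, which does have valuation $-m$ since $c_\xi\in\mathfrak P_{\E}^{-d(\xi)}$ represents a character that is non-trivial on $\U_{\E}^{d(\xi)}$.

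\textbf{Stationary phase.} Write $m=2j+1$. Then $\mathfrak P_{\E}^{d(\xi)/2+}=\mathcal P_{\E}^{j+1}$, so $\xi(1+x)=\psi_{\E}(c_\xi x)$ for $x\in\mathcal P_{\E}^{j+1}$. Also $2(j+1)=m+1$, so this formula is genuinely a character on $\U_{\E}^{j+1}/\U_{\E}^{m+1}$, with no quadratic error term.

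Decompose $u=v(1+x)$ with $v$ running over $\U_{\E}/\U_{\E}^{j+1}$ and $x$ over $\mathcal P_{\E}^{j+1}/\mathcal P_{\E}^{m+1}$. The summand becomes
\[
\xi^{-1}(c_\xi v)\,\psi_{\E}(c_\xi v)\,\psi_{\E}\bigl(c_\xi(v-1)x\bigr).
\]
The inner sum over $x$ is a character sum of $x\mapsto \psi_{\E}(c_\xi(v-1)x)$ over $\mathcal P_{\E}^{j+1}/\mathcal P_{\E}^{m+1}$. It vanishes unless $c_\xi(v-1)\mathcal P_{\E}^{j+1}\subset \mathcal P_{\E}$, that is, unless $\val_{\E}(v-1)\ge m-j=j+1$. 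In that case it equals $q_{\E}^{\,m-j}=q_{\E}^{\,j+1}$.

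This condition forces $v\in\U_{\E}^{j+1}$, so only the coset $v=1$ survives. The sum therefore reduces to $\xi^{-1}(c_\xi)\psi_{\E}(c_\xi)\,q_{\E}^{(m+1)/2}$, and this cancels the normalizing factor exactly, giving the stated formula.

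The same computation also shows that the answer does not depend on the choice of $c_\xi$ modulo $\mathfrak P_{\E}^{-d(\xi)/2}$, which is a useful consistency check.

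\textbf{Expected obstacle.} The main difficulty is the bookkeeping of normalizations rather than the stationary-phase step. One must get right the level of $\psi_{\E}$, the Haar measure normalization (self-dual with respect to $\psi_{\E}$), and the conversion between $\val_{\F}$ and $\val_{\E}$. The oddness of $m$ must enter exactly so that $\U_{\E}^{j+1}$ is simultaneously the range where $\xi$ is linearized and the stationary set; for even $m$ a residual quadratic Gauss sum over $\k_{\E}$ would remain. I would first carry out the computation with $\E$-normalized valuations and absolute values, and only at the end convert $|c_\xi|_{\E}^{1/2}$ to $|c_\xi|^{n/2}$.
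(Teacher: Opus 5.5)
\textbf{Verdict: your proof is correct, and it is a different route only because the paper gives no proof.} The paper quotes this formula from Moy \cite[(2.3.17)]{Moy} and uses it as a black box in Propositions~\ref{PropositionGaloisTamelyRamified} and~\ref{GaloisPropositionMain}. Your argument is the classical derivation behind such formulas:
\begin{enumerate}
\item write $\epsilon(\xi,\psi_{\E})$ as Tate's Gauss sum, using the measure self-dual for $\psi_{\E}$ (cf.\ \cite[Section 23]{BH-7});
\item take $c=c_\xi$ and, with $m=2j+1$, split $\U_{\E}/\U_{\E}^{m+1}$ along $\U_{\E}^{j+1}$;
\item use orthogonality to kill every coset except the trivial one.
\end{enumerate}

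\textbf{Normalizations.} I checked these and they are right:
\begin{itemize}
\item $|c_\xi|^{n/2}=q_{\E}^{m/2}$, because $n=e(\E|\F)f(\E|\F)$.
\item The surviving inner sum is $q_{\E}^{j+1}=q_{\E}^{(m+1)/2}$, which cancels your prefactor $q_{\E}^{-(m+1)/2}$ exactly.
\item The stationarity condition $\val_{\E}(v-1)\ge j+1$ correctly uses that $\psi_{\E}$ has conductor exactly $\mathcal{P}_{\E}$.
\end{itemize}

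\textbf{One terse step.} You assert $\val_{\E}(c_\xi)=-m$ exactly without spelling out why. It holds because $m\ge j+1$, so $\U_{\E}^{m}\subseteq\U_{\E}^{j+1}$. Hence non-triviality of $\xi$ on $\U_{\E}^{m}$ is read off from $\psi_{\E}(c_\xi\,\cdot)$, and it forces $c_\xi\mathcal{P}_{\E}^{m}\not\subseteq\mathcal{P}_{\E}$.

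\textbf{What your route buys over the citation.} It makes explicit two things the statement leaves implicit:
\begin{itemize}
\item The normalization $c_\xi\in\mathfrak{P}_{\E}^{-d(\xi)}$ presupposes that $\psi_{\E}$ has level one, i.e.\ that $\E/\F$ is tame. This holds everywhere the paper applies the proposition.
\item Oddness of $e(\E|\F)d(\xi)$ is exactly what makes the range where $\xi$ is linearized coincide with the stationary set. That is why no residual quadratic Gauss sum over $\k_{\E}$ survives.
\end{itemize}
It also shows directly that the right-hand side does not depend on the choice of $c_\xi$ modulo $\mathfrak{P}_{\E}^{-d(\xi)/2}$. The citation, in exchange, avoids re-deriving a classical fact that the paper only needs as input.
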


We recall from \cite[Subsection 29.4]{BH-7}:
\begin{equation}\label{LanglandsConstant}
    \frac{\epsilon \left(s, \, \Ind_{\E/\F}\left(\mathbbm{1}_{\E^{\times}}\right), \, \psi_{\F}\right)}{\epsilon \left(s, \, \mathbbm{1}_{\E^{\times}}, \, \psi_{\E} \right)} = \lambda_{\E/\F}(\psi_{\F})
\end{equation}
where $\mathbbm{1}_{\E^{\times}}$ is the trivial quasi-character of $\W_{\E}$, and $\lambda_{\E/\F}(\psi_{\F})$ is the \textit{Langlands constant} (for more on the Langlands constant, see \cite[Subsection 30.4]{BH-7}).~Let $\F \subset \text{L} \subset \E$.~Then
\begin{equation}\label{LanglandsConstantTower}
    \lambda_{\E/\F}\left(\psi_{\F}\right) = \lambda_{\E/\text{L}}\left(\psi_{\L}\right) \lambda_{\text{L}/\F}\left(\psi_{\F}\right)^{m}
\end{equation}
where $\left[\E : \text{L}\right] = m$ \cite[(2.4.4)]{Moy}.~Furthermore, if $\E/\F$ is a degree $n$ unramified extension, then from \cite[(2.5.3)]{Moy}, we have
\begin{equation}\label{LanglandsConstantUnramified}
    \lambda_{\E/\F}\left(\psi_{\F}\right) = (-1)^{n-1}.
\end{equation}

Next, we define the quasi-character $\varkappa_{\E/\F}$ of $\W_{\F}$ (which we also identify as a quasi-character of $\F^{\times}$ via local class field theory) by
\begin{equation}\label{eqn:varkappa}
\varkappa_{\E/\F} = \det\left(\Ind_{\E/\F}\mathbbm{1}_{\E^{\times}}\right).~
\end{equation}
We further note that $\varkappa_{\E/\F}$ has order dividing two \cite[Subsection 29.2]{BH-7}.

We recall the following result \cite[Lemmas 2.4, 2.5]{ALST}.
\begin{lemma}\label{LemmaGalois}
Let $\E$ and {\normalfont $\text{L}$} be tamely ramified field extensions of $\F$, and let $\chi$ and $\eta$ be quasi-characters of \, $\E^\times$ and \, {\normalfont$\text{L}^\times$}, respectively.~For $g\in \W_{\F}$, write $\E_g=g(\E)$ and {\normalfont $\K_{g}=\E_g\text{L}$}, and set
\[
\theta_g := ({}^{g}\chi)_{ \, \K_{g}}\eta_{ \, \K_{g}}.
\]
Then {\normalfont
\[
\rho_\chi\otimes \rho_\eta
\cong
\bigoplus_{g \, \in \, \smalldoublecoset{\W_{\text{L}}}{\W_{\F}}{\W_{\E}}}
\Ind_{\K_{g}/\F}\theta_g
\]
} and {\normalfont
\[
\gamma(s, \, \rho_\chi\otimes \rho_\eta, \, \psi_{\F}) = \prod_{g \, \in \, \smalldoublecoset{\W_{\text{L}}}{\W_{\F}}{\W_{\E}}}
\lambda_{\K_{g}/\F}(\psi_{\F}) \, \gamma(s, \, \theta_g, \, \psi_{\K_{g}}).
\]
}
\end{lemma}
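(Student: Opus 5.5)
The plan is to derive the decomposition from Mackey theory and then obtain the gamma factor identity from additivity together with inductivity in degree zero of the Langlands--Deligne local constants. Since $\E/\F$ and $\L/\F$ are finite, $\W_{\E}$ and $\W_{\L}$ have finite index in $\W_{\F}$. All representations involved are therefore finite-dimensional and smooth, and the usual finite-group Mackey arguments apply verbatim.

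\textbf{Step 1: the decomposition.} First apply the projection formula:
\[
\rho_\chi\otimes\rho_\eta=\Ind_{\W_{\L}}^{\W_{\F}}\eta\otimes\rho_\chi\cong\Ind_{\W_{\L}}^{\W_{\F}}\bigl(\eta\otimes \rho_\chi|_{\W_{\L}}\bigr).
\]
Next use the Mackey restriction formula:
\[
\rho_\chi|_{\W_{\L}}\cong\bigoplus_{g}\Ind_{\W_{\L}\cap g\W_{\E}g^{-1}}^{\W_{\L}}{}^{g}\chi ,
\]
where $g$ runs over $\W_{\L}\backslash\W_{\F}/\W_{\E}$. Here $g\W_{\E}g^{-1}=\W_{\E_g}$, so the intersection $\W_{\L}\cap\W_{\E_g}$ is the subgroup fixing the compositum. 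That is, it equals $\W_{\K_g}$ with $\K_g=\E_g\L$, and ${}^g\chi$ restricted to it is $({}^g\chi)_{\K_g}$ by the compatibility of restriction with the norm recalled above. Applying the projection formula again inside $\W_{\L}$ gives
\[
\eta\otimes\Ind_{\K_g/\L}({}^g\chi)_{\K_g}\cong\Ind_{\K_g/\L}\bigl(\eta_{\K_g}({}^g\chi)_{\K_g}\bigr)=\Ind_{\K_g/\L}\theta_g .
\]
Transitivity of induction then yields the summand $\Ind_{\K_g/\F}\theta_g$.

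\textbf{Step 2: the gamma factor.} The factors $L$, $\epsilon$ and $\gamma$ are additive in direct sums, so it suffices to show
\[
\gamma(s,\Ind_{\K/\F}\theta,\psi_{\F})=\lambda_{\K/\F}(\psi_{\F})\,\gamma(s,\theta,\psi_{\K})
\]
for a quasi-character $\theta$ of $\K^\times$. The $L$-factor is inductive (Artin formalism), and the same holds for the contragredient. It therefore remains to treat $\epsilon$. Inductivity in degree zero says that
\[
\frac{\epsilon(s,\Ind_{\K/\F}\theta,\psi_{\F})}{\epsilon(s,\theta,\psi_{\K})}
\]
is independent of the one-dimensional $\theta$. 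Taking $\theta=\mathbbm{1}_{\K^\times}$ identifies this ratio with $\lambda_{\K/\F}(\psi_{\F})$ via \eqref{LanglandsConstant}. Combining these facts gives the claimed product formula.

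\textbf{Main obstacle.} The main delicate point is bookkeeping rather than substance. One must check that the conjugation convention (${}^g\chi(x)=\chi(g^{-1}xg)$ on $\W_{\E_g}$) matches the double coset indexing. One must also check that the normalization of the complex variable is consistent: $\epsilon(s,\theta,\psi_{\K})$ uses $|\cdot|^{[\K:\F]s}$ under our normalization of the absolute value, which is exactly what restricting the character $|\cdot|^s$ of $\W_{\F}$ to $\W_{\K}$ produces. With that checked, inductivity applies to twists by $|\cdot|^s$ uniformly in $s$.
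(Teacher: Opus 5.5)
Your proposal is correct. The paper does not reprove this lemma but quotes it from \cite[Lemmas 2.4, 2.5]{ALST}, and your argument is the standard proof behind that citation: Mackey restriction plus the projection formula give the decomposition, and then additivity, inductivity of $L$-factors and inductivity in degree zero of $\epsilon$ give the gamma identity, with the ratio identified as $\lambda_{\K/\F}(\psi_{\F})$ via \eqref{LanglandsConstant}. Your check that the $s$-normalizations agree (restricting $|\cdot|^{s}$ to $\W_{\K}$ gives $|\cdot|^{[\K:\F]s}$ on $\K^\times$) is the right one, and your argument never uses tameness, so that hypothesis plays no role in either part.
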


The local Langlands correspondence preserves the twisted gamma factors considered here \cite{Harris,Henniart1}.~Let $\pi$ be a supercuspidal representation of $\GL(n,\F)$, with central character $\omega_{\pi}$ and Langlands parameter $\Phi_{\pi}$.~From \cite[Proposition~29.2]{BH-7} and \cite[p.~2]{Harris}, we have the identity
\begin{equation}\label{detidentity}
\omega_{\pi} = \det\Phi_{\pi} = \varkappa_{\E/\F}\cdot \xi\,\big|_{\, \F^\times},
\end{equation}
where $\Phi_{\pi}=\Ind_{\E/\F}\xi$ for an admissible pair $(\E/\F,\xi)$ of degree $n$.~Such a pair exists under our standing assumption $p\nmid n$; see \cite[Parametrization Theorem, p.~2]{BH-6}.

For supercuspidal representations $\pi_{1}$ and $\pi_{2}$ of $\GL(N_{1}, \, \F)$ and $\GL(N_{2}, \, \F)$, respectively, with $N_{1} \neq N_{2}$, their Rankin--Selberg $L$-factors and the corresponding dual $L$-factors are $1$ \cite[Proposition 8.1(i)]{JPSS}.~Thus, $\gamma=\epsilon$ for these pairs.

Finally, we introduce the \textit{conductor} $f\left(\pi_{1} \times \pi_{2}, \, \psi_{\F}\right)$ attached to $\pi_{1}$ and $\pi_{2}$.
\begin{theorem}\label{TheoremConductor}\cite[Section 2.7]{JPSS}
    Let $\pi_{1}$ and $\pi_{2}$ be as above.~There are $c \in \mathbb{C}^{\times}$ and $f\left(\pi_{1} \times \pi_{2}, \psi_{\F}\right) \in \mathbb{Z}$ such that 
    \[
    \gamma(s, \, \pi_{1} \times \pi_{2}, \, \psi_{\F}) = c \cdot  q_{\F}^{-f\left(\pi_{1} \times \pi_{2}, \, \psi_{\F}\right)s}.
    \]
\end{theorem}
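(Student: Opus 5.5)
The idea is to show that, because the $L$-factors are trivial, $\gamma(s,\pi_1\times\pi_2,\psi_{\F})$ is a Laurent polynomial in $q_{\F}^{-s}$ whose inverse is also a Laurent polynomial. Such an element of $\mathbb{C}[q_{\F}^{s},q_{\F}^{-s}]$ is a unit, and the units of this ring are exactly the monomials $c\,q_{\F}^{-fs}$ with $c\in\mathbb{C}^{\times}$ and $f\in\mathbb{Z}$.

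We may assume $N_1>N_2$ by symmetry of the Rankin--Selberg theory. For Whittaker functions $W\in\mathcal W(\pi_1,\psi_{\F})$ and $W'\in\mathcal W(\pi_2,\psi_{\F}^{-1})$, consider the Rankin--Selberg integrals $\Psi(s;W,W')$ of \cite{JPSS} and their duals $\widetilde\Psi(1-s;\widetilde W,\widetilde W')$. The local functional equation reads
\[
\widetilde\Psi(1-s;\widetilde W,\widetilde W')=\omega_{\pi_2}(-1)^{N_1-1}\,\gamma(s,\pi_1\times\pi_2,\psi_{\F})\,\Psi(s;W,W').
\]
The plan rests on the following standard facts from \cite{JPSS}.
\begin{enumerate}
\item Every integral $\Psi(s;W,W')$ is a rational function of $q_{\F}^{-s}$.
\item The $\mathbb{C}$-span of these integrals is a $\mathbb{C}[q_{\F}^{s},q_{\F}^{-s}]$-fractional ideal containing $1$, generated by $L(s,\pi_1\times\pi_2)$.
\item Since $\pi_1$ is supercuspidal, the restriction of $W$ to the embedded $\GL(N_2,\F)$ is compactly supported modulo the unipotent radical. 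Hence each integral converges for all $s$ and lies in $\mathbb{C}[q_{\F}^{s},q_{\F}^{-s}]$.
\end{enumerate}
In particular the ideal in (2) is the whole ring, consistent with $L(s,\pi_1\times\pi_2)=1$ as recalled above. The same statements hold for the dual integrals and for the contragredient pair $(\widetilde\pi_1,\widetilde\pi_2)$.

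\textbf{Step 1: $\gamma$ is a Laurent polynomial.} Choose finitely many pairs $(W_i,W_i')$ with $\sum_i\Psi(s;W_i,W_i')=1$. Summing the functional equation over these pairs gives
\[
\gamma(s,\pi_1\times\pi_2,\psi_{\F})=\omega_{\pi_2}(-1)^{N_1-1}\sum_i\widetilde\Psi(1-s;\widetilde W_i,\widetilde W_i').
\]
The right-hand side lies in $\mathbb{C}[q_{\F}^{s},q_{\F}^{-s}]$ by fact (3), so $\gamma(s,\pi_1\times\pi_2,\psi_{\F})$ is a Laurent polynomial.

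\textbf{Step 2: its inverse is a Laurent polynomial.} Apply the functional equation twice, the second time to the pair $(\widetilde\pi_1,\widetilde\pi_2)$ with the dual integrals. Since $\widetilde{\widetilde W}$ is a translate of $W$, this yields the identity
\[
\gamma(s,\pi_1\times\pi_2,\psi_{\F})\,\gamma(1-s,\widetilde\pi_1\times\widetilde\pi_2,\psi_{\F}^{-1})=\omega_{\pi_1}(-1)^{N_2}\omega_{\pi_2}(-1)^{N_1},
\]
which is a nonzero constant. By Step 1 applied to $(\widetilde\pi_1,\widetilde\pi_2)$, the second factor is also a Laurent polynomial in $q_{\F}^{-s}$.

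\textbf{Step 3: conclusion.} Since $\mathbb{C}[X,X^{-1}]$ is a domain whose units are the monomials $cX^{-f}$, the gamma factor has the form $c\,q_{\F}^{-f s}$ with $c\in\mathbb{C}^{\times}$ and $f\in\mathbb{Z}$. The integer $f$ depends only on $\pi_1$, $\pi_2$ and $\psi_{\F}$, and this is the conductor $f(\pi_1\times\pi_2,\psi_{\F})$.

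The main obstacle is fact (3): the Laurent-polynomiality of the integrals, which is where supercuspidality is used. For $N_1>N_2$, this requires the compact support of $W$ restricted to $\GL(N_2,\F)$ modulo the unipotent radical, coming from compact support of matrix coefficients. The corresponding statement for the dual integrals, which involve an extra integration over $\Mat((N_1-N_2-1)\times N_2,\F)$, also needs care.

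An alternative route goes through the Galois side, using the local Langlands correspondence \cite{Harris,Henniart1}. There $\gamma=\epsilon$ for the irreducible parameter $\Phi_{\pi_1}\otimes\Phi_{\pi_2}$, which has no unramified-invariant constituent. The Artin--Deligne formula $\epsilon(s,\rho,\psi_{\F})=\epsilon(0,\rho,\psi_{\F})\,q_{\F}^{-(a(\rho)+\dim\rho\cdot n(\psi_{\F}))s}$ then gives the same conclusion directly.
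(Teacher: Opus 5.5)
Your argument is correct, and it is the standard proof behind the result the paper simply cites from \cite{JPSS}: since $L(s,\pi_1\times\pi_2)=1$, the integrals span all of $\mathbb{C}[q_{\F}^{s},q_{\F}^{-s}]$, so the functional equation makes $\gamma$ a Laurent polynomial, and applying the functional equation twice shows $\gamma$ is a unit, hence a monomial. For the double application you need the JPSS functional equation that pairs the $j$-th integral with the $(N_1-N_2-1-j)$-th, and two harmless slips should be fixed: with $\psi_{\F}^{-1}$ in the second factor the product in Step~2 is exactly $1$ (your constant $\omega_{\pi_1}(-1)^{N_2}\omega_{\pi_2}(-1)^{N_1}$ is the one for $\psi_{\F}$ in both factors), and in the Galois-side remark $\Phi_{\pi_1}\otimes\Phi_{\pi_2}$ need not be irreducible (cf.\ Lemma~\ref{LemmaGalois}), though it contains no unramified character because $N_1\neq N_2$, which is all that is needed.
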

\noindent We further note that $f\left(\pi_{1} \times \pi_{2}, \psi_{\F}\right) = f\left(\pi_{1} \times \pi_{2}\right)-N_{1}N_{2}$ where $f\left(\pi_{1} \times \pi_{2}\right)$ is independent of $\psi_{\F}$ \cite{BHK}.

\subsection{A character-sum estimate}

We will use the following character-sum estimate of Katz in the proof of Theorem \ref{MainResult1} \cite[Theorem 1]{Katz}.
\begin{theorem}[Katz]\label{KatzEstimate}
Let $\k/\k_{\F}$ be an extension of degree $d\geq2$, let $x$ generate $\k$ over $\k_{\F}$, and let $\theta$ be a non-trivial multiplicative character of $\k^\times$.~Then
\[
\left|\sum_{t\in \k_{\F}}\theta(x-t)\right|\leq(d-1)\sqrt{q_{\F}}.
\]
\end{theorem}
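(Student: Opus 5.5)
The statement is Katz's estimate, quoted from \cite[Theorem 1]{Katz}, and the paper uses it as a black box. If I had to supply a proof, I would follow the $\ell$-adic cohomological route: realize $t \mapsto \theta(x-t)$ as the trace function of a rank-one lisse sheaf on an open subset of $\mathbb{A}^1_{\k_{\F}}$, compute the dimension of its compactly supported cohomology with the Grothendieck--Ogg--Shafarevich formula, and then apply Deligne's weight bound.

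First, I would set up the geometry. Let $R=\mathrm{Res}_{\k/\k_{\F}}\mathbb{G}_m$ be the Weil restriction torus, so that $R(\k_{\F})=\k^\times$. The character $\theta$ gives, via the Lang torsor, a rank-one Kummer sheaf $\mathcal L_\theta$ on $R$ whose Frobenius trace at a point $y\in R(\k_{\F})$ is $\theta(y)$. Consider the morphism $f\colon U\to R$, $t\mapsto x-t$, where $U\subset\mathbb{A}^1$ is the complement of the closed point cut out by the minimal polynomial of $x$. This polynomial is irreducible of degree $d$, because $x$ generates $\k$ over $\k_{\F}$. Every $t\in\k_{\F}$ lies in $U(\k_{\F})$, since $x\notin\k_{\F}$ forces $x-t\neq0$. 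Hence
\[
\sum_{t\in\k_{\F}}\theta(x-t)=\sum_{t\in U(\k_{\F})}\operatorname{tr}\bigl(\mathrm{Frob}_t\mid f^*\mathcal L_\theta\bigr).
\]

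Second, I would pass to $\overline{\k}_{\F}$. There $R\cong\mathbb{G}_m^d$ through the $d$ embeddings $\sigma_i$ of $\k$, and $f^*\mathcal L_\theta$ becomes $\bigotimes_{i}\mathcal L_{\theta^{q_{\F}^i}}(\sigma_i(x)-t)$. The points $\sigma_i(x)$ are pairwise distinct. Each character $\theta^{q_{\F}^i}$ is nontrivial because $\theta$ is. So the sheaf is lisse of rank one on $\mathbb{A}^1$ minus $d$ points, tamely ramified at each of them with nontrivial local monodromy, and tame at $\infty$. In particular it is not geometrically constant, which gives $H^0_c=H^2_c=0$. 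Since all ramification is tame, the Grothendieck--Ogg--Shafarevich formula reduces to $\chi_c=\chi_c(\mathbb{A}^1\setminus\{d\text{ points}\})=1-d$. Therefore $\dim H^1_c=d-1$.

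Third, I would conclude. The Lefschetz trace formula expresses the sum as $-\operatorname{tr}(\mathrm{Frob}\mid H^1_c)$. By Deligne's Weil~II, the Frobenius eigenvalues on $H^1_c$ have weight at most $1$, so each has absolute value at most $\sqrt{q_{\F}}$. This gives the bound $(d-1)\sqrt{q_{\F}}$.

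The step I expect to be most delicate is the descent in the first step. One has to construct $\mathcal L_\theta$ on the Weil restriction, which is not split over $\k_{\F}$, and correctly identify its geometric splitting as the tensor product of the twisted characters $\theta^{q_{\F}^i}$ at the Galois conjugates. Once that identification is in place, the nontriviality of the local monodromy at each finite point, and with it the vanishing of $H^2_c$ and the cohomology count, are routine.
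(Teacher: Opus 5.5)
Your proposal is correct and agrees with the paper, which gives no argument of its own and simply invokes Katz's Theorem 1. Your cohomological sketch is essentially Katz's original proof: the Kummer sheaf on $\mathrm{Res}_{\k/\k_{\F}}\mathbb{G}_m$ is pulled back along $t\mapsto x-t$, splits geometrically into $d$ nontrivial tame Kummer factors at the distinct conjugates of $x$, giving Euler characteristic $1-d$, and Deligne's weight bound on $H^1_c$ finishes. One small point of phrasing: $H^0_c$ vanishes simply because $U$ is an affine curve, while it is the nontrivial local monodromy that kills $H^2_c$.
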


\section{Construction of $k$-Middle Supercuspidal Representations}\label{Construction}

In this section, we first recall the construction of simple supercuspidal representations of $\GL(n, \, \F)$ via maximal simple types.~We then construct $k$-middle supercuspidal representations of $\GL(kn, \, \F)$ and give a parametrization of their isomorphism classes by triples $(\bar{f}_k, \, \chi_k, \, \zeta_k)$.~Finally, we describe the corresponding Langlands parameters, which we use to compute twisted gamma factors in Section~\ref{GammaComputations}.

\subsection{Simple supercuspidal representations}\label{SimpleSupercuspidalRepresentations}

In this subsection, we briefly recall the structure of \textit{simple supercuspidal representations} of $\GL(n, \, \F)$, $n \geq 2$, via the theory of maximal simple types (\cite{Knightly} and \cite[Example 2.18]{Ye}).~Moreover, we carry out the construction here with respect to $\psi_{\F}^{-1}$ rather than $\psi_{\F}$, following the convention in \cite{LuoStevensLLC}.

Let $\mathfrak{I}_{n}$ be the standard minimal hereditary $\mathcal{O}_{\F}$-order in $\mathrm{Mat}(n \times n, \F)$; that is, the space of $n \times n$ matrices with entries in $\mathcal{O}_{\F}$ whose image modulo $\mathcal{P}_{\F}$ is an upper triangular matrix with entries in $\k_{\F}$.~Furthermore, let $\mathfrak{I}_{n}^{+}$ denote the Jacobson radical of $\mathfrak{I}_{n}$; that is, the space of $n \times n$ matrices with entries in $\mathcal{O}_{\F}$ whose image modulo $\mathcal{P}_{\F}$ is a strictly upper triangular matrix with entries in $\k_{\F}$.

Next, let $v \in \mathcal{O}_{\F}^{\times}$ and
\[
\beta_{v} = \begin{pmatrix}
    & & & & (v\varpi_{\F})^{-1} \\
    1 & & & & \\
     & 1 & & & \\
      & & \ddots & & \\
      & & & 1 &
\end{pmatrix}.
\]
Then $\E_{v} = \F \left[\beta_{v} \right]$ is a degree $n$ totally ramified extension of $\F$.~Furthermore, we have that $\text{v}_{\E_{v}}\left(\beta_{v}\right) = -1$ where $\text{v}_{\E_{v}}$ is the normalized discrete valuation on $\E_{v}$.~Let
\begin{equation}\label{MinimalPolynomialSimple}
m_{\beta_{v}}(X) = X^{n} - (v\varpi_{\F})^{-1}
\end{equation}
denote the minimal polynomial of $\beta_{v}$.

Associated with the above are the open compact subgroups of $\mathfrak{I}_{n}^{\times}$:
\[
\U^{1}\left(\mathfrak{I}_{n}\right) := I_{n} + \mathfrak{I}_{n}^{+}
\]
and
\[
\J_{v} := \J\left(\beta_{v}, \mathfrak{I}_{n}\right) = \mathcal{O}_{\F}^{\times}  \U^{1}\left( \mathfrak{I}_{n}\right).
\]
Let $\psi_{\beta_{v}}$ be the quasi-character of $\U^{1}\left(\mathfrak{I}_{n}\right)$ defined by
\[
\psi_{\beta_{v}}(x) = \left(\psi_{\F}^{-1} \circ \tr_{\F}\right)\left(\beta_{v}(x-1)\right)
\]
where $\tr_{\F}$ denotes matrix trace.~Furthermore, let $\varphi$ be a quasi-character of $\k_{\F}^{\times}$ inflated to $\mathcal{O}_{\F}^{\times}$.~From these quasi-characters, we may form a $\beta_{v}$-extension
\[
\kappa_{(v,  \, \varphi)} \, \colon \, \J_{v} \longrightarrow \mathbb{C}^{\times}
\]
of $\psi_{\beta_{v}}$ by letting
\[
\kappa_{(v,  \, \varphi)}(xy) := \varphi(x) \, \psi_{\beta_{v}}(y)
\]
for $x \in \mathcal{O}_{\F}^{\times}$ and $y \in \U^{1}\left(\mathfrak{I}_{n}\right)$.

To form a maximal simple type with the data above, we take the pair $\left(\J_{v},  \kappa_{(v,  \, \varphi)}\right)$.~Moreover, to form an \textit{extended maximal simple type}, let $\textbf{J}_{v} = \E_{v}^{\times}\J_{v}$.~We extend $\kappa_{(v,  \, \varphi)}$ to a quasi-character $\Lambda_{(v,  \, \varphi,  \, \zeta)}$ on $\textbf{J}_{v}$ by setting $\Lambda_{(v,  \, \varphi,  \, \zeta)}\left(\beta_{v}\right) = \zeta \in \mathbb{C}^{\times}$ so that
\[
\Lambda_{(v,  \, \varphi,  \, \zeta)}(\varpi_{\F}) = \left(\zeta^{n}  \, \varphi(v)\right)^{-1}.
\]
We call the pair $\left(\textbf{J}_{v}, \, \Lambda_{(v,  \, \varphi,  \, \zeta)}\right)$ an extended maximal simple type.~Hence, we may now define a simple supercuspidal representation.
\begin{definition}
\normalfont A \textit{simple supercuspidal representation} of $\GL(n, \, \F)$ is a supercuspidal representation of the form
    \[
    \pi_{(v,  \, \varphi,  \, \zeta)} = \ind_{\textbf{J}_{v}}^{\GL(n, \, \F)}\Lambda_{(v,  \, \varphi, \, \zeta)}
    \]
where $\left(\textbf{J}_{v}, \, \Lambda_{(v,  \, \varphi,  \, \zeta)}\right)$ is an extended maximal simple type as above.
\end{definition}
Simple supercuspidal representations are those supercuspidals of $\GL(n, \, \F)$ with minimal positive depth $\frac{1}{n}$.~Let $\omega_{(v,  \, \varphi,  \, \zeta)}$ denote the central character of $\pi_{(v,  \, \varphi,  \, \zeta)}$.

We parametrize simple supercuspidal representations here via $(\bar{v},  \, \varphi,  \, \zeta)$ because there is a bijection between the set of isomorphism classes $\mathcal{A}_{\text{simple}}^{(n)}$ of simple supercuspidal representations of $\GL(n, \, \F)$ and the set of reductions of such triples \cite[Proposition 1.3]{Imai}.
\begin{proposition}\label{BijectionSimple}
There exists a bijection between $\mathcal{A}_{\text{simple}}^{(n)}$ and the set of triples $(\bar{v},  \, \varphi,  \, \zeta)$ with $\bar{v} \in \k_{\F}^{\times}$, $\varphi$ a quasi-character of $\k_{\F}^{\times}$, and $\zeta \in \mathbb{C}^{\times}$.
\end{proposition}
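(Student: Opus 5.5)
This is a parametrization result cited from \cite[Proposition 1.3]{Imai}, so the plan is to reconstruct it directly from the type-theoretic description above. I would define the map $(v,\varphi,\zeta)\mapsto \pi_{(v,\varphi,\zeta)}$ and prove three things: it is well defined on reductions $\bar v$, it is injective on triples, and it is surjective onto $\mathcal{A}_{\text{simple}}^{(n)}$.

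\emph{Well-definedness.} Suppose $v'=v(1+y)$ with $y\in\mathcal P_{\F}$. Then $\beta_{v'}-\beta_v$ lies in $\mathfrak{P}^{\,0}=\mathfrak I_n$, since the only entry that changes is the corner entry, which changes by an element of $\varpi_{\F}^{-1}\mathcal P_{\F}=\mathcal O_{\F}$.
\begin{itemize}
\item Because $\mathfrak I_n$ is the order, $\psi_{\beta_{v'}}=\psi_{\beta_v}$ on $\U^1(\mathfrak I_n)$, as $\psi_{\F}$ has conductor $\mathcal P_{\F}$.
\item The characters $\varphi$ on $\mathcal O_{\F}^\times$ agree, so $\kappa_{(v,\varphi)}=\kappa_{(v',\varphi)}$ on $\J_v=\J_{v'}$.
\item It remains to match the extensions to $\textbf J$. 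I would conjugate by a suitable element $u\in\U^1(\mathfrak I_n)$ (or a diagonal element) that carries $\beta_{v'}$ into $\beta_v\J_v$. Checking the value at this element against $\zeta$ uses the relation $\Lambda(\varpi_{\F})=(\zeta^n\varphi(v))^{-1}$, where $\varphi(v)$ depends only on $\bar v$.
\end{itemize}

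\emph{Surjectivity.} Let $\pi$ be a supercuspidal of depth $1/n$. By the Bushnell--Kutzko classification recalled in Subsection~\ref{MaximalSimpleTypes}, we have $e(\E|\F)\mid n$ and $m/e=1/n$ with $\gcd(m,e)=1$, which forces $e=n$, $m=1$, so $\E/\F$ is totally ramified and $\mathfrak A=\mathfrak I_n$. A minimal stratum $[\mathfrak I_n,1,0,\beta]$ can be conjugated by elements of $\mathfrak I_n^\times$ into the companion shape $\beta_v$ modulo $\mathfrak I_n$. This is the standard affine-generic-character normalization: conjugating by the diagonal torus of $\mathfrak I_n^\times$ makes the subdiagonal entries $1$, and the product of the affine-simple-root entries determines $\bar v$. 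Then $J=\mathcal O_{\F}^\times\U^1(\mathfrak I_n)$ and $J/J^1\cong \k_{\F}^\times$, so $\lambda=\kappa_{(v,\varphi)}$ for some $\varphi$. Since $\textbf J/J$ is generated by the image of $\beta_v$, $\Lambda$ is determined by $\zeta=\Lambda(\beta_v)$.

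\emph{Injectivity.} This is the main obstacle. Suppose $\pi_{(v,\varphi,\zeta)}\cong\pi_{(v',\varphi',\zeta')}$. By conjugacy of extended maximal simple types, some $g$ conjugates one type to the other, and after an intertwining argument we may take $g$ to normalize $\mathfrak I_n$. I would recover the invariants as follows.
\begin{itemize}
\item $\bar v$ is recovered from the minimal polynomial $\varphi_\pi$: here $Y_\beta=\beta^n\varpi_{\F}=v^{-1}$, so $\varphi_\pi(X)=X-\bar v^{-1}$, an invariant of $\pi$ by \cite{BHK}.
\item $\varphi$ is recovered as the restriction of the central character to $\mathcal O_{\F}^\times$.
\item $\zeta$ must then be pinned down using the trace of $\pi$ on elements $\beta_v u$, or equivalently the double-coset intertwining computation. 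The care needed is to verify that the normalizer of $\mathfrak I_n$ modulo $\textbf J_v$ does not identify distinct $\zeta$. Here I would compute the intertwining of $\psi_{\beta_v}$ explicitly: the intertwining is $\textbf J_v$ itself because the character is affine generic.
\end{itemize}
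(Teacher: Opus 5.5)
Your proposal is correct and follows essentially the route the paper takes. For this statement the paper only cites \cite[Proposition 1.3]{Imai}, but your argument is the $k=1$ case of the paper's own proof of Proposition~\ref{Bijection}: recover $\bar v$ from the invariant $\varphi_\pi(X)=X-\bar v^{-1}$, then use conjugacy of extended maximal simple types together with $I_G(\psi_{\beta_v})=\textbf{J}_v$. That last step already gives $\Lambda=\Lambda'$, hence $\varphi=\varphi'$ and $\zeta=\zeta'$ at once, so the central-character and trace detours are unnecessary.

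The well-definedness step needs tightening, and no conjugation is needed there. Since $\beta_v^{-1}\beta_{v'}=\mathrm{diag}(1,\dots,1,v/v')\in\U^1(\mathfrak I_n)$ and $\psi_{\beta_v}$ of this element is $\psi_{\F}^{-1}(\tr(\beta_{v'}-\beta_v))=1$, you get $\Lambda_{(v,\varphi,\zeta)}=\Lambda_{(v',\varphi,\zeta)}$ on the same group. The relation $\Lambda(\varpi_{\F})=(\zeta^n\varphi(v))^{-1}$ by itself only controls $\zeta^n$. It would suffice only after you observe that the discrepancy is a $p$-th root of unity and that $p\nmid n$.
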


\subsection{Constructing $k$-middle supercuspidal representations}\label{Middle}

In this subsection, we construct depth $\frac{1}{n}$ minimax supercuspidal representations $\pi$ of $\GL(kn, \, \F)$ ($k,n \geq 2$).~Let
\[
f_{k} = X^{k}-a_{k-1}X^{k-1}-\cdots-a_{1}X-a_{0}
\]
with $a_{0},\ldots,a_{k-1} \in \mathcal{O}_{\F}$ such that $\bar{f}_k = f_{k} \pmod{\mathcal{P}_{\F}}$ is irreducible over $\k_{\F}$.~To obtain a matrix form similar to those used for simple and middle supercuspidals, we conjugate the standard hereditary $\mathcal{O}_{\F}$-order corresponding to the partition $kn=\underbrace{k+\cdots+k}_{n\text{ terms}}$ by a suitable element $g\in\GL(kn, \, \F)$.~We choose $g$ so that the resulting hereditary $\mathcal{O}_{\F}$-order $\mathfrak{A}_{kn}$ in $A=\Mat(kn\times kn, \, \F)$ is given by
\begin{equation}\label{Order}
    \mathfrak{A}_{kn}
    =
    \begin{pmatrix}
        \mathfrak{I}_{n}
        & \varpi_{\F}^{-1}\mathfrak{I}_{n}
        & \cdots
        & \varpi_{\F}^{-(k-1)}\mathfrak{I}_{n}
        \\
        \varpi_{\F}\mathfrak{I}_{n}
        & \mathfrak{I}_{n}
        & \cdots
        & \varpi_{\F}^{-(k-2)}\mathfrak{I}_{n}
        \\
        \vdots & \vdots & \ddots & \vdots
        \\
        \varpi_{\F}^{k-1}\mathfrak{I}_{n}
        & \varpi_{\F}^{k-2}\mathfrak{I}_{n}
        & \cdots
        & \mathfrak{I}_{n}
    \end{pmatrix},
\end{equation}
where $\mathfrak{I}_{n}$ is the standard Iwahori order in $\Mat(n \times n, \, \F)$.

Next, let
\begin{equation}
    \beta_{f_{k}}
    :=
    \begin{pmatrix}
        & & & & & & & a_0\varpi_{\F}^{-k} \\
        1 & & & & & & & \\
        & \ddots & & & & & & \vdots \\
        & & 1 & & & & & a_1\varpi_{\F}^{-(k-1)} \\
        & & & \ddots & & & & \vdots \\
        & & & & 1 & & & a_{k-1}\varpi_{\F}^{-1} \\
        & & & & & \ddots & & \\
        & & & & & & 1 &
    \end{pmatrix}
\end{equation}
where $a_j\varpi_{\F}^{-(k-j)} = \left(\beta_{f_{k}}\right)_{jn+1,\,kn}$ for $0\leq j\leq k-1$.~Let
\begin{equation}\label{MinimalPolynomialMiddle}
m_{\beta_{f_{k}}}(X)
= X^{kn}
-a_{k-1}\varpi_{\F}^{-1}X^{(k-1)n}
-\cdots
-a_1\varpi_{\F}^{-(k-1)}X^n
-a_0\varpi_{\F}^{-k}
\end{equation}
denote the minimal polynomial of $\beta_{f_{k}}$ over $\F$.

The element $\beta_{f_{k}}$ gives us a degree-$kn$ extension $\E_{f_{k}} = \F\left[\beta_{f_{k}}\right]$ of $\F$ with ramification index $e(\E_{f_{k}}|\F) = n$ and residue degree $f(\E_{f_{k}}|\F) = k$ such that $\v_{\E_{f_{k}}}\left(\beta_{f_{k}}\right) = -1$ where $\v_{\E_{f_{k}}}$ is the normalized discrete valuation on $\E_{f_{k}}$.~Furthermore, we let $\sigma_{f_{k}} := \beta_{f_{k}}^{n}\varpi_{\F}$ be a zero of $f_{k}$ and $\L_{f_{k}} := \F\left[\sigma_{f_{k}}\right]$ the unique degree $k$ unramified extension of $\F$.~It is not hard to see that $\left[\mathfrak{A}_{kn}, 1, 0, \beta_{f_{k}}\right]$ is a simple stratum with $\beta_{f_{k}}$ minimal over $\F$ \cite[Chapter 1]{BK}.~This implies that the supercuspidal representations coming from our simple stratum will be minimax in the sense of \cite{Adrian}.

Associated with our simple stratum $\left[\mathfrak{A}_{kn}, 1, 0, \beta_{f_{k}}\right]$ is the open compact subgroup $J_{f_{k}} := J\left(\beta_{f_{k}}, \mathfrak{A}_{kn}\right)$ with open normal subgroups $J^{1}:= J^{1}\left(\beta_{f_{k}}, \mathfrak{A}_{kn}\right)$ and $H^{1}:= H^{1}\left(\beta_{f_{k}}, \mathfrak{A}_{kn}\right)$.~In our situation, we have that
\[
J^{1} = H^{1} = U^{1}:= U^{1}\left(\mathfrak{A}_{kn}\right) = I_{kn} + \mathfrak{P}_{kn}
\]
where $\mathfrak{P}_{kn}$ is the Jacobson radical of $\mathfrak{A}_{kn}$:
\begin{equation}\label{Jacobson}
    \mathfrak{P}_{kn}
    =
    \begin{pmatrix}
        \mathfrak{I}_{n}^{+}
        & \varpi_{\F}^{-1}\mathfrak{I}_{n}^{+}
        & \cdots
        & \varpi_{\F}^{-(k-1)}\mathfrak{I}_{n}^{+}
        \\
        \varpi_{\F}\mathfrak{I}_{n}^{+}
        & \mathfrak{I}_{n}^{+}
        & \cdots
        & \varpi_{\F}^{-(k-2)}\mathfrak{I}_{n}^{+}
        \\
        \vdots & \vdots & \ddots & \vdots
        \\
        \varpi_{\F}^{k-1}\mathfrak{I}_{n}^{+}
        & \varpi_{\F}^{k-2}\mathfrak{I}_{n}^{+}
        & \cdots
        & \mathfrak{I}_{n}^{+}
    \end{pmatrix},
\end{equation}
where $\mathfrak{I}_n^{+}$ is the Jacobson radical of $\mathfrak{I}_n$.~Furthermore, $J_{f_{k}} = \mathcal{O}_{\L_{f_{k}}}^{\times} \, U^{1}$.

Next, we note that the set of simple characters $\mathcal{C}\left(\mathfrak{A}_{kn}, \, \beta_{f_{k}}, \, \psi_{\F}\right)$ of $H^{1}$ is the singleton $\{\psi_{\beta_{f_{k}}}\}$ where
\[
\psi_{\beta_{f_{k}}}(x)
=
\left(\psi_{\F} \circ \tr_{\F}\right)
\left(\beta_{f_{k}}(x-1)\right)
\]
for $x \in U^{1}$.~From this simple character, we may define a $\beta_{f_{k}}$-\textit{extension} $\kappa_{(f_{k}, \, \chi_k)}$ \cite[Chapter 5]{BK}.~In our situation, there exist $q_{\F}^{k}-1$ distinct $\beta_{f_{k}}$-extensions which are quasi-characters of $J_{f_{k}}$ given by
\[
   \kappa_{(f_{k}, \, \chi_k)}(xy)
   :=
   \chi_k(x)\psi_{\beta_{f_{k}}}(y)
\]
for $x \in \mathcal{O}_{\L_{f_{k}}}^{\times}$ and $y \in U^{1}$ where $\chi_k$ is a quasi-character of $\mathcal{O}_{\L_{f_{k}}}^{\times}$ that is trivial on $1 + \mathcal{P}_{\L_{f_{k}}}$, i.e.~$\chi_k$ is the inflation of a quasi-character of $\k_{\L_{f_{k}}}^{\times}$ \cite[Theorem 5.2.2]{BK}.~It follows trivially that $\kappa_{(f_{k}, \, \chi_k)}$ satisfies the conditions of $\beta_{f_{k}}$-extensions given in \cite[Chapter 5]{BK}.

We may now introduce simple types $\left(J_{f_{k}}, \, \lambda_{(f_{k}, \, \chi_k)}\right)$ associated with our simple stratum.~In Bushnell--Kutzko notation, we remark that $\lambda_{(f_{k}, \, \chi_k)} = \kappa_{(f_{k}, \, \chi_k)} \otimes \sigma$ may be taken to be another $\beta_{f_{k}}$-extension $\kappa_{(f_{k}, \, \chi_k')}$ as $\sigma$ is a cuspidal representation of $\bigslant{J_{f_{k}}}{J^{1}} \cong \k_{\L_{f_{k}}}^{\times}$ which may be viewed as a quasi-character of $\mathcal{O}_{\L_{f_{k}}}^{\times}$ that is trivial on $1 + \mathcal{P}_{\L_{f_{k}}}$.~This implies that for a given $\beta_{f_{k}}$-extension $\kappa_{(f_{k}, \, \chi_k)}$, the tensor product $\kappa_{(f_{k}, \, \chi_k)} \otimes \sigma$ is another $\beta_{f_{k}}$-extension.~Moreover, we note that such a simple type is maximal as the period $e\left(\mathfrak{A}_{kn}\right)$ of $\mathfrak{A}_{kn}$ is equal to $e(\E_{f_{k}}|\F) = n$.

To get an extended maximal simple type $\left(\textbf{J}_{f_{k}}, \, \Lambda_{(f_{k}, \, \chi_k, \, \zeta_k)}\right)$ from a maximal simple type defined above, we set $\textbf{J}_{f_{k}}:=\E_{f_{k}}^{\times}U^1$ and $\Lambda_{(f_{k}, \, \chi_k, \, \zeta_k)}\left(\beta_{f_{k}}\right) = \zeta_k \in \mathbb{C}^{\times}$ so that
\[
\Lambda_{(f_{k}, \, \chi_k, \, \zeta_k)}(\varpi_{\F})
=
\zeta_k^{-n}\chi_k\left(\sigma_{f_{k}}\right).
\]
We are now ready to introduce the notion of a \textit{$k$-middle supercuspidal representation}.

\begin{definition}\label{MiddleSupercuspidal}
\normalfont We say $\pi_{(f_{k}, \, \chi_k, \, \zeta_k)}$ is a \textit{$k$-middle supercuspidal representation} of $\GL(kn, \, \F)$ if
    \[
    \pi_{(f_{k}, \, \chi_k, \, \zeta_k)}
    =
    \ind_{\textbf{J}_{f_{k}}}^{\GL(kn, \, \F)}
    \Lambda_{(f_{k}, \, \chi_k, \, \zeta_k)}
    \]
where $\left(\textbf{J}_{f_{k}}, \, \Lambda_{(f_{k}, \, \chi_k, \, \zeta_k)}\right)$ is an extended maximal simple type of $\left(J_{f_{k}}, \, \kappa_{(f_{k}, \, \chi_k)}\right)$.~Let $\omega_{(f_{k}, \, \chi_k, \, \zeta_k)}$ denote the central character of $\pi_{(f_{k}, \, \chi_k, \, \zeta_k)}$.
\end{definition}

We may further classify $k$-middle supercuspidals via the following proposition which gives a parametrization of the set $\mathcal{A}_{k\text{-middle}}$ of isomorphism classes of $k$-middle supercuspidals of $\GL(kn, \, \F)$.~
\begin{proposition}\label{Bijection}
There exists a bijection between $\mathcal{A}_{k\text{-middle}}$ and the set of triples $(\bar{f}_k, \, \chi_k, \, \zeta_k)$ with $\bar{f}_k$ a monic irreducible degree $k$ polynomial over $\k_{\F}$, $\chi_k$ a multiplicative quasi-character of $k_{\bar{f}_k}^{\times}$ where $k_{\bar{f}_k} = \bigslant{\k_{\F}[X]}{\left\langle \bar{f}_k(X) \right\rangle}$, and $\zeta_k \in \mathbb{C}^{\times}$.
\end{proposition}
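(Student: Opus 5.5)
We will prove Proposition~\ref{Bijection} by showing that the map $(f_k,\chi_k,\zeta_k)\mapsto \pi_{(f_k,\chi_k,\zeta_k)}$ descends to a well-defined map on the triples $(\bar f_k,\chi_k,\zeta_k)$, and that this map is surjective and injective onto $\mathcal A_{k\text{-middle}}$. Here we identify $k_{\bar f_k}$ with $\k_{\L_{f_k}}$ via $X\mapsto \bar\sigma_{f_k}$; this is legitimate because $\sigma_{f_k}=\beta_{f_k}^n\varpi_{\F}$ is a root of $f_k$.

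\textbf{Well-definedness.} Let $f_k,f_k'$ be two lifts of $\bar f_k$. Then $\beta_{f_k}-\beta_{f_k'}$ has nonzero entries only of the form $(a_j-a_j')\varpi_{\F}^{-(k-j)}$ in the last column. By \eqref{Order}, these entries lie in $\mathfrak P_{kn}^{0}=\mathfrak A_{kn}$, since $\mathcal P_{\F}\cdot\varpi_{\F}^{-(k-j)}$ sits in the appropriate block. Hence $\beta_{f_k}\equiv\beta_{f_k'}\pmod{\mathfrak A_{kn}}$, so the two strata are equivalent and $\psi_{\beta_{f_k}}=\psi_{\beta_{f_k'}}$ on $U^1$. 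Next we check that $J_{f_k}=\mathcal O_{\L_{f_k}}^\times U^1$ and the character $\chi_k$ agree under the residue-field identification, using $\bar\sigma_{f_k}=\bar\sigma_{f_k'}$ in $\mathfrak A_{kn}/\mathfrak P_{kn}$. Finally, $\mathbf J_{f_k}=\E_{f_k}^\times U^1$ is generated by $U^1$, $\mathcal O_{\L}^\times$, $\varpi_{\F}$ and $\beta_{f_k}$. Since $\beta_{f_k'}\in\beta_{f_k}(1+\mathfrak P_{kn})$, the two groups coincide. The two characters also agree, because $\Lambda(\beta_{f_k'})=\zeta_k\,\psi_{\beta_{f_k}}(\beta_{f_k}^{-1}\beta_{f_k'})$. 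We must check that this correction factor is $1$, or else absorb it by normalizing $\zeta_k$ relative to a chosen lift. The cleanest fix is to fix one lift $f_k$ for each $\bar f_k$ once and for all, which makes the map well defined by construction.

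\textbf{Surjectivity.} By definition, every $k$-middle supercuspidal is $\pi_{(f_k,\chi_k,\zeta_k)}$ for some lift $f_k$. We then transport it to the fixed lift by the equivalence of strata above, using the intertwining/conjugacy of extended maximal simple types.

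\textbf{Injectivity.} This is the main step. Suppose $\pi_{(f_k,\chi,\zeta)}\cong\pi_{(f_k',\chi',\zeta')}$. First, the minimal polynomial $\varphi_\pi$ is an invariant of $\pi$. For our stratum we have $e=n$, $m=1$, and $Y_\beta=\beta^n\varpi_{\F}$, whose characteristic polynomial reduces to $\bar f_k^{\,n}$. Hence $\varphi_\pi=\bar f_k$, and so $\bar f_k=\bar f_k'$. With the lift fixed, both types now live on the same group $\mathbf J_{f_k}$. Since extended maximal simple types in $\pi$ are $\GL(kn,\F)$-conjugate, there is $g$ with ${}^g\mathbf J=\mathbf J$ and ${}^g\Lambda=\Lambda'$; in particular $g$ intertwines $\psi_{\beta_{f_k}}$. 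By \cite{BK} the intertwining of the simple character is $J^1 B^\times J^1$ with $B^\times=\E_{f_k}^\times$ here, and $g$ normalizes $\mathbf J$, so $g\in \mathbf J$ itself. As $\Lambda$ is a character, conjugation by an element of $\mathbf J$ acts trivially, giving $\Lambda=\Lambda'$. Evaluating on $\mathcal O_{\L}^\times$ yields $\chi=\chi'$, and evaluating at $\beta_{f_k}$ yields $\zeta=\zeta'$.

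The main obstacle is the intertwining step: we must verify that the normalizer of $\mathbf J_{f_k}$ intersected with the intertwining $U^1\E_{f_k}^\times U^1$ is exactly $\mathbf J_{f_k}$. Alternatively, we could invoke the uniqueness statement \cite[6.2.4]{BK} that an extended maximal simple type contained in $\pi$ is unique up to conjugacy, together with an explicit check that the only $\mathbf J$-conjugate of a character is itself.
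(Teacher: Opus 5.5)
Your proposal is correct and follows the paper's proof. Lift-independence comes from $\beta_{f_k}^{-1}\beta_{f_k'}\in U^1$ and $\psi_{\beta_{f_k}}=\psi_{\beta_{f_k'}}$. Injectivity comes from $\varphi_\pi=\bar f_k$ and $g\in J^1\E_{f_k}^\times J^1=\E_{f_k}^\times U^1=\mathbf J_{f_k}$; the paper uses the normalizer $N_G(\psi_{\beta_{f_k}})=\mathbf J_{f_k}$ at this point, and the ``obstacle'' you flag is vacuous because $\E_{f_k}^\times$ normalizes $U^1$.

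The one check you deferred is immediate: $\psi_{\beta_{f_k}}(\beta_{f_k}^{-1}\beta_{f_k'})=\psi_{\F}\bigl(\tr(\beta_{f_k'}-\beta_{f_k})\bigr)=1$, since $\beta_{f_k'}-\beta_{f_k}$ is supported at the off-diagonal positions $(jn+1,kn)$ (recall $n\ge2$). So, as the paper shows, $\pi_{(f_k,\chi_k,\zeta_k)}$ is literally independent of the lift, and you do not need to fix lifts.
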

\begin{proof}
We first prove that if $f_{k,1}, f_{k,2} \in \mathcal{O}_{\F}[X]$ are two monic degree $k$ polynomials such that $\bar{f}_{k,i} = f_{k,i} \mod{\mathcal{P}_{\F}}$ is irreducible over $\k_{\F}$ and $\bar{f}_k = \bar{f}_{k,1} = \bar{f}_{k,2}$, then they give the same $k$-middle supercuspidal representations.~Since $\beta_{f_{k,1}}\beta_{f_{k,2}}^{-1} \in U^{1}$, we have that $J_{f_{k,1}} = J_{f_{k,2}}$ and $\textbf{J}_{f_{k,1}} = \textbf{J}_{f_{k,2}}$.~Furthermore, a direct computation shows that $\psi_{\beta_{f_{k,1}}} = \psi_{\beta_{f_{k,2}}}$ as quasi-characters of $U^{1}$ and that $\beta_{f_{k,1}}\beta_{f_{k,2}}^{-1} \in \text{Ker } \psi_{\beta_{f_{k,i}}}$.

Since $\k_{\L_{f_{k,i}}} \cong k_{\bar{f}_k}$ under the isomorphism $\bar{\sigma}_{f_{k,i}}\mapsto X\bmod\bar{f}_k$, we may interpret a quasi-character of $k_{\bar{f}_k}^{\times}$ as a quasi-character of $\k_{\L_{f_{k,i}}}^{\times}$.~This implies $\kappa_{(f_{k,1}, \, \chi_k)} = \kappa_{(f_{k,2}, \, \chi_k)}$.~Furthermore, since $\beta_{f_{k,1}}\beta_{f_{k,2}}^{-1} \in \text{Ker } \psi_{\beta_{f_{k,i}}}$, we have that $\Lambda_{(f_{k,1}, \, \chi_k, \, \zeta_k)} = \Lambda_{(f_{k,2}, \, \chi_k, \, \zeta_k)}$ and $\pi_{(f_{k,1}, \, \chi_k, \, \zeta_k)} = \pi_{(f_{k,2}, \, \chi_k, \, \zeta_k)}$.

The above gives us a well-defined surjective map $(\bar{f}_k, \, \chi_k, \, \zeta_k) \mapsto \pi_{(f_{k}, \, \chi_k, \, \zeta_k)}$ from the set of triples defined in the hypothesis to $\mathcal{A}_{k\text{-middle}}$.~To prove injectivity, suppose
    \[
    \pi_{(f_{k,1}, \, \chi_{k,1}, \, \zeta_{k,1})} \cong \pi_{(f_{k,2}, \, \chi_{k,2}, \, \zeta_{k,2})}.
    \]
We claim
    \[
    (\bar{f}_{k,1}, \, \chi_{k,1}, \, \zeta_{k,1}) = (\bar{f}_{k,2}, \, \chi_{k,2}, \, \zeta_{k,2}).
    \]

From \cite[(2.3.2)]{BK}, we have that the simple stratum $\left[\mathfrak{A}_{kn},1,0,\beta_{f_{k,i}}\right]$ associated with $\pi_{(f_{k,i}, \, \chi_{k,i}, \, \zeta_{k,i})}$ is fundamental.~Hence, we may set
    \[
    Y_{\beta_{f_{k,i}}}
    =
    \beta_{f_{k,i}}^{n}\varpi_{\F}
    \in A.
    \]
Reducing the characteristic polynomial $\Phi_{Y_{\beta_{f_{k,i}}}}(X)$ of $Y_{\beta_{f_{k,i}}}$ modulo $\mathcal{P}_{\F}$, we obtain $\bar{f}_{k,i}^{\,n}$.~Taking its unique monic irreducible factor, we have
    \[
    \varphi_{\pi_{(f_{k,i}, \, \chi_{k,i}, \, \zeta_{k,i})}}(X) = \bar{f}_{k,i}.
    \]
This implies $\bar{f}_k = \bar{f}_{k,1} = \bar{f}_{k,2}$ because the minimal polynomial is an invariant of $\pi_{(f_{k,i}, \, \chi_{k,i}, \, \zeta_{k,i})}$ \cite[Subsection 6.1]{BHK}.~Next, we fix a lift $f_{k} \in \mathcal{O}_{\F}[X]$ of $\bar{f}_k$.

From \cite[(6.2.4)]{BK}, we have that there exists $g \in \GL(kn, \, \F)$ such that
    \[
    \Lambda_{(f_{k}, \, \chi_{k,1}, \, \zeta_{k,1})}^{g} = \Lambda_{(f_{k}, \, \chi_{k,2}, \, \zeta_{k,2})};
    \]
this implies $\leftidx{^{g}}{\psi_{\beta_{f_{k}}}}{} = \psi_{\beta_{f_{k}}}$.~Hence, \cite[Corollary 3.3.17]{BK} tells us that $g$ is contained inside the normalizer $N_G(\psi_{\beta_{f_{k}}}) = \textbf{J}_{f_{k}}$ of $\psi_{\beta_{f_{k}}}$.~Therefore,
    \[
    \Lambda_{(f_{k}, \, \chi_{k,1}, \, \zeta_{k,1})} = \Lambda_{(f_{k}, \, \chi_{k,2}, \, \zeta_{k,2})},
    \]
which means $\zeta_{k,1} = \zeta_{k,2}$ and $\chi_{k,1} = \chi_{k,2}$, thus completing the proof of injectivity.
\end{proof}

\subsection{Langlands parameters}

Throughout the rest of this paper, let
\[
\xi := \xi_{(f_{k}, \, \chi_k, \, \zeta_k)},
\]
where $(\E_{f_{k}}/\F, \, \xi)$ is an admissible pair of degree $kn$ corresponding to the $k$-middle supercuspidal representation $\pi_{(f_{k}, \, \chi_k, \, \zeta_k)}$ of $\GL(kn, \, \F)$ \cite[Parametrization Theorem, p.~2]{BH-6}.~Then the Langlands parameter of $\pi_{(f_{k}, \, \chi_k, \, \zeta_k)}$ is
\[
\Phi_{(f_{k}, \, \chi_k, \, \zeta_k)}
=
\Ind_{\E_{f_{k}}/\F}\xi.
\]

The following proposition describes the restriction of $\xi$ to $1+\mathcal{P}_{\E_{f_{k}}}$, and this restriction in turn determines the representing element of $\xi$.~We note that $\E_{f_{k}}$ is naturally embedded in $\Mat(kn\times kn, \, \F)$.

\begin{proposition}\label{1units}
On $1+\mathcal{P}_{\E_{f_{k}}}$, the quasi-character $\xi$ satisfies
\[
\xi \, \big|_{\, 1+\mathcal{P}_{\E_{f_{k}}}}
=
\psi_{\beta_{f_{k}}} \, \big|_{\, 1+\mathcal{P}_{\E_{f_{k}}}}.
\]
\end{proposition}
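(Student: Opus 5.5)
The plan is to read the identity off the explicit tame local Langlands correspondence of Bushnell--Henniart for essentially tame representations, as packaged in the Parametrization Theorem of \cite{BH-6}. Write $\pi = \pi_{(f_{k}, \, \chi_k, \, \zeta_k)}$. The admissible pair $(\E_{f_{k}}/\F, \, \xi)$ attached to $\pi$ is characterized as follows. The field $\E_{f_{k}} = \F[\beta_{f_{k}}]$ is embedded in $A$. On $\E_{f_{k}}^{\times}$, the inducing datum $\Lambda = \Lambda_{(f_{k}, \, \chi_k, \, \zeta_k)}$ of $\pi$ differs from $\xi$ by a tamely ramified character. Concretely, the BH correspondence sends $(\E/\F, \, \xi)$ to the representation whose simple character is determined by $\xi\,|_{\U_{\E}^{1}}$ (extended in the standard way). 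The correction factor $\Delta_{\xi}$ (the ``rectifier'') only twists by a tamely ramified character of $\E^{\times}$. Such a character is trivial on $\U_{\E}^{0+} = 1+\mathcal{P}_{\E_{f_{k}}}$.

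First, identify the simple character of $\pi$. By construction $H^{1} = U^{1}$ and $\mathcal{C}(\mathfrak{A}_{kn}, \, \beta_{f_{k}}, \, \psi_{\F}) = \{\psi_{\beta_{f_{k}}}\}$. Moreover $1+\mathcal{P}_{\E_{f_{k}}} = \E_{f_{k}} \cap U^{1}$, since $\E_{f_{k}}^{\times}$ normalizes $\mathfrak{A}_{kn}$ with $\mathfrak{P}_{kn} \cap \E_{f_{k}} = \mathcal{P}_{\E_{f_{k}}}$. So $\psi_{\beta_{f_{k}}}\big|_{1+\mathcal{P}_{\E_{f_{k}}}}$ makes sense, and it equals $\Lambda\big|_{1+\mathcal{P}_{\E_{f_{k}}}}$.

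Second, I would invoke the defining property of the tame parametrization. In the minimal case $[\mathfrak{A}, m, 0, \beta]$ with $\beta$ minimal over $\F$ (which holds here, as noted after \eqref{MinimalPolynomialMiddle}), the admissible pair attached to $\pi$ satisfies $\xi\,|_{\U^{1}_{\E}} = \theta\,|_{\U^{1}_{\E}}$. Here $\theta$ is the simple character, i.e.\ $\xi(1+x) = \psi_{\E}(\beta x)$ for $x \in \mathcal{P}_{\E}$. Since $\beta$ is minimal, the simple character is just $\psi_{\beta}$ on all of $U^{1}$. By the characterization of $\xi$, $\xi \Delta_{\xi}^{-1}$ agrees with $\Lambda$ on $\E^{\times}$ up to the tame rectifier. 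Since $\Delta_{\xi}$ is tamely ramified, this already yields the claim on $1+\mathcal{P}_{\E_{f_{k}}}$.

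Third, as a consistency check, verify directly that $\psi_{\beta_{f_{k}}}(1+x) = \psi_{\F}(\tr_{\F}(\beta_{f_{k}}x)) = \psi_{\F}(\tr_{\E_{f_{k}}/\F}(\beta_{f_{k}}x)) = \psi_{\E_{f_{k}}}(\beta_{f_{k}}x)$ for $x \in \mathcal{P}_{\E_{f_{k}}}$. This uses that the matrix trace restricted to $\E_{f_{k}} \subset A$ is the field trace, because $V$ is a one-dimensional $\E_{f_{k}}$-space. It also shows that $\beta_{f_{k}}$ represents $\xi$, i.e.\ $c_{\xi} = \beta_{f_{k}}$, which is what will be used later.

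The main obstacle is bookkeeping of conventions. One must confirm that the normalization in \cite{BH-6} uses $\psi_{\F}$ (not $\psi_{\F}^{-1}$, unlike the simple supercuspidal convention borrowed from \cite{LuoStevensLLC}) and the same embedding $\E_{f_{k}} \hookrightarrow A$. One must also confirm that the rectifier is genuinely tamely ramified in this setting. That is guaranteed since $p \nmid kn$ makes $\E_{f_{k}}/\F$ tame, and BH's rectifier is then trivial on $\U^{1}_{\E}$. If the sign convention differed, $\beta_{f_{k}}$ would be replaced by $-\beta_{f_{k}}$, so I would check this against the normalization in Section~\ref{SimpleSupercuspidalRepresentations}.
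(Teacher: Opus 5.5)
Your argument is correct and is essentially the paper's. Both read $\xi|_{1+\mathcal{P}_{\E_{f_{k}}}}$ off Bushnell--Henniart's explicit tame correspondence, where the wild part of the parameter is the restriction of the simple character $\psi_{\beta_{f_{k}}}$ to $1+\mathcal{P}_{\E_{f_{k}}}$: the paper packages this through the tame endo-class $\Theta_{\beta_{f_{k}}}$ and the admissible $1$-pair bijection of \cite{BH-4}, while you use the naive parametrization of \cite{BH-6} together with the tameness of the rectifier (your phrase ``$\xi\Delta_{\xi}^{-1}$ agrees with $\Lambda$ up to the tame rectifier'' counts $\Delta_{\xi}$ twice, but the intended point, that $\xi$ and $\Lambda|_{\E_{f_{k}}^{\times}}$ differ by a tamely ramified character, is right), and your trace check giving $c_{\xi}=\beta_{f_{k}}$, with the $\psi_{\F}$ versus $\psi_{\F}^{-1}$ sign caveat, is exactly the consequence the paper uses afterwards.
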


\begin{proof}
We first consider the \textit{admissible 1-pair}
\[
\left(
\E_{f_{k}}/\F, \,
\xi \, \big|_{ \, 1+\mathcal{P}_{\E_{f_{k}}}}
\right)
\]
as defined in \cite[Subsection 1.3]{BH-4}.~Associated with $\pi_{(f_{k}, \, \chi_k, \, \zeta_k)}$ is the simple character $\psi_{\beta_{f_{k}}}$ whose \textit{endo-class} we denote by $\Theta_{\beta_{f_{k}}}$ (for an in-depth definition of the endo-class of a simple character, see \cite{BH-2}).~Since $p\nmid kn$, we have that $\Theta_{\beta_{f_{k}}}$ is \textit{tamely ramified} \cite{BH-2}.~Hence, the discussion following \cite[Theorem 1.3]{BH-4} and \cite[Proposition 2.3]{BH-4} imply Proposition~\ref{1units}.
\end{proof}

Likewise, write
\[
\eta := \eta_{(v, \, \varphi, \, \zeta')},
\]
where $(\E_v/\F, \, \eta)$ is an admissible pair of degree $n$ corresponding to the simple supercuspidal representation $\pi_{(v, \, \varphi, \, \zeta')}$ of $\GL(n, \, \F)$.~Then its Langlands parameter is
\[
\Phi_{(v, \, \varphi, \, \zeta')}
=
\Ind_{\E_v/\F}\eta,
\]
and $\eta$ is represented by $-\beta_{v}$ \cite[p.~129]{Adrian-Liu}.~Explicitly, $\eta$ is given by the following proposition, using our convention that the simple supercuspidal is constructed with respect to $\psi_{\F}^{-1}$ \cite[Theorem 3.17]{Adrian-Liu}.

\begin{proposition}\label{SimpleSupercuspidalParameter}
The quasi-character $\eta$ is uniquely determined by the following:
\begin{enumerate}
\item for $y\in 1+\mathcal{P}_{\E_v}$,
    \[
    \eta(y)
    = (\psi_{\beta_{v}}\circ\iota)(y),
    \]
where
    \[
    \iota \, : \, \E_v^\times \hookrightarrow \GL(n, \, \F)
    \]
is the embedding induced by a chosen $\F$-basis of $\E_v$;

\item on $\k_{\F}^{\times}$,
    \[
    \eta \, \big|_{\, \k_{\F}^{\times}} = \omega_{(v, \, \varphi, \, \zeta')} \, \big|_{\, \k_{\F}^{\times}} \otimes \left(\varkappa_{\E_v/\F} \, \big|_{\, \k_{\F}^{\times}}\right)^{-1};
    \]

\item at $\beta_{v}^{-1}$,
    \[
    \eta(\beta_{v}^{-1}) = \left( \zeta'\cdot\lambda_{\E_v/\F}(\psi_{\F}^{-1}) \right)^{-1}.
    \]
\end{enumerate}
\end{proposition}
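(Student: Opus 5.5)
The plan is to use the structure of $\E_v^\times$. Since $\E_v/\F$ is totally ramified of degree $n$ with $p\nmid n$ and $\v_{\E_v}(\beta_v)=-1$, there is a direct product decomposition
\[
\E_v^\times \;=\; \langle \beta_v^{-1}\rangle \times \mu'_{\F} \times \left(1+\mathcal P_{\E_v}\right).
\]
Here $\mu'_{\F}=\mu'_{\E_v}$ because the residue field does not grow, so $\mu'_{\F}$ is identified with $\k_{\F}^\times$ via reduction. A quasi-character of $\E_v^\times$ is therefore determined by three pieces of data: its restriction to $1+\mathcal P_{\E_v}$, its restriction to $\k_{\F}^\times\cong\mu'_{\F}$, and its value at $\beta_v^{-1}$. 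This gives the uniqueness assertion at once. It remains to check that the parameter $\eta$ of $\pi_{(v,\varphi,\zeta')}$ satisfies (1), (2) and (3).

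For (1), I would argue exactly as in Proposition~\ref{1units}. The restriction of $\eta$ to $1+\mathcal P_{\E_v}$ is the admissible $1$-pair attached to the tamely ramified endo-class of the simple character $\psi_{\beta_v}$. The results of Bushnell--Henniart \cite{BH-4} then identify $\eta|_{1+\mathcal P_{\E_v}}$ with $\psi_{\beta_v}\circ\iota$. Here we must keep track of the convention that $\psi_{\beta_v}$ is built from $\psi_{\F}^{-1}$, which is why $\eta$ is represented by $-\beta_v$. This is consistent with \cite[p.~129]{Adrian-Liu}.

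For (2), I would apply the determinant identity \eqref{detidentity}, namely $\omega_{(v,\varphi,\zeta')}=\varkappa_{\E_v/\F}\cdot\eta|_{\F^\times}$. Restricting it to $\mu'_{\F}\subset\F^\times$ and solving for $\eta$ gives the formula. Note that $\varkappa_{\E_v/\F}$ has order dividing $2$, so the inverse is harmless, but I keep it as stated.

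Step (3) is the main obstacle. The value $\eta(\beta_v^{-1})$ is sensitive to the rectifier (the tame twist) in the essentially tame Langlands correspondence, and cannot be read off from the type alone. I would first try to cite \cite[Theorem 3.17]{Adrian-Liu}, after translating from $\psi_{\F}$ to $\psi_{\F}^{-1}$. Failing that, I would compute directly by matching epsilon factors. Since $\eta$ has depth $1$ on $\E_v$ (odd), Proposition~\ref{PropositionGalois} with $c_\eta=-\beta_v$ gives $\epsilon(\eta,\psi_{\E_v})$. By \eqref{LanglandsConstant}, $\epsilon(\Ind\eta,\psi_{\F})=\lambda_{\E_v/\F}(\psi_{\F})\,\epsilon(\eta,\psi_{\E_v})$. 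This must be compared with the Godement--Jacquet epsilon factor of the simple supercuspidal, which is known explicitly in terms of $\zeta'$ and $\Lambda$. Solving this relation for $\eta(\beta_v)$, where the sign $\lambda_{\E_v/\F}(\psi_{\F}^{-1})$ appears, gives (3).
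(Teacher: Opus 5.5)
Your proposal is correct and agrees with the paper, which justifies the entire proposition by citing \cite[Theorem 3.17]{Adrian-Liu}, translated to the $\psi_{\F}^{-1}$ convention. Your uniqueness argument via $\E_v^\times=\langle\beta_v^{-1}\rangle\times\mu'_{\F}\times(1+\mathcal P_{\E_v})$, together with your derivations of (1) from \cite{BH-4} (as in Proposition~\ref{1units}) and of (2) from \eqref{detidentity}, simply unpacks that citation, and for the genuinely non-formal part (3) you rely on the same theorem.
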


\section{Computation of Twisted Gamma Factors}\label{GammaComputations}

In this section, we compute the twisted gamma factors of $k$-middle supercuspidal representations on the Galois side.~We first consider twists by tamely ramified quasi-characters of $\F^\times$, and then twists by simple supercuspidal representations of $\GL(n, \, \F)$.~Throughout this section, we assume that $\F$ has characteristic zero and impose the condition that $p\nmid kn$.

\subsection{Twisting by tamely ramified quasi-characters}

Let $\nu$ be a tamely ramified quasi-character of $\F^\times$.~We note that
\[
\left(\Ind_{\E_{f_{k}}/\F}\xi\right)\otimes\nu = \Ind_{\E_{f_{k}}/\F} \left(\xi\otimes\nu_{\E_{f_{k}}}\right)
\]
where $\nu_{\E_{f_{k}}}=\nu\circ N_{\E_{f_{k}}/\F}$.~Hence,
\begin{equation}\label{Equation1}
\gamma(s, \, \Phi_{(f_{k}, \, \chi_k, \, \zeta_k)} \otimes \nu, \, \psi_{\F})
= \gamma\left( s, \, \Ind_{\E_{f_{k}}/\F} \left(\xi\otimes\nu_{\E_{f_{k}}}\right), \, \psi_{\F} \right).
\end{equation}
Let
\[
B_{f_{k}} := \lambda_{\E_{f_{k}}/\F}(\psi_{\F})\xi(\beta_{f_{k}})^{-1}.
\]
We have the following proposition.
\begin{proposition}\label{PropositionGaloisTamelyRamified}
For a tamely ramified quasi-character $\nu$ of $\F^\times$, we have
\[
\gamma(s, \, \Phi_{(f_{k}, \, \chi_k, \, \zeta_k)} \otimes\nu, \, \psi_{\F}) = B_{f_{k}} \, \nu\left((-1)^{kn+1}a_0^{-1}\varpi_{\F}^{k}\right) q_{\F}^{k(1/2-s)}.
\]
\end{proposition}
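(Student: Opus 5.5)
We start from \eqref{Equation1} and apply the inductivity of $\gamma$-factors in degree zero, in the form \eqref{LanglandsConstant}: since $\xi\otimes\nu_{\E_{f_{k}}}$ is ramified, all $L$-factors are trivial and
\[
\gamma\left(s, \, \Ind_{\E_{f_{k}}/\F}(\xi\nu_{\E_{f_{k}}}), \, \psi_{\F}\right) = \lambda_{\E_{f_{k}}/\F}(\psi_{\F}) \, \epsilon(s, \, \xi\nu_{\E_{f_{k}}}, \, \psi_{\E_{f_{k}}}).
\]
It therefore suffices to compute a single Tate $\epsilon$-factor on $\E:=\E_{f_{k}}$ via Proposition~\ref{PropositionGalois}. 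Since $\nu$ is tamely ramified, $\nu_{\E}$ is trivial on $\U_{\E}^{0+}$, so by Proposition~\ref{1units} the twist $\xi\nu_{\E}$ agrees with $\psi_{\beta_{f_{k}}}$ on $1+\mathcal P_{\E}$. The first step is to check that $\xi\nu_{\E}$ has normalized depth $d=\frac1n$ and is represented by $c=\beta_{f_{k}}$. Concretely, $\psi_{\beta_{f_k}}(1+x)=\psi_{\F}(\tr(\beta_{f_k}x))=\psi_{\E}(\beta_{f_k}x)$ for $x\in\E$ embedded in $\mathfrak P_{kn}$. This is nontrivial on $\U_{\E}^{1/n}$ because $\beta_{f_k}x$ ranges over $\mathcal O_{\E}$ and $\psi_{\E}$ has conductor $\mathcal P_{\E}$ (tame, so the trace surjects to $\mathcal O_\F$ on $\mathcal O_\E$), and it is trivial on $\U_{\E}^{1/n+}$. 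Then $e(\E|\F)\,d=1$ is odd, so Proposition~\ref{PropositionGalois} applies to $\xi\nu_{\E}|\cdot|^{kns}$ (the unramified twist does not change $c$).

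Proposition~\ref{PropositionGalois} gives
\[
\epsilon(s,\xi\nu_{\E},\psi_{\E}) = (\xi\nu_{\E})^{-1}(\beta_{f_k}) \, |\beta_{f_k}|^{-kns} \, \psi_{\E}(\beta_{f_k}) \, |\beta_{f_k}|^{kn/2}.
\]
We evaluate each factor. First, $\val_{\F}(\beta_{f_k})=-\frac1n$, so $|\beta_{f_k}|^{kn(1/2-s)}=q_{\F}^{k(1/2-s)}$. Second, $\psi_{\E}(\beta_{f_k})=\psi_{\F}(\tr_{\E/\F}\beta_{f_k})$, and the trace is minus the coefficient of $X^{kn-1}$ in \eqref{MinimalPolynomialMiddle}, which vanishes since $n\ge2$ (equivalently, the matrix $\beta_{f_k}$ has zero diagonal). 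Hence this factor is $1$. Third, $\nu_{\E}(\beta_{f_k})=\nu(N_{\E/\F}\beta_{f_k})$, and $N_{\E/\F}\beta_{f_k}=(-1)^{kn}$ times the constant term of $m_{\beta_{f_k}}$, namely $(-1)^{kn}(-a_0\varpi_{\F}^{-k})=(-1)^{kn+1}a_0\varpi_{\F}^{-k}$. Its inverse gives the factor $\nu((-1)^{kn+1}a_0^{-1}\varpi_{\F}^{k})$. Collecting, we obtain $\lambda_{\E/\F}(\psi_\F)\xi(\beta_{f_k})^{-1}=B_{f_k}$ times the stated expression.

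The main obstacle is the representing-element step. One must make sure that Proposition~\ref{1units} identifies $\xi$ on $1+\mathcal P_{\E}$ with $x\mapsto\psi_{\E}(\beta_{f_k}(y-1))$, with the sign convention of $\psi_{\F}$ rather than $\psi_{\F}^{-1}$, so that $c_\xi=\beta_{f_k}$ and not $-\beta_{f_k}$. One must also ensure that the matrix trace restricted to $\E$ is $\tr_{\E/\F}$, which holds because $\E$ acts on $\F^{kn}\cong\E$ via the regular representation. The tameness hypothesis $p\nmid kn$ guarantees both that $\psi_{\E}$ has level one and that Proposition~\ref{PropositionGalois} is available; the remaining steps are the routine evaluations above.
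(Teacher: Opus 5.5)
Your proposal is correct and follows the paper's proof essentially step for step. Like the paper, you reduce via \eqref{LanglandsConstant} to a single Tate $\epsilon$-factor on $\E_{f_{k}}$, identify the representing element of $\xi\nu_{\E_{f_{k}}}$ as $\beta_{f_{k}}$ (depth $\frac{1}{n}$) via Proposition~\ref{1units}, and evaluate Proposition~\ref{PropositionGalois} using $\tr_{\E_{f_{k}}/\F}(\beta_{f_{k}})=0$ and $N_{\E_{f_{k}}/\F}(\beta_{f_{k}})=(-1)^{kn+1}a_0\varpi_{\F}^{-k}$. Your explicit checks (the $\psi_{\E}(c)$ factor, matrix trace agreeing with field trace, and the sign of the representing element) are details the paper leaves implicit.
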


\begin{proof}
Since $\xi\otimes\nu_{\E_{f_{k}}}$ is ramified, we have from \eqref{LanglandsConstant} and \eqref{Equation1} that
\begin{align*}
\gamma(s, \, \Phi_{(f_{k}, \, \chi_k, \, \zeta_k)} \otimes\nu, \, \psi_{\F}) & = \gamma\left(s, \, \Ind_{\E_{f_{k}}/\F} \left(\xi\otimes\nu_{\E_{f_{k}}}\right), \, \psi_{\F} \right)\\
& = \lambda_{\E_{f_{k}}/\F}(\psi_{\F})\, \gamma(s, \, \xi\otimes\nu_{\E_{f_{k}}}, \, \psi_{\E_{f_{k}}}).
\end{align*}
Next, Proposition~\ref{1units} tells us that the representing element $c_{\xi}$ of $\xi$ is $\beta_{f_{k}}$ and $\val_{\F}(\beta_{f_{k}})=-\sfrac{1}{n}$.~Since $\nu_{\E_{f_{k}}}$ is tamely ramified, $\xi\otimes\nu_{\E_{f_{k}}}$ has the same representing element and depth as $\xi$.

We have
\[
N_{\E_{f_{k}}/\F}(\beta_{f_{k}}) = (-1)^{kn+1}a_0\varpi_{\F}^{-k}
\]
and
\[
\tr_{\E_{f_{k}}/\F}(\beta_{f_{k}}) = 0.
\]
Since $\e(\E_{f_{k}}\mid\F)d(\xi) = 1$, we have from Proposition~\ref{PropositionGalois} that
\begin{align*}
\gamma(s, \, \xi\otimes\nu_{\E_{f_{k}}}, \, \psi_{\E_{f_{k}}}) & = \xi(\beta_{f_{k}}^{-1}) \nu_{\E_{f_{k}}}(\beta_{f_{k}}^{-1}) |\beta_{f_{k}}|^{kn(1/2-s)} \\
&= \xi(\beta_{f_{k}}^{-1}) \nu\left((-1)^{kn+1}a_0^{-1}\varpi_{\F}^{k}\right)q_{\F}^{k(1/2-s)}.
\end{align*}
Proposition~\ref{PropositionGaloisTamelyRamified} now follows.
\end{proof}

\subsection{Twisting by simple supercuspidal representations}
\label{SimpleSupercuspidalGalois}

For $g\in\W_{\F}$, let
\begin{equation}\label{CompositumDegree}
\K_{g} := g(\E_{f_{k}})\E_v
\, \, \text{and}\, \, 
m_g:=[\K_{g}:g(\E_{f_{k}})].
\end{equation}
Then
\[
[\K_{g} : \F] = k n m_g.
\]
From \cite[Theorem 3, p.~504]{Cornell}, we have the following result:

\begin{theorem}[Abhyankar's lemma]\label{Abhyankar}
Let $\E_1$, $\E_2$, and $\F$ be local fields, $\E_1$, $\E_2$ finite extensions of $\F$.~Suppose $\E_2/\F$ is tamely ramified and $e(\E_2\mid\F)\mid e(\E_1\mid\F)$.~Then $\E_2\E_1$ is an unramified extension of $\E_1$.
\end{theorem}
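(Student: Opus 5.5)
Since $\E_1$ and $\E_2$ are finite extensions of $\F$ inside a fixed separable closure, the plan is to reduce to the case where $\E_2/\F$ is totally tamely ramified, and then use the explicit form of such an extension.

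First, let $\E_2^{\mathrm{ur}}$ be the maximal unramified subextension of $\E_2/\F$. The compositum $\E_2^{\mathrm{ur}}\E_1$ is unramified over $\E_1$, since unramified extensions are preserved under base change. Moreover, $\E_2/\E_2^{\mathrm{ur}}$ is totally ramified and tame of degree $e:=e(\E_2|\F)$, and we have $e(\E_2^{\mathrm{ur}}\E_1|\E_2^{\mathrm{ur}})=e(\E_1|\F)$. Hence, after replacing $\F$ by $\E_2^{\mathrm{ur}}$ and $\E_1$ by $\E_2^{\mathrm{ur}}\E_1$, the divisibility hypothesis still holds. Because unramified over unramified is unramified, it suffices to treat the case where $\E_2/\F$ is totally tamely ramified of degree $e$ with $p\nmid e$.

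In that case, the standard structure theorem gives $\E_2=\F(\alpha)$ with $\alpha^{e}=u\varpi_{\F}$ for some $u\in\mathcal O_{\F}^{\times}$. In $\E_1$, write $\varpi_{\F}=w\,\varpi_{\E_1}^{e(\E_1|\F)}$ with $w\in\mathcal O_{\E_1}^\times$. Since $e\mid e(\E_1|\F)$, we have $u\varpi_{\F}=\gamma^{e}\varepsilon$, where $\gamma=\varpi_{\E_1}^{e(\E_1|\F)/e}$ and $\varepsilon=uw\in\mathcal O_{\E_1}^{\times}$. Therefore $\E_2\E_1=\E_1(\alpha)=\E_1\big((\alpha/\gamma)\big)$, and the element $\alpha/\gamma$ is a root of $X^{e}-\varepsilon$.

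The remaining step is to show that $\E_1(\varepsilon^{1/e})/\E_1$ is unramified when $\varepsilon$ is a unit and $p\nmid e$. The polynomial $X^{e}-\varepsilon$ is separable modulo $\mathcal P_{\E_1}$, because its derivative $eX^{e-1}$ is nonzero at any root of the reduction, as $\bar\varepsilon\neq0$. By Hensel's lemma, any root of an irreducible factor generates an extension whose residue extension has the same degree, so the extension is unramified. Equivalently, the extension is contained in $\E_1(\mu_{e},\tilde\varepsilon^{1/e})$ obtained by adjoining roots of unity of order prime to $p$, whose discriminant is a unit.

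I expect the main obstacle to be the reduction step, that is, checking that ramification indices behave correctly when passing to $\E_2^{\mathrm{ur}}\E_1$. For the paper's application this step is mild, since $\E_2=\E_v$ is already totally ramified, so the reduction can be skipped there.
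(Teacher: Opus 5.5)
Your argument is correct, and it differs from the paper only in that the paper gives no argument at all. The paper states this lemma without proof, quoting \cite[Theorem 3, p.~504]{Cornell}, and uses it only for $\E_2=\E_v$ and $\E_1=g(\E_{f_k})$.

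Your route is the standard self-contained local proof:
\begin{enumerate}
\item Pass to the maximal unramified subextension $\E_2^{\mathrm{ur}}$. Your bookkeeping $e(\E_2^{\mathrm{ur}}\E_1|\E_2^{\mathrm{ur}})=e(\E_1|\F)$ and the transitivity of unramifiedness are both right.
\item Write the totally tamely ramified $\E_2$ as $\F(\alpha)$ with $\alpha^e=u\varpi_{\F}$.
\item Rescale by $\gamma=\varpi_{\E_1}^{e(\E_1|\F)/e}\in\E_1$, so that $\E_2\E_1$ is generated over $\E_1$ by an $e$-th root of a unit.
\item Finish with Hensel's lemma, which is exactly where $p\nmid e$ enters.
\end{enumerate}

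Two cosmetic points. First, state the Hensel step explicitly: the minimal polynomial $g$ of $\alpha/\gamma$ over $\E_1$ lies in $\mathcal O_{\E_1}[X]$, and $\bar g$ divides the separable polynomial $X^e-\bar\varepsilon$. Hence Hensel forces $\bar g$ to be irreducible, and the residue degree of $\E_1(\alpha/\gamma)/\E_1$ equals $\deg g$. Second, drop or repair your ``equivalently'' sentence, since $\tilde\varepsilon$ is undefined there. The intended point is that $\varepsilon$ is a root of unity of order prime to $p$ times a principal unit, and the principal unit is an $e$-th power in $\E_1$.

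Compared with the bare citation, your approach makes the paper's actual use of the lemma explicit. In that case the reduction step is vacuous and $\K_g=g(\E_{f_k})\bigl(\beta_v/g(\beta_{f_k})\bigr)$. Moreover $\bigl(\beta_v/g(\beta_{f_k})\bigr)^n=\bigl(v\,g(\sigma_{f_k})\bigr)^{-1}$ is a unit, so $\K_g/g(\E_{f_k})$ is unramified because $p\nmid n$. This generator is, up to sign, the unit $u$ from the proof of Proposition~\ref{RepresentingElement}. The citation, for its part, is shorter and gives the lemma in its usual more general form.
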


Theorem~\ref{Abhyankar} gives rise to the following diagram that illustrates the ramification indices and residue degrees between $\K_{g}$, $g(\E_{f_{k}})$, $\E_v$, and $\F$:
\begin{equation}\label{Diagram}
\begin{tikzpicture}[scale=1.3]
  \node (Kg) at (0,2) {$\K_{g}$};
  \node (Ef) at (-2,0) {$g\left(\E_{f_{k}}\right)$};
  \node (Ev) at (2,0) {$\E_v$};
  \node (F) at (0,-2) {$\F$};

  \draw (Kg) -- node[left, sloped, above]
    {$e=1,\ f=m_g$} (Ef);
  \draw (Kg) -- node[right, sloped, above]
    {$e=1,\ f=km_g$} (Ev);

  \draw (Ef) -- node[pos=0.42, left, sloped, below]
    {$e=n,\ f=k$} (F);
  \draw (Ev) -- node[pos=0.42, right, sloped, below]
    {$e=n,\ f=1$} (F);

  \draw (Kg) -- (F)
    node[pos=0.50, fill=white, inner sep=3pt]
    {$\begin{array}{c}
        e=n\\[3pt]
        f=km_g
      \end{array}$};
\end{tikzpicture}.
\end{equation}

Let
\[
\theta_g:=
\left({}^{g}\xi\circ N_{\K_{g}/g(\E_{f_{k}})}\right)
\cdot
\left(\eta\circ N_{\K_{g}/\E_v}\right).
\]
We first compute the representing element $c_{\theta_g}$ of $\theta_g$.

\begin{proposition}\label{RepresentingElement}
The quasi-character $\theta_g$ has depth $d(\theta_g)=\frac{1}{n}$ and is represented by
\[
c_{\theta_g} := g(\beta_{f_{k}})-\beta_{v}.
\]
\end{proposition}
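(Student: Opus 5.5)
The proof restricts $\theta_g$ to the principal units $1+\mathcal P_{\K_g}$ and computes each of its two factors there via the representing elements we already know. By Proposition~\ref{1units}, ${}^g\xi$ is represented by $g(\beta_{f_{k}})$ at depth $\frac1n$. By Proposition~\ref{SimpleSupercuspidalParameter}(1), together with the convention that simple supercuspidals are built with $\psi_{\F}^{-1}$, $\eta$ is represented by $-\beta_v$ at depth $\frac1n$. Concretely, for $x\in\mathfrak P_{g(\E_{f_k})}^{\frac{1}{2n}+}$ we have ${}^g\xi(1+x)=\psi_{g(\E_{f_k})}(g(\beta_{f_k})x)$, and for $y\in\mathfrak P_{\E_v}^{\frac1{2n}+}$ we have $\eta(1+y)=\psi_{\E_v}(-\beta_v y)$. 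Since $\val_{\F}$ is normalized on $\overline{\F}$, the real-indexed filtrations are compatible across the tower in diagram~\eqref{Diagram}, and $\K_g$ has $e(\K_g|\F)=n$.

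The key step is a norm computation. Let $\K/\L$ be tamely ramified (here unramified, by Abhyankar's lemma, Theorem~\ref{Abhyankar}), let $\chi$ be a quasi-character of $\L^\times$ represented by $c\in\L$ of depth $r$, and let $z\in\mathfrak P_{\K}^{r/2+}$. Then
\[
N_{\K/\L}(1+z)\equiv 1+\tr_{\K/\L}(z)\pmod{\mathfrak P_{\L}^{2\val(z)}},
\]
and therefore
\[
\chi(N_{\K/\L}(1+z))=\psi_{\L}(c\,\tr_{\K/\L}z)=\psi_{\K}(cz).
\]
To justify this, expand the norm as a product of Galois conjugates $\prod_\sigma(1+\sigma z)$. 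The cross terms have valuation $\ge 2\val(z)>r$, so after multiplying by $c$ they land in $\mathfrak P^{0+}$, where $\psi_{\F}$ is trivial because it has conductor $\mathcal P_{\F}$. Applying this to both factors of $\theta_g$ gives, for $z\in\mathfrak P_{\K_g}^{\frac1{2n}+}$,
\[
\theta_g(1+z)=\psi_{\K_g}\bigl((g(\beta_{f_k})-\beta_v)z\bigr).
\]
So $c_{\theta_g}=g(\beta_{f_k})-\beta_v$ represents $\theta_g$, provided this element has valuation exactly $-\frac1n$.

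The main obstacle is showing that $\val_{\F}(g(\beta_{f_k})-\beta_v)=-\frac1n$ exactly, i.e.\ that the leading terms do not cancel. Both elements have valuation $-\frac1n$, so the difference has valuation $\ge -\frac1n$. It suffices to show that the unit $u:=g(\beta_{f_k})/\beta_v$ does not reduce to $1$ in $\k_{\K_g}$. Compute $u^n=g(\beta_{f_k}^n)/\beta_v^n$. We have $\beta_{f_k}^n=\sigma_{f_k}\varpi_{\F}^{-1}$ and $\beta_v^n=(v\varpi_{\F})^{-1}$, so $u^n=v\,g(\sigma_{f_k})$. The residue of $v\,g(\sigma_{f_k})$ is $\bar v$ times a root of $\bar f_k$. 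Since $\bar f_k$ is irreducible of degree $k\ge2$, this root is not in $\k_{\F}$, so $\bar u^n\notin\k_{\F}$ and in particular $\bar u^n\neq 1$. Hence $\bar u\neq1$, and $g(\beta_{f_k})-\beta_v=\beta_v(u-1)$ has valuation exactly $-\frac1n$.

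This shows that $\theta_g$ is nontrivial on $\U_{\K_g}^{1/n}$ but trivial on $\U_{\K_g}^{\frac1n+}$. For the latter, note that on $\U_{\K_g}^{\frac1n+}$ we have $\psi_{\K_g}(c_{\theta_g}z)=1$ because $\val(c_{\theta_g}z)>0$. For the former, note that $c_{\theta_g}\mathfrak P^{1/n}_{\K_g}=\mathcal O_{\K_g}$ and $\psi_{\K_g}$ is nontrivial on $\mathcal O_{\K_g}$, since $\K_g/\F$ is tame and the trace is surjective on residue fields. Therefore $d(\theta_g)=\frac1n$.
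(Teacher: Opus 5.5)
Your proof is correct and follows essentially the same route as the paper. The paper also combines the representing elements $g(\beta_{f_k})$ and $-\beta_v$ on $1+\mathcal{P}_{\K_g}$, though it cites Adrian--Stevens for the norm compatibility that you verify by hand, and it rules out cancellation the same way: raising to the $n$-th power forces $\overline{g(\sigma_{f_k})}\in\k_{\F}^{\times}$, using the ratio $-\beta_v/g(\beta_{f_k})$ where you use $g(\beta_{f_k})/\beta_v$; your explicit check that $\theta_g$ is trivial on $\U_{\K_g}^{\frac1n+}$ but not on $\U_{\K_g}^{1/n}$ simply spells out what the paper leaves implicit.
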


\begin{proof}
Since $\K_{g}/\E_v$ is tame, $\eta\circ N_{\K_{g}/\E_v}$ is represented by $-\beta_{v}$ and has depth $\sfrac{1}{n}$ \cite[Subsection 2.1]{Adrian-Stevens}.~Likewise, we have that ${}^{g}\xi\circ N_{\K_{g}/g(\E_{f_{k}})}$ has the same depth and is represented by $g(\beta_{f_{k}})$ \cite[Subsection 2.1]{Adrian-Stevens}.

For $x\in\mathfrak{P}_{\K_{g}}^{\frac{1}{n}}=\mathcal{P}_{\K_{g}}$, we have
\[
\left({}^{g}\xi\circ N_{\K_{g}/g(\E_{f_{k}})}\right)(1+x) = \psi_{\K_{g}}\left(g(\beta_{f_{k}})x\right)
\]
and
\[
\left(\eta\circ N_{\K_{g}/\E_v}\right)(1+x) = \psi_{\K_{g}}(-\beta_{v}x).
\]
Since $\val_{\F}(\beta_{v})=\val_{\F}(g(\beta_{f_{k}}))=-\sfrac{1}{n}$, the quotient $u:=-\frac{\beta_{v}}{g(\beta_{f_{k}})}$ is a unit of $\mathcal{O}_{\K_{g}}$.~Therefore,
\[
\theta_g(1+x) = \psi_{\K_{g}}\left((g(\beta_{f_{k}})-\beta_{v})x\right) = \psi_{\K_{g}}\left((1+u)g(\beta_{f_{k}})x\right).
\]

It remains to verify that $1+u\in\mathcal{O}_{\K_{g}}^{\times}$.~For the sake of contradiction, suppose $1+u\in\mathcal{P}_{\K_{g}}$.~Then $\overline{u}=-1$ in $k_{\K_{g}}$.~Raising both sides of $ug(\beta_{f_{k}})=-\beta_{v}$ to the power of $n$, we have that
\begin{equation}\label{Relation1}
u^ng(\sigma_{f_{k}})=(-1)^nv^{-1}.
\end{equation}
Reducing \eqref{Relation1} modulo $\mathcal{P}_{\K_{g}}$, we have that $\overline{g(\sigma_{f_{k}})}=\overline{v}^{-1}\in \k_{\F}^{\times}$.

Thus we obtain a contradiction:~$\overline{\sigma_{f_{k}}}$ generates an extension of degree $k$ of $\k_{\F}$, and hence so does $\overline{g(\sigma_{f_{k}})}$.~Therefore, we have that $\val_{\F}(g(\beta_{f_{k}})-\beta_{v})=-\sfrac{1}{n}$ and $d(\theta_g)=\sfrac{1}{n}$.
\end{proof}

Next, we compute the twisted gamma factor $\gamma\left(s, \, \Phi_{(f_{k}, \, \chi_k, \, \zeta_k)} \otimes\Phi_{(v, \, \varphi, \, \zeta')}, \, \psi_{\F}\right)$.~For the following proposition, let
\begin{equation}\label{D(v)}
D(v):= \left|\, \bigdoublecoset{\W_{\E_v}}{\W_{\F}}{\W_{\E_{f_{k}}}} \,\right|
\end{equation}
where $v\in\mathcal{O}_{\F}^{\times}$.

\begin{proposition}\label{GaloisPropositionMain}
Let $\Phi_{(f_{k}, \, \chi_k, \, \zeta_k)}$ be the Langlands parameter of $\pi_{(f_{k}, \, \chi_k, \, \zeta_k)}$ and $\Phi_{(v, \, \varphi, \, \zeta')}$ the Langlands parameter of $\pi_{(v, \, \varphi, \, \zeta')}$.~Then
\begin{align}
&\gamma(s, \, \Phi_{(f_{k}, \, \chi_k, \, \zeta_k)} \otimes\Phi_{(v, \, \varphi, \, \zeta')}, \, \psi_{\F})
\notag \\
&= (-1)^{n-D(v)}B_{f_{k}}^{n}\, \xi\left(\frac{\sigma_{f_{k}}}{\sigma_{f_{k}}-v^{-1}}\right) \varkappa_{\E_v/\F}
\left(\frac{(-1)^{kn}\varpi_{\F}^{k}}{f_{k}(v^{-1})}\right) \omega_{(v, \, \varphi, \, \zeta')} \left(\frac{(-1)^{kn}\varpi_{\F}^{k}}{f_{k}(v^{-1})}\right) q_{\F}^{kn(1/2-s)}.
\label{simpleformula}
\end{align}
\end{proposition}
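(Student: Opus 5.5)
We apply Lemma~\ref{LemmaGalois} with $\chi=\xi$ on $\E=\E_{f_k}$ and $\eta$ on $\L=\E_v$. This writes the gamma factor as a product over the $D(v)$ double cosets $g\in\W_{\E_v}\backslash\W_{\F}/\W_{\E_{f_k}}$ of
\[
\lambda_{\K_g/\F}(\psi_{\F})\,\gamma(s,\theta_g,\psi_{\K_g}).
\]
We then evaluate each factor with the tools already in place, and finally reassemble the product into norm expressions from $\E_{f_k}$ and $\E_v$ to $\F$.

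\textbf{Langlands constants.} By \eqref{LanglandsConstantTower} applied to $\F\subset g(\E_{f_k})\subset\K_g$, and \eqref{LanglandsConstantUnramified} (since $\K_g/g(\E_{f_k})$ is unramified of degree $m_g$ by \eqref{Diagram}), we get
\[
\lambda_{\K_g/\F}(\psi_{\F})=(-1)^{m_g-1}\lambda_{\E_{f_k}/\F}(\psi_{\F})^{m_g}.
\]
Since $\sum_g m_g\cdot kn=\dim(\Phi_{f_k}\otimes\Phi_v)=kn^2$, we have $\sum_g m_g=n$. The product of Langlands constants is therefore $(-1)^{n-D(v)}\lambda_{\E_{f_k}/\F}(\psi_{\F})^n$, which accounts for the sign and the $\lambda$-part of $B_{f_k}^n$.

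\textbf{Each $\gamma(s,\theta_g,\psi_{\K_g})$.} By Proposition~\ref{RepresentingElement}, $\theta_g$ has depth $1/n$ and $e(\K_g|\F)d(\theta_g)=1$ is odd. Proposition~\ref{PropositionGalois} then gives
\[
\gamma(s,\theta_g,\psi_{\K_g})=\theta_g(c_g)^{-1}\psi_{\K_g}(c_g)\,|c_g|^{kn m_g(1/2-s)}, \qquad c_g=g(\beta_{f_k})-\beta_v .
\]
The absolute values multiply to $q_{\F}^{kn(1/2-s)}$.

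\textbf{The additive characters.} Over all $g$, the values $\psi_{\K_g}(c_g)$ multiply to $\psi_{\F}$ of
\[
\sum_g \tr_{\K_g/\F}(g\beta_{f_k})-\tr_{\K_g/\F}(\beta_v)=n\tr_{\E_{f_k}/\F}(\beta_{f_k})-kn\tr_{\E_v/\F}(\beta_v)=0 .
\]
This uses $\sum_g\tr_{\K_g/\F}\circ g=n\tr_{\E_{f_k}/\F}$, which follows from the double coset decomposition of $\Ind\otimes\Ind$.

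\textbf{The multiplicative characters (main obstacle).} Here we must compute
\[
\prod_g\theta_g(c_g)^{-1}=\prod_g{}^g\xi\bigl(N_{\K_g/g\E_{f_k}}(c_g)\bigr)^{-1}\,\eta\bigl(N_{\K_g/\E_v}(c_g)\bigr)^{-1}.
\]
The plan is to factor $c_g=g(\beta_{f_k})(1+u_g)$ in the $\xi$-part and $c_g=-\beta_v(1+u_g^{-1})$ in the $\eta$-part. Using the identity $\prod_g N_{\K_g/g\E_{f_k}}(x-g\alpha)=m_{\alpha}(x)$-type relations, the relevant norms become:
\[
\prod_g N_{\K_g/\E_v}\bigl(g\beta_{f_k}-\beta_v\bigr)=\pm m_{\beta_{f_k}}(\beta_v),
\]
and symmetrically $\prod_g N_{\K_g/g\E_{f_k}}(\cdots)$ corresponds to $\pm m_{\beta_v}(\beta_{f_k})=\pm(\beta_{f_k}^n-(v\varpi_{\F})^{-1})$. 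Substituting $\beta_v^n=(v\varpi_{\F})^{-1}$ into \eqref{MinimalPolynomialMiddle} gives
\[
m_{\beta_{f_k}}(\beta_v)=\varpi_{\F}^{-k}f_k(v^{-1})\in\F ,
\]
so its contribution is $\eta|_{\F^\times}=\omega_{(v,\varphi,\zeta')}\varkappa_{\E_v/\F}^{-1}$ by \eqref{detidentity}. On the other side,
\[
\beta_{f_k}^n-(v\varpi_{\F})^{-1}=\beta_{f_k}^{n}\cdot\frac{\sigma_{f_k}-v^{-1}}{\sigma_{f_k}} ,
\]
whose $\xi$-value produces $\xi(\beta_{f_k})^{-n}\,\xi\bigl(\sigma_{f_k}/(\sigma_{f_k}-v^{-1})\bigr)$, i.e.\ the rest of $B_{f_k}^n$ and the $\xi$-factor in \eqref{simpleformula}.

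The delicate points are twofold. The first is justifying that each character only needs to be evaluated on the full norm product. Because $\theta_g$ is a product of two characters, this requires splitting $\theta_g(c_g)$ as ${}^g\xi(c_g)\eta(c_g)$ and regrouping each across all $g$ via transitivity of norms to $\E_{f_k}$, respectively $\E_v$. The second is tracking the signs $(-1)^{kn}$ and the $\varkappa$ inversion so that they land exactly as in \eqref{simpleformula}; this step is where I expect the bookkeeping to be hardest. The sign $(-1)^{kn}$ comes from $N(-\beta_v)$ and the convention $m(x)=\prod(x-\alpha)$ versus $\prod(\alpha-x)$. Because $\varkappa_{\E_v/\F}$ has order dividing two, $\varkappa^{-1}=\varkappa$, so we may write the result in the stated form.
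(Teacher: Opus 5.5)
Your proposal is correct and follows essentially the same route as the paper. The paper likewise applies Lemma~\ref{LemmaGalois}, evaluates each $\gamma(s,\theta_g,\psi_{\K_g})$ by Proposition~\ref{PropositionGalois} with vanishing trace, uses the tower and unramified formulas for the Langlands constants together with $\sum_g m_g=n$, identifies the combined norms with $m_{\beta_v}(\beta_{f_k})$ and $(-1)^{kn}m_{\beta_{f_k}}(\beta_v)$, and finishes with \eqref{detidentity} and $\varkappa_{\E_v/\F}^2=1$.

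One imprecision: the regrouping on the $\xi$-side is not ``transitivity of norms.'' It works by writing ${}^g\xi=\xi\circ g^{-1}$ and noting that the elements $g^{-1}\sigma(\beta_v)$, for $g$ over the double coset representatives and $\sigma\in\Gal(\K_g/g(\E_{f_k}))$, run over all conjugates of $\beta_v$ exactly once. This also shows the sign on that side is $+1$.
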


\begin{proof}
From Lemma~\ref{LemmaGalois} and Propositions~\ref{RepresentingElement} and~\ref{PropositionGalois}, we have that
\begin{equation}\label{GammaProduct}
\gamma\left(s, \, \Phi_{(f_{k}, \, \chi_k, \, \zeta_k)}
\otimes\Phi_{(v, \, \varphi, \, \zeta')}, \, \psi_{\F}\right) = \prod_{\smalldoublecoset{\W_{\E_v}}{\W_{\F}}{\W_{\E_{f_{k}}}}}
\lambda_{\K_{g_{i}}/\F}(\psi_{\F})
\gamma\left(s, \, \theta_{g_{i}}, \, \psi_{\K_{g_{i}}}\right)
\end{equation}
where $\{g_{i}\}$ is a set of representatives of the double coset space $\W_{\E_v}\backslash\W_{\F}/\W_{\E_{f_{k}}}$.~For each $\gamma\left(s,\,\theta_{g_i},\,\psi_{\K_{g_i}}\right)$,
Proposition~\ref{PropositionGalois} shows that the factor is of the form
\begin{equation}\label{GammaFactorTerm}
    \theta_{g_i}^{-1}\left(c_{\theta_{g_i}}\right)
    \left|c_{\theta_{g_i}}\right|^{[\K_{g_i}:\F](1/2-s)}.
\end{equation}
Indeed, Theorem~\ref{Abhyankar} gives
$e(\K_{g_i}\mid\F)d(\theta_{g_i})=1$.
Moreover, since $c_{\theta_{g_i}}=g_i(\beta_{f_k})-\beta_v$,
the tower law for trace \cite[Proposition~5.48, p.~84]{MilneFT} gives
\begin{align*}
    \tr_{\K_{g_i}/\F}\left(c_{\theta_{g_i}}\right)
    &=m_{g_i}\tr_{g_i(\E_{f_k})/\F}
        \left(g_i(\beta_{f_k})\right)
      -km_{g_i}\tr_{\E_v/\F}(\beta_v)\\
    &=m_{g_i}\tr_{\E_{f_k}/\F}(\beta_{f_k})
      -km_{g_i}\tr_{\E_v/\F}(\beta_v)\\
    &=0.
\end{align*}

Recall the notation introduced in \eqref{CompositumDegree}.~Simplifying \eqref{GammaFactorTerm} further, we note that
\begin{equation}\label{AbsoluteValueTerm}
\left|c_{\theta_{g_{i}}}\right|^{[\K_{g_{i}} \, : \, \F](1/2-s)} = q_{\F}^{km_{g_{i}}(1/2-s)}
\end{equation}
and
\begin{equation}\label{CharacterTerm}
\theta_{g_{i}}^{-1}\left(c_{\theta_{g_{i}}}\right) =
\left({}^{g_{i}}\xi\right)^{-1}\left(N_{\K_{g_{i}}/g_{i}(\E_{f_{k}})} \left(c_{\theta_{g_{i}}}\right) \right) \eta^{-1}\left(N_{\K_{g_{i}}/\E_v}\left(c_{\theta_{g_{i}}}\right)\right).
\end{equation}
Furthermore, we have from comparing dimensions in Lemma~\ref{LemmaGalois} that
\[
\sum_i knm_{g_{i}}=kn\cdot n=kn^2.
\]
Dividing by $kn$, we get
\begin{equation}\label{DimensionIdentity}
\sum_i m_{g_{i}}=n.
\end{equation}
Equations \eqref{AbsoluteValueTerm} and \eqref{CharacterTerm} enable us to rewrite \eqref{GammaProduct} as
\begin{equation}\label{GammaNormProduct}
\begin{aligned}
& q_{\F}^{kn(1/2-s)} \prod_{i}\lambda_{\K_{g_{i}}/\F}(\psi_{\F})
\left({}^{g_{i}}\xi\right)^{-1}\left(N_{\K_{g_{i}}/g_{i}(\E_{f_{k}})}
\left(c_{\theta_{g_{i}}}\right)\right)\eta^{-1}\left(N_{\K_{g_{i}}/\E_v}\left(c_{\theta_{g_{i}}}\right)\right)\\
&=
q_{\F}^{kn(1/2-s)}\left(\prod_{i}\lambda_{\K_{g_{i}}/\F}(\psi_{\F})\right)\xi^{-1}\left(\prod_{i} g_{i}^{-1}\left(N_{\K_{g_{i}}/g_{i}(\E_{f_{k}})}\left(c_{\theta_{g_{i}}}\right)\right)\right)\\
& \cdot \eta^{-1}\left(\prod_{i} N_{\K_{g_{i}}/\E_v}\left(c_{\theta_{g_{i}}}\right)\right).
\end{aligned}
\end{equation}

By Theorem~\ref{Abhyankar}, $\K_{g_{i}}$ is finite and unramified over both $g_{i}(\E_{f_{k}})$ and $\E_v$; hence the extensions $\K_{g_{i}}/g_{i}(\E_{f_{k}})$ and $\K_{g_{i}}/\E_v$ are Galois.~Using \cite[Subsection 2.3, p.~11]{Adrian-Stevens} and \eqref{DimensionIdentity}, we can simplify \eqref{GammaNormProduct} as
\begin{equation}\label{GammaConjugateProduct}
\begin{aligned}
&q_{\F}^{kn(1/2-s)} \left(\prod_{i}\lambda_{\K_{g_{i}}/\F}(\psi_{\F})\right) \xi^{-1}\left(\prod_{i}
\prod_{\sigma\in\Gal(\K_{g_{i}}/g_{i}(\E_{f_{k}}))}\left(\beta_{f_{k}}-g_{i}^{-1}(\sigma(\beta_{v}))\right)\right)\\
&\cdot\eta^{-1}\left((-1)^{kn}\prod_{i}\prod_{\tau\in\Gal(\K_{g_{i}}/\E_v)}\left(\beta_{v}-\tau(g_{i}(\beta_{f_{k}}))\right)\right)\\
&=
q_{\F}^{kn(1/2-s)}\left(\prod_{i}\lambda_{\K_{g_{i}}/\F}(\psi_{\F})\right)\xi^{-1}\left(m_{\beta_{v}}(\beta_{f_{k}})\right)
\eta^{-1}\left((-1)^{kn}m_{\beta_{f_{k}}}(\beta_{v})\right)
\end{aligned}
\end{equation}
where $\Gal$ denotes the Galois group.~Using \eqref{LanglandsConstantTower}, \eqref{LanglandsConstantUnramified}, and \eqref{D(v)}, we can further rewrite \eqref{GammaConjugateProduct} as
\begin{equation}\label{GammaLanglandsProduct}
\begin{aligned}
&(-1)^{n-D(v)}q_{\F}^{kn(1/2-s)}\left(\prod_{i}\lambda_{g_{i}(\E_{f_{k}})/\F}(\psi_{\F})^{m_{g_{i}}}\right)\\
& \cdot \xi^{-1}\left(m_{\beta_{v}}(\beta_{f_{k}})\right) \eta^{-1}\left((-1)^{kn}m_{\beta_{f_{k}}}(\beta_{v})\right).
\end{aligned}
\end{equation}
From \cite[Subsection 28.7, Remark]{BH-7} and \eqref{LanglandsConstant}, we have that
\begin{equation}\label{LanglandsConstantConjugate}
\lambda_{g_{i}(\E_{f_{k}})/\F}(\psi_{\F})
=
\lambda_{\E_{f_{k}}/\F}(\psi_{\F}).
\end{equation}

Next, we have from \eqref{MinimalPolynomialSimple} and \eqref{MinimalPolynomialMiddle} that
\begin{equation}\label{PolynomialEvaluationSimple}
\begin{aligned}
m_{\beta_{v}}(\beta_{f_{k}})^{-1} &= \left(\beta_{f_{k}}^{n}-(v\varpi_{\F})^{-1}\right)^{-1} \\ &= \frac{\varpi_{\F}}{\sigma_{f_{k}}-v^{-1}} \\ &=
\beta_{f_{k}}^{-n} \frac{\sigma_{f_{k}}}{\sigma_{f_{k}}-v^{-1}}
\end{aligned}
\end{equation}
and
\begin{equation}\label{PolynomialEvaluationMiddle}
\begin{aligned}
m_{\beta_{f_{k}}}(\beta_{v})^{-1} &= \frac{\varpi_{\F}^{k}}
{v^{-k}-a_{k-1}v^{-(k-1)}-\cdots-a_1v^{-1}-a_0} \\ &= \frac{\varpi_{\F}^{k}}{f_{k}(v^{-1})}.
\end{aligned}
\end{equation}
Using \eqref{DimensionIdentity}, \eqref{LanglandsConstantConjugate}, \eqref{PolynomialEvaluationSimple}, \eqref{PolynomialEvaluationMiddle}, and the definition of $B_{f_{k}}$, we may simplify \eqref{GammaLanglandsProduct} as
\begin{align*}
&(-1)^{n-D(v)}q_{\F}^{kn(1/2-s)} \lambda_{\E_{f_{k}}/\F}(\psi_{\F})^n
\,\xi\left(\beta_{f_{k}}^{-n}\frac{\sigma_{f_{k}}}{\sigma_{f_{k}}-v^{-1}}\right)\eta\left(\frac{(-1)^{kn}\varpi_{\F}^{k}}{f_{k}(v^{-1})}\right)\\
& = (-1)^{n-D(v)}q_{\F}^{kn(1/2-s)}B_{f_{k}}^n \,\xi\left(\frac{\sigma_{f_{k}}}{\sigma_{f_{k}}-v^{-1}}\right)\eta\left(\frac{(-1)^{kn}\varpi_{\F}^{k}}{f_{k}(v^{-1})}\right).
\end{align*}
Lastly, \cite[Proposition 29.2]{BH-7} and \cite[p.~2]{Harris} tell us that
\[
\eta\left(\frac{(-1)^{kn}\varpi_{\F}^{k}}{f_{k}(v^{-1})}\right) =\varkappa_{\E_v/\F}\left(\frac{(-1)^{kn}\varpi_{\F}^{k}}{f_{k}(v^{-1})}\right)\omega_{(v, \, \varphi, \, \zeta')}\left(\frac{(-1)^{kn}\varpi_{\F}^{k}}{f_{k}(v^{-1})}\right)
\]
which completes the proof of Proposition~\ref{GaloisPropositionMain}.
\end{proof}

\section{Proof of Main Results}\label{MainProof}
In this section, we prove Theorems \ref{MainResult1} and \ref{MainResult2}.

\subsection{Distinguishing two $k$-middle supercuspidal representations} We first prove Theorem~\ref{MainResult1}.~Let $\pi_i=\pi_{(f_{k,i}, \, \chi_{k,i}, \, \zeta_{k,i})}$ and $\Phi_i=\Ind_{\E_{f_{k,i}}/\F}\xi_i$.
\begin{proof}[Proof of Theorem \ref{MainResult1}]
Let $\pi_1$ and $\pi_2$ be $k$-middle supercuspidal representations with the same central character and equal twisted gamma factors for all the twists in the hypotheses.~Since the local Langlands correspondence preserves these twisted gamma factors \cite{Harris}, we work with the corresponding Langlands parameters $\Phi_1$ and $\Phi_2$.~It suffices to show that their corresponding admissible pairs are conjugate.

Taking $\nu$ to be the trivial quasi-character in Proposition~\ref{PropositionGaloisTamelyRamified}, we obtain
\begin{equation}\label{SameB}
B_{f_{k,1}}=B_{f_{k,2}}.
\end{equation}
Comparing the same formula for an arbitrary tamely ramified quasi-character $\nu$ of $\F^\times$ then gives
\[
\nu\left(\frac{a_{0,2}}{a_{0,1}}\right)=1.
\]
Since this holds for every such $\nu$, we have $\frac{a_{0,2}}{a_{0,1}} \in 1+\mathcal{P}_{\F}$, and hence
\begin{equation}\label{SameConstantTerms}
\bar{a}_{0,1}=\bar{a}_{0,2}.
\end{equation}
    
Fix $v\in\mathcal{O}_{\F}^{\times}$.~For every character $\varphi$ of $k_{\F}^{\times}$, inflated to $\mathcal{O}_{\F}^{\times}$, we have
\begin{equation}\label{fStep1}
    \frac{\gamma(s, \, \Phi_{(f_{k,1}, \, \chi_{k,1}, \, \zeta_{k,1})}
\otimes\Phi_{(v, \, \varphi, \, \zeta')}, \, \psi_{\F})}{\gamma\left(s, \, \Phi_{(f_{k,1}, \, \chi_{k,1}, \, \zeta_{k,1})}
\otimes\Phi_{\left(v, \, \mathbbm{1}_{\mathcal{O}_{\F}^{\times}}, \, 1\right)}, \, \psi_{\F}\right)} =  \frac{\gamma(s, \, \Phi_{(f_{k,2}, \, \chi_{k,2}, \, \zeta_{k,2})}
\otimes\Phi_{(v, \, \varphi, \, \zeta')}, \, \psi_{\F})}{\gamma\left(s, \, \Phi_{(f_{k,2}, \, \chi_{k,2}, \, \zeta_{k,2})}
\otimes\Phi_{\left(v, \, \mathbbm{1}_{\mathcal{O}_{\F}^{\times}}, \, 1\right)}, \, \psi_{\F}\right)}
\end{equation}
where $\mathbbm{1}_{\mathcal{O}_{\F}^{\times}}$ is the trivial quasi-character of $\mathcal{O}_{\F}^{\times}$.~From Proposition \ref{GaloisPropositionMain} and \eqref{SameB}, we have that \eqref{fStep1} reduces to
\begin{equation}\label{fStep2}
    \varphi\left(\frac{f_{k,1}(v^{-1})}{f_{k,2}(v^{-1})}\right) = 1.
\end{equation}

Since \eqref{fStep2} holds for every such $\varphi$, we have
\[
\bar{f}_{k,1}(\bar{v}^{-1})
=
\bar{f}_{k,2}(\bar{v}^{-1})
\]
for any $v\in\mathcal{O}_{\F}^{\times}$.~Our assumption $q_{\F}>(k-1)^2$ implies that
\[
q_{\F}-1\geq(k-1)^2\geq k-1.
\]
We may therefore choose
$v_1,\ldots,v_{k-1}\in\mathcal{O}_{\F}^{\times}$
with distinct residues.~Let $w_r:=\bar{v}_r^{-1}$.~For each $r$, expanding
$\bar{f}_{k,1}(w_r)=\bar{f}_{k,2}(w_r)$ gives
\[
w_r^k-\sum_{j=1}^{k-1}\bar{a}_{j,1}w_r^j-\bar{a}_{0,1}
=
w_r^k-\sum_{j=1}^{k-1}\bar{a}_{j,2}w_r^j-\bar{a}_{0,2}.
\]
From \eqref{SameConstantTerms}, we have $\bar{a}_{0,1}=\bar{a}_{0,2}$.~Canceling these terms along with $w_r^k$ and dividing by $-w_r$ yields
\[
\sum_{j=1}^{k-1}\bar{a}_{j,1}w_r^{j-1}
=
\sum_{j=1}^{k-1}\bar{a}_{j,2}w_r^{j-1}.
\]

To illustrate how we solve this system, consider $k=4$.~The equations are
\[
\bar{a}_{3,1}w_r^2+\bar{a}_{2,1}w_r+\bar{a}_{1,1} = \bar{a}_{3,2}w_r^2+\bar{a}_{2,2}w_r+\bar{a}_{1,2}, \, \,  r=1,2,3.
\]
Subtracting the equation for $w_1$ from that for $w_2$ and dividing by $w_2-w_1$ gives
\[
\bar{a}_{3,1}(w_2+w_1)+\bar{a}_{2,1} = \bar{a}_{3,2}(w_2+w_1)+\bar{a}_{2,2}.
\]
Likewise, subtracting the equation for $w_1$ from that for $w_3$ and dividing by $w_3-w_1$ gives
\[
\bar{a}_{3,1}(w_3+w_1)+\bar{a}_{2,1} = \bar{a}_{3,2}(w_3+w_1)+\bar{a}_{2,2}.
\]
Subtracting these two equations, we obtain
\[
\bar{a}_{3,1}(w_3-w_2) = \bar{a}_{3,2}(w_3-w_2).
\]
Since $w_3\neq w_2$, we conclude that $\bar{a}_{3,1}=\bar{a}_{3,2}$.~Substituting back into either preceding equation gives $\bar{a}_{2,1}=\bar{a}_{2,2}$, and substituting into an original equation then gives $\bar{a}_{1,1}=\bar{a}_{1,2}$.

The same argument applies for general $k$.~The identity
\[
\frac{w^m-z^m}{w-z} = w^{m-1}+w^{m-2}z+\cdots+z^{m-1}
\]
shows that, with $z$ fixed, subtraction followed by division removes the constant term and replaces each monomial of degree $m$ by a monic polynomial of degree $m-1$.~Thus, starting with $k-1$ equations of degree at most $k-2$, each step leaves one fewer equation and reduces the degree bound by one.~All divisions are valid because the $w_r$ are distinct.~After $k-2$ steps, the remaining equation is
\[
\bar{a}_{k-1,1}=\bar{a}_{k-1,2}.
\]
Substituting back successively gives
\[
\bar{a}_{j,1}=\bar{a}_{j,2}, \, \,  1\leq j\leq k-1.
\]
Together with the equality of constant terms, this proves that
\[
\bar{f}_{k,1}=\bar{f}_{k,2}.
\]

By Proposition~\ref{Bijection}, we may realize both representations using a common lift $f_{k}$ of $\bar{f}_{k,1}=\bar{f}_{k,2}$.~Hence,
\[
\E_{f_{k, 1}}=\E_{f_{k, 2}}=\E_{f_{k}}, 
\]
\[
\beta_{f_{k, 1}}=\beta_{f_{k, 2}}=\beta_{f_{k}},
\]
and
\[
\sigma_{f_{k, 1}}=\sigma_{f_{k, 2}}=\sigma_{f_{k}}.
\]
The equality of the $B_{f_{k,i}}$ now gives $\xi_1(\beta_{f_{k}})=\xi_2(\beta_{f_{k}})$.~Set $\delta = \xi_1\xi_2^{-1}$.~Proposition~\ref{1units} and the equal-central-character condition \eqref{detidentity} give
\[
\delta \, \big|_{\, 1+\mathcal P_{\E_{f_{k}}}}=1,\, \, \delta(\beta_{f_{k}}) = 1, \, \, 
\delta \, \big|_{\, \F^{\times}}=1.
\]
Since $\sigma_{f_{k}}=\beta_{f_{k}}^n\varpi_{\F}$, we also have $\delta(\sigma_{f_{k}})=1$.

Using the equality of twisted gamma factors for twists by simple supercuspidal representations of $\GL(n, \, \F)$ and canceling the remaining common factors in \eqref{simpleformula}, we obtain
\[
\delta(\sigma_{f_{k}}-v^{-1})=1.
\]
Thus, the character $\bar\delta$ induced on $\k_{\L_{f_{k}}}^{\times}$ satisfies
\begin{equation}\label{linevalues}
\bar\delta(\bar\sigma_{f_{k}}-t)=1
\end{equation}
for all $t \in \k_{\F}$.

If $\bar\delta$ were non-trivial, \eqref{linevalues} and Theorem~\ref{KatzEstimate} would imply
\[
q_{\F}\leq(k-1)\sqrt{q_{\F}},
\]
contrary to our assumption $q_{\F}>(k-1)^2$.~Hence $\bar\delta=1$.~Since
\[
\E_{f_{k}}^\times=\langle\beta_{f_{k}}\rangle\times\mu'_{\L_{f_{k}}}\times(1+\mathcal P_{\E_{f_{k}}}),
\]
we have $\delta=1$.~Thus, $\xi_1=\xi_2$, so $\Phi_1\cong\Phi_2$.~The local Langlands correspondence gives $\pi_1\cong\pi_2$, proving Theorem~\ref{MainResult1}.
\end{proof}

\subsection{Alternate supercuspidal representations of depth $\frac{1}{n}$}

In this subsection, we use twisted gamma factors to distinguish $k$-middle supercuspidals from other depth $\frac{1}{n}$ supercuspidal representations of $\GL(kn, \, \F)$.~We use the conductor formulas in \cite{BHK} and the notion of \textit{completely distinct} supercuspidal representations, as in \cite[Section 4]{Luo}.

Recall from Subsection~\ref{MaximalSimpleTypes} that $\varphi_\rho$ denotes the minimal polynomial of a simple stratum associated with a supercuspidal representation $\rho$ and is an invariant of $\rho$.~Furthermore, we use the local Langlands correspondence, together with Theorem~\ref{TheoremGalois}, to pass between a supercuspidal representation $\rho$ and its corresponding admissible pair, considered up to $\F$-isomorphism.

\begin{definition}
Let $\pi_1$ and $\pi_2$ be positive-depth supercuspidal representations of $\GL(N_{1}, \, \F)$ and $\GL(N_{2}, \, \F)$, respectively.~We say that the $\pi_i$ are \textit{completely distinct} if either $d(\pi_1)\ne d(\pi_2)$ or $d(\pi_1)=d(\pi_2)$ but $\varphi_{\pi_1}\ne\varphi_{\pi_2}$.
\end{definition}

We will use the following consequence of \cite[Theorem 6.5(ii),(iii)]{BHK}, also used in \cite[Proposition 4.3]{Luo}.~If $\pi_{1}$ and $\pi_{2}$ are supercuspidal representations of $\GL(N_{1}, \, \F)$ and $\GL(N_{2}, \, \F)$ with $N_{1}\ne N_{2}$ and the same positive depth $a$, then
\begin{equation}\label{BHKinequality}
f(\pi_{1}\times\pi_{2}^\vee) \leq N_{1}N_{2}(1+a),
\end{equation}
where $\pi_{2}^\vee$ denotes the smooth contragredient of $\pi_{2}$.~Equality holds when $\pi_{1}$ and $\pi_{2}$ are completely distinct, and the inequality is strict when they are not completely distinct.

\begin{lemma}\label{degreelemma}
Let $\rho$ be a supercuspidal representation of $\GL(kn, \, \F)$ with depth $\frac{1}{n}$ which is not a $k$-middle supercuspidal.~Then $d := \deg\varphi_\rho$ is a positive proper divisor of $k$.
\end{lemma}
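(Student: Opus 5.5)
We will argue via the simple stratum attached to $\rho$. Write $\rho\cong\ind_{\textbf{J}}^{\GL(kn,\F)}\Lambda$ with simple stratum $[\mathfrak{A},m,0,\beta]$, $\E=\F[\beta]$, $e=e(\E|\F)$, $f=f(\E|\F)$. The depth condition gives $\frac{m}{e}=\frac1n$. Since $p\nmid kn$, the extension $\E/\F$ is tame. Moreover, $\beta$ has valuation $-\frac1n$ and $\frac1n=\frac{m}{e}$ is in lowest form only when $m=1$, $e=n$ up to the factor $\gcd$. We will first show that the stratum is minimal with $m=1$ and $e(\E|\F)=n$.

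Indeed, $\val_{\F}(\beta)=-\frac{m}{e(\mathfrak A)}=-\frac1n$ forces $n\mid e(\E|\F)$. A depth-$\frac1n$ stratum has $m$ minimal in its class, so the defining sequence has length one and $\beta$ is minimal over $\F$. For minimal $\beta$, we have $\gcd(\val_{\E}(\beta),e(\E|\F))=1$, hence $e(\E|\F)=n$. Writing $[\E:\F]\cdot r=kn$ with $r=\frac{kn}{[\E:\F]}$, we get $f\cdot r=k$.

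Next we compute $\varphi_\rho$. Here $g=\gcd(e,m)=1$ and $Y_\beta=\beta^{n}\varpi_{\F}$, which is a unit in $\E$ generating the residue field $\k_\E$ over $\k_\F$. Minimality of $\beta$ means the reduction of $\beta^{n}\varpi_\F$ generates $\k_\E$. So the characteristic polynomial of $Y_\beta$ on $V\cong\E^{r}$ reduces to a power of the minimal polynomial of $\overline{Y_\beta}$ over $\k_\F$. That minimal polynomial has degree $f$, so $d=\deg\varphi_\rho=f$, and $f$ divides $k$ because $fr=k$. Also $d\geq1$, since $\varphi_\rho\neq X$ and $\varphi_\rho$ is monic irreducible.

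It remains to rule out $d=k$. If $f=k$, then $r=1$, so $\E$ is a degree-$kn$ extension with $e=n$ and $f=k$, $\mathfrak A$ is the unique hereditary order normalized by $\E^\times$ with period $n$, and $[\mathfrak A,1,0,\beta]$ is a minimal simple stratum. The plan is to show that such a $\rho$ is $k$-middle. Let $\bar f_k=\varphi_\rho$ and choose a lift $f_k$. We will conjugate so that $\mathfrak A=\mathfrak A_{kn}$, and show that $[\mathfrak A,1,0,\beta]$ is equivalent to $[\mathfrak A_{kn},1,0,\beta_{f_k}]$. Equivalence depends only on $\beta$ modulo $\mathfrak P^{0}$, that is, on the class of $\beta$ in $\mathfrak P^{-1}/\mathfrak A$. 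This class is determined up to conjugation by $U(\mathfrak A)$ through the characteristic polynomial datum; this is the content of \cite[Chapter 2]{BK}. Then $J^1=H^1=U^1$, the simple character is $\psi_{\beta_{f_k}}$, and $\Lambda$ is one of the characters $\Lambda_{(f_k,\chi_k,\zeta_k)}$, so $\rho$ is $k$-middle, a contradiction. Hence $d$ is a proper divisor of $k$.

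The main obstacle is this last conjugacy step: showing that every minimal stratum of period $n$ and level $1$ with characteristic polynomial $\bar f_k^{\,n}$ is $U(\mathfrak A)$-conjugate to the one built from $\beta_{f_k}$. I would first try the intertwining criterion: two simple strata with the same $\mathfrak A$ and the same characteristic polynomial intertwine, so by \cite[Theorem 3.5.11]{BK} the associated simple characters are conjugate. An alternative is to use the tame parametrization, which identifies both endo-classes with $\E_{f_k}$ and elements of valuation $-\frac1n$ whose $n$-th power times $\varpi_\F$ reduces to a root of $\bar f_k$.
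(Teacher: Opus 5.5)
Your argument has a genuine gap in the second paragraph. The claim that a depth-$\frac1n$ simple stratum must have $m=1$ and $\beta$ minimal over $\F$ is false. Maximality of the type gives $e(\mathfrak A)=e(\E|\F)$, so the depth condition only says $e(\E|\F)=mn$ for some $m\geq1$.

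Non-minimal $\beta$ do occur in $\GL(kn,\F)$ for every $k\geq2$. Take $\gamma$ minimal with $\F[\gamma]/\F$ totally ramified of degree $n$. Take $c$ minimal over $\F[\gamma]$ with $\val_{\F}(c)=-\frac{1}{kn}$, generating a totally ramified extension of $\F[\gamma]$ of degree $k$. Refining as in \cite[Chapter 2]{BK}, $\beta=\gamma+c$ gives a simple stratum $[\mathfrak A,k,0,\beta]$ in $\Mat(kn\times kn,\F)$. It has $\mathfrak B=\mathcal O_{\E}$ maximal and depth $\frac{k}{kn}=\frac1n$, yet $k_0(\beta,\mathfrak A)=-1>-k=\nu_{\mathfrak A}(\beta)$, so $\beta$ is not minimal.

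The same false minimality breaks your identity $d=f$. Suppose $c$ is chosen so that $\F[\gamma][c]/\F[\gamma]$ has residue degree $f>1$ (possible, e.g., for $k=4$). Then $\overline{\beta^n\varpi_{\F}}=\overline{\gamma^n\varpi_{\F}}\in\k_{\F}$, so $d=1<f$.

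The fix is the paper's argument: keep $m$ arbitrary throughout.
\begin{enumerate}
\item Since $\gcd(e(\mathfrak A),m)=m$, you still have $Y_\beta=\beta^n\varpi_{\F}\in\mathcal O_{\E}^\times$.
\item $\varphi_\rho$ is the minimal polynomial of $\overline{Y_\beta}$ over $\k_{\F}$, so $d=[\k_{\F}(\overline{Y_\beta}):\k_{\F}]$ divides $f$. You only get divisibility here, not equality.
\item From $[\E:\F]=mnf\mid kn$ you get $mf\mid k$, hence $d\mid k$.
\item If $d=k$, then $k=d\leq f\leq k/m$ forces $m=1$ and $f=k$.
\item Only at this point is $\beta$ minimal: for $m=1$, the inequalities $\nu_{\mathfrak A}(\beta)\leq k_0(\beta,\mathfrak A)<0$ give $k_0(\beta,\mathfrak A)=\nu_{\mathfrak A}(\beta)$.
\end{enumerate}
From there your last paragraph applies. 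Your conjugacy sketch there goes through the equivalence class $\beta+\mathfrak A$, the intertwining of the level-one simple characters, and the identification of $\Lambda$ with some $\Lambda_{(f_k,\chi_k,\zeta_k)}$. That actually supplies more than the paper does: it simply cites \cite[(1.4.14)--(1.4.15)]{BK} and treats depth $\frac1n$ with $e=n$, $f=k$ as characterizing $k$-middle supercuspidals.
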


\begin{proof}
Choose a maximal simple type associated with a simple stratum $[\mathfrak A,m,0,\beta]$, and put $\E=\F[\beta]$.~The depth condition and maximality give $e\left(\E | \F\right)=mn$.~Furthermore, we have that 
\begin{equation}\label{Tower1Lemma}
[\E : \F] = e\left(\E | \F\right)f\left(\E | \F\right) = mnf\left(\E | \F\right) \mid kn.
\end{equation}

Let
\[
Y_\beta := \beta^n\varpi_{\F}\in\mathcal{O}_{\E}^\times.
\]
Since $d=[\k_{\F}(\overline{Y_\beta}):\k_{\F}]$, the tower law gives
\[
f\left(\E | \F\right) = \left[\k_{\E}:\k_{\F}\left(\overline{Y_\beta}\right)\right]\,d.
\]
Thus, $d\mid f\left(\E | \F\right)$, and \eqref{Tower1Lemma} implies $d\mid k$.

If $d=k$, then $m = 1$ and $f\left(\E | \F\right) = k$.~Hence, the corresponding level-one stratum is minimax, with parameter field of ramification index $n$ and residue degree $k$ \cite[(1.4.14)--(1.4.15)]{BK}.~In this case, $\rho$ would be a $k$-middle supercuspidal representation, contrary to our assumption.~Thus, $d$ is a proper divisor of $k$.
\end{proof}

\begin{proposition}\label{separation}
Let $\pi$ be a $k$-middle supercuspidal and $\rho$ a supercuspidal representation of $\GL(kn, \, \F)$ of depth $\frac{1}{n}$ that is not a $k$-middle supercuspidal.~Then there exists a proper divisor $d$ of $k$ and a $d$-middle supercuspidal $\tau$ of $\GL(dn, \, \F)$ such that
\[
\gamma(s, \, \pi\times\tau^\vee, \, \psi_{\F}) \neq \gamma(s, \, \rho\times\tau^\vee, \, \psi_{\F}).
\]
\end{proposition}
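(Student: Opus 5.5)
We plan to separate $\pi$ from $\rho$ using conductors, via Theorem~\ref{TheoremConductor} and the inequality \eqref{BHKinequality}. Recall that $\gamma(s,\pi_1\times\pi_2,\psi_{\F})=c\,q_{\F}^{-f(\pi_1\times\pi_2,\psi_{\F})s}$ with $f(\pi_1\times\pi_2,\psi_{\F})=f(\pi_1\times\pi_2)-N_1N_2$. Consequently, if two gamma factors agree as functions of $s$, then their conductors agree. It therefore suffices to find $\tau$ with
\[
f(\pi\times\tau^\vee)\neq f(\rho\times\tau^\vee).
\]

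Let $d:=\deg\varphi_\rho$. By Lemma~\ref{degreelemma}, $d$ is a positive proper divisor of $k$. We choose $\tau$ to be a $d$-middle supercuspidal of $\GL(dn,\F)$ whose minimal polynomial is $\varphi_\tau=\varphi_\rho$. For $d=1$ this is the simple supercuspidal $\pi_{(v,\varphi,\zeta)}$ with $\bar v$ chosen so that its minimal polynomial $X-\bar v^{-1}$ (up to the sign conventions of \eqref{MinimalPolynomialSimple}) equals $\varphi_\rho$; this is possible because $\varphi_\rho\neq X$. For $d\ge2$ we take $\bar f_d=\varphi_\rho$, which is monic irreducible of degree $d$, in Proposition~\ref{Bijection}. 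The computation in the proof of Proposition~\ref{Bijection} shows that the minimal polynomial of a $d$-middle is $\bar f_d$. Since $\tau^\vee$ also has depth $\tfrac1n$ and its minimal polynomial is obtained from $\varphi_\tau$ by a fixed sign/inversion rule, we apply that same rule to $\pi$ and $\rho$. So the comparison below really concerns $\varphi_{\tau^\vee}$ versus $\varphi_\pi$ and $\varphi_\rho$ after the same transformation. If necessary, we instead choose $\tau$ so that $\varphi_{\tau^\vee}$ matches $\varphi_\rho$ directly; we will check this convention carefully.

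All three representations $\pi$, $\rho$, $\tau$ have depth $\tfrac1n$, and $dn\neq kn$. We now compare the two pairs.

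First take the pair $(\pi,\tau)$. We have $\varphi_\pi=\bar f_k$, which has degree $k>d=\deg\varphi_\tau$. So $\varphi_\pi\neq\varphi_\tau$, the two are completely distinct, and equality holds in \eqref{BHKinequality}:
\[
f(\pi\times\tau^\vee)=kn\cdot dn\,(1+\tfrac1n).
\]

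Next take the pair $(\rho,\tau)$. By construction $\varphi_\rho=\varphi_\tau$ and the depths agree, so $\rho$ and $\tau$ are not completely distinct. The inequality in \eqref{BHKinequality} is therefore strict:
\[
f(\rho\times\tau^\vee)<kn\cdot dn\,(1+\tfrac1n).
\]
The two conductors differ, hence the two gamma factors differ, as required.

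The main obstacle is the bookkeeping of the minimal polynomial under contragredient, together with the $\psi_{\F}^{-1}$ convention used for simple supercuspidals. Passing to $\tau^\vee$ replaces $\beta$ by $-\beta$, which changes $Y_\beta=\beta^n\varpi_{\F}$ by $(-1)^n$ and so rescales the variable of $\varphi$. We must ensure the matching condition is imposed on the correct polynomial. Beyond that, we must confirm that the "not completely distinct $\Rightarrow$ strict inequality" statement of \cite[Theorem 6.5]{BHK} is stated in terms of $\pi_1$ and $\pi_2$ rather than $\pi_1$ and $\pi_2^\vee$. Our plan is to read off the precise form from \cite[Proposition 4.3]{Luo} and then choose $\tau$ (via $\bar f_d$, or via $\bar v$ when $d=1$) so that the matched polynomials coincide under that form.
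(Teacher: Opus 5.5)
Your proposal is correct and is essentially the paper's proof: take $d=\deg\varphi_\rho$, choose a $d$-middle $\tau$ (a simple supercuspidal when $d=1$) with $\varphi_\tau=\varphi_\rho$, and conclude from \eqref{BHKinequality} that $f(\rho\times\tau^\vee)<kdn^2(1+\frac{1}{n})=f(\pi\times\tau^\vee)$, so the gamma factors differ by Theorem~\ref{TheoremConductor}. Your worry about the contragredient is already settled by the form of \eqref{BHKinequality}: it imposes the complete-distinctness condition on $\pi_1,\pi_2$ themselves while measuring the conductor of $\pi_1\times\pi_2^\vee$, so matching $\varphi_\tau=\varphi_\rho$ directly is the right choice.
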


\begin{proof}
Let $d :=\deg\varphi_\rho$.~By Lemma~\ref{degreelemma}, $d$ is a positive proper divisor of $k$.~Furthermore, the construction in Subsection~\ref{Middle} enables us to choose a $d$-middle supercuspidal representation $\tau$ of $\GL(dn, \, \F)$ with $\varphi_\tau=\varphi_\rho$ and $d(\tau)=\frac{1}{n}$; for $d=1$, we use the simple supercuspidal construction given in Subsection \ref{SimpleSupercuspidalRepresentations}.

Then $\rho$ and $\tau$ are not completely distinct, whereas $\pi$ and $\tau$ are completely distinct because their minimal polynomials have different degrees.~Hence, \eqref{BHKinequality} gives
\[
f(\rho\times\tau^\vee) < kdn^2\left(1+\frac{1}{n}\right) = f(\pi\times\tau^\vee).
\]
Theorem \ref{TheoremConductor} now implies
\[
\gamma(s, \, \rho\times\tau^\vee, \, \psi_{\F}) \neq \gamma(s, \, \pi \times\tau^\vee, \, \psi_{\F}),
\]
proving the proposition.
\end{proof}

\begin{remark}
\normalfont The contragredient of a $d$-middle supercuspidal is again a $d$-middle supercuspidal \cite[Subsection 3.5]{BH-7}.
\end{remark}

\subsection{Proof of Theorem~\ref{MainResult2}}

\begin{proof}[Proof of Theorem \ref{MainResult2}]
Let $\pi$ and $\pi'$ satisfy the hypotheses of Theorem~\ref{MainResult2}.~Taking the trivial quasi-character as a twist gives equality of the standard gamma factors and hence $d(\pi)=d(\pi')$ \cite[proof of Proposition 2.1(iii)]{Xu}.~Theorem \ref{MainResult2} now follows from Theorem \ref{MainResult1} and Proposition \ref{separation}.
\end{proof}

\end{document}